\documentclass[12pt,reqno]{amsart}
\usepackage[a4paper,top=2.5cm,bottom=2.5cm,left=2.5cm,right=2.5cm]{geometry}
\usepackage{setspace}
\usepackage{graphicx}
\usepackage{amsmath}
\usepackage{amsthm}
\usepackage{mathrsfs}
\usepackage{bbm}
\usepackage{amsfonts}
\usepackage{xcolor}
\colorlet{BLUE}{blue}
\colorlet{RED}{red}
\usepackage{cite}
\usepackage[normalem]{ulem}
\usepackage[colorlinks=true]{hyperref}
\hypersetup{
  linkcolor=blue,
  citecolor=green!40!black,
  urlcolor=green
}
\usepackage{tkz-euclide}  
\usepackage{pgfplots}
\pgfplotsset{compat=1.17}
\usepackage{amssymb}
\usepackage{fancyhdr}
\usepackage{lipsum}
\usepackage{comment}
\usepackage{euscript}

\newtheorem{theorem}{Theorem}[section]
 
\newtheorem{lemma}[theorem]{Lemma} 
\newtheorem{corollary}[theorem]{Corollary}
\newtheorem{example}[theorem]{Example}
\newtheorem{proposition}[theorem]{Proposition}
\newtheorem*{remark}{Remark}

\numberwithin{equation}{section}

\newcommand{\C}{\mathbb C}

\newcommand{\one}{\mathbbm 1}
\newcommand{\norm}[1]{\left\lVert #1\right\rVert}

\newcommand{\Sinf}{\mathbb N_0^{(\mathbb N)}}
\newcommand{\Ginf}{\mathbb Z^{(\mathbb N)}}

\usepackage[normalem]{ulem}
\usepackage{soul}

\begin{document}

\title[$L^p$-Khinchin sequences]{%
$L^p$-Khinchin sequences and transference principles} 

\begin{abstract}
A sequence
\((\lambda_n)_{n\ge1}\) of {distinct positive integers}
is called an \(L^p\)-Khinchin
sequence, where $1\le p\le\infty$, if
$
  n^{-1}\sum_{k=1}^n f(\lambda_k x)
  \longrightarrow
  \int_{\mathbb T}f\,d\mathfrak m
$
 a.e. $x\in\mathbb T$
for every \(f\in L^p(\mathbb T)\). 
When $1\le p<\infty$,   we characterize the \(L^p\)-Khinchin
sequences 
by a weak $(p,p)$ maximal inequality for the associated averaging operators.
We also establish a transference principle reducing the weak $(p,p)$ maximal inequality to weak $(p,p)$ maximal inequalities for  three model systems: translation along a lattice path in the infinite direct sum of the semigroup $\mathbb{N}_0$, translation along a lattice path in the infinite direct sum of the group $\mathbb{Z}$ and translation on the circle $\mathbb{T}$.
As consequences, 
we prove that $(2^n+1)_{n\ge1}$, the Fibonacci sequence, sequences of distinct primes and their partial products are \emph{not} $L^p$-Khinchin for any $1\le p<\infty$.
 We  study the special class of  multiplicative  sequences generated by
infinite words and we show that balanced infinite words generate $L^1$-Khinchin sequences, 
while typical random multiplicative sequences over at least two coprime integers are almost surely not $L^p$-Khinchin for any $1\le p<\infty$.
We also obtain a {Conze-type principle} for countably generated semigroup actions on a probability space.

\end{abstract}

\author{Aihua FAN}
\address{LAMFA, UMR 7352 CNRS, University of Picardie, 33 rue Saint Leu, 80039 Amiens, France}
\email{ai-hua.fan@u-picardie.fr}

\author{Shilei FAN}
\address{School of Mathematics and Statistics, and Key Lab NAA--MOE, Central China Normal University, Wuhan 430079, China}
\email{slfan@ccnu.edu.cn}

\author{Tongjin LI}
\address{School of Mathematics and Statistics \& Hubei Key Laboratory of Mathematical Sciences, Central China Normal University, Wuhan, 430079, China}
\email{litongjin.ccnu@qq.com}

\thanks{The authors were supported by NSFC (grants No. 12331004, No.  12231013 and No. 12571205)}
\makeatletter
\@namedef{subjclassname@2020}{\textup{2020} Mathematics Subject Classification}
\makeatother
\subjclass[2020]{37A30, 28D15,  28D05}
\keywords{Khinchin sequences, maximal inequalities, multiplicative semigroup.}

\maketitle

\tableofcontents

\section{Introduction and main results}
\label{sec:introduction-main-results}

Let \(\Lambda=(\lambda_n)_{n\ge1}\) be a sequence of  distinct positive
integers, which is not assumed to be increasing. 
Let $\mathbb{T}$ be the additive group $\mathbb{R} / \mathbb{Z}$ 
identified with the unit interval $[0,1)$ and
endowed with Lebesgue measure \(\mathfrak m\).  It is well known that 
 for Lebesgue almost every
\(x\in\mathbb T\), the sequence \((\lambda_nx)_{n\ge1}\) is uniformly
distributed modulo one.  Hence, by Weyl's criterion \cite{Weyl1916}, for
every Riemann integrable function \(f\) on \(\mathbb T\),
\begin{equation}\label{KH-property}
\lim_{n\to\infty} \frac1n\sum_{k=1}^n f(\lambda_k x)
=
\int_{\mathbb T} f\,d\mathfrak m
\qquad
\text{for }\mathfrak m\text{-a.e. }x\in\mathbb T .
\end{equation}
Following
\cite{FFQQ}, we say that \(\Lambda\) is an \emph{\(L^p\)-Khinchin sequence},
for some  \(1\le p\le\infty\), if \eqref{KH-property} holds  for every \(f\in L^p(\mathbb T)\).

In 1923, Khinchin \cite{Khintchin}
conjectured that the whole sequence of natural numbers  \((n)_{n\ge1}\) is an
\(L^\infty\)-Khinchin sequence.  But, about fifty years later, Marstrand \cite{Marstrand} disproved this
conjecture by constructing a counter-example $f \in L^\infty(\mathbb{T})$ which is the indicator function of some  Borel set.
Later,  Bourgain \cite{Bourgain1988} and
Quas--Wierdl \cite{Quas_Wierdl2010} gave different proofs of Marstrand's result.  On
the other hand, for every integer \(q\ge2\), the geometric sequence
\((q^n)_{n\ge1}\) is \(L^1\)-Khinchin. This result is due to Raikov
\cite{Raikov}; Riesz \cite{Riesz} pointed out that it follows from
Birkhoff's ergodic theorem.
In \cite{Marstrand}, 
Marstrand also proved that the increasing enumeration of the Furstenberg semigroup
\(\{2^m3^n:m,n\ge0\}\) is \(L^\infty\)-Khinchin, and later Nair \cite{Nair1990}
proved the sequence is in fact \(L^1\)-Khinchin using Bewley's ergodic theorem for
\(\mathbb N_0^d\)-actions.

Finding \(L^p\)-Khinchin sequences among subsequences of
\((2^n)_{n\ge1}\) is closely related to the investigation of universal good or bad sequences in ergodic
theory.  Bourgain's results \cite{B1988,Bourgain1989} imply that
\((2^{n^2})_{n\ge1}\) is \(L^p\)-Khinchin for every \(p>1\), whereas  Buczolich and Mauldin's result  \cite{Buczolich2010} implies that it is not
\(L^1\)-Khinchin.  Let us also point out that a result of Rosenblatt \cite{Rosenblatt1991} implies  that
\((2^{2^n})_{n\ge1}\) is not \(L^\infty\)-Khinchin.

Distinguishing \(L^p\)-Khinchin sequences among subsequences of integers
seems a subtle problem.  In this paper we present some general principles
based on Banach's principle and its variants, and construct some
\(L^p\)-Khinchin sequences and some non-\(L^p\)-Khinchin sequences.  Some open questions asked in \cite{FFQQ} are answered. The
main objects of study are the following operators acting on
\(L^p(\mathbb T)\):
\begin{equation}\label{eq:Lambda_n}
\Lambda_n f(x)
:=
\frac1n\sum_{k=1}^n f(\lambda_k x),
\qquad n\ge1.
\end{equation}
We call them the {\em  Khinchin operators} associated with $\Lambda=(\lambda_n)$. 

In this paper, we concentrate on the study of $L^p$-Khinchin sequences for $1\le p<\infty$. $L^\infty$-Khinchin sequences are studied in \cite{FFL2}. 
The
\(L^\infty\)-case requires a different principle, due to Bellow and Jones, based on the continuity in
measure of the maximal operator. The transference principle in this case takes a different form.

We first observe that when \(1\le p<\infty\), the Banach principle and the Sawyer principle imply that
\(\Lambda\) is \(L^p\)-Khinchin if and only if the Khinchin operators satisfy the following weak \((p,p)\) maximal
inequality:
there exists a constant \(C>0\) such that for every
\(f\in L^p(\mathbb T)\) and every \(t>0\),
\begin{equation}\label{eq:WPMI}
   \mathfrak{m} \left\{ x \in \mathbb{T} :  \Lambda^*f(x) >t \right\} \le \frac{C}{t^p} \|f\|_p^p,
  \end{equation}
where
\(\Lambda^*f:=\sup_{n\ge1}|\Lambda_nf|\)
(see Theorem~\ref{thm:Lp-khinchin-maximal}). 

Secondly, following the idea of Calder\'on's transference principle (cf. \cite{Calderon1968}), we can transfer the  weak \((p,p)\) maximal
inequality \eqref{eq:WPMI} to more practical weak \((p,p)\) maximal
inequalities for simpler models. 
One model is the translation on the semigroup $\mathbb{N}_0^{(\mathbb N)}$ and the other is the  translation on the group $\mathbb{Z}^{(\mathbb N)}$. 
Here $\mathbb{N}_0:=\{0,1,2,\ldots\}$ denotes the additive semigroup of all non-negative integers and   $\mathbb{N}_0^{(\mathbb N)}$ denotes the countable direct sum of  $\mathbb{N}_0$, 
consisting of all sequences $x=(x_n)_{n\ge 1}$ with $x_n \in \mathbb{N}_0$ 
such that $x_n=0$ for all but finitely many $n$. Similarly, $\mathbb{Z}^{(\mathbb N)}$ denotes the countable direct sum of the additive group $\mathbb{Z}$. 
By the prime factorization theorem, 
the (multiplicative) semigroup $(\mathbb N,\cdot)$ is isomorphic to $(\mathbb N_0^{(\mathbb N)},+)$.
Indeed, let
\(\mathfrak p_1<\mathfrak p_2<\cdots\) be the primes. The canonical isomorphism
\(
\Psi: \mathbb N_0^{(\mathbb N)}\longrightarrow  \mathbb N
\) is given by 
\begin{equation}\label{eq:Phi}
\Psi(\mathbf{n})= 
\prod_{j\ge1}\mathfrak p_j^{n_j}
\quad\text{for }\mathbf{n}=(n_1,n_2,\ldots)\in\mathbb{N}_0^{(\mathbb N)}.
\end{equation}
We call the inverse of $\Psi$, denoted $\Phi$, the
{\em lifting map}
from $\mathbb{N}$ to
$\mathbb N_0^{(\mathbb N)}$.

For a given sequence $\Lambda=(\lambda_n)_{n\ge1}$ in $\mathbb N$, we lift it to $\Phi(\Lambda)\subset \mathbb N_0^{(\mathbb N)}$.
It is more practical to geometrically or arithmetically describe the lifted set $\Phi(\Lambda)$
than the set $\Lambda$. That is the reason
why we introduce the lifting map.
When $\Lambda$ is contained in a finitely generated sub-semigroup of $\mathbb{N}$, $\Phi(\Lambda)$ is contained in a
simpler finitely generated semigroup
$\mathbb{N}_0^d$ with some $1\le d<\infty$
($d$ being the number of prime  generators). Here $\mathbb{N}_0^d$ is considered as a sub-semigroup of $\mathbb{N}_0^{(\mathbb{N})}$.

The Khinchin problem that we are considering concerns the action of the multiplicative semigroup of positive integers
$\mathbb{N}$ on the Lebesgue space $(\mathbb{T}, \mathcal{B}(\mathbb{T}), \mathfrak m)$. Indeed, any 
integer $a \in \mathbb{N}$ defines an endomorphism $T_a$ of $\mathbb{T}$ by $$T_a x = ax \mod 1$$ which preserves the Lebesgue measure $\mathfrak m$,  and 
$T_a$ and $T_b$ commute for any $a,b\in\mathbb N$:
$$
  T_a \circ T_b = T_b \circ T_a.
$$  
It is actually an $\mathbb N_0^{(\mathbb N)}$-action on $(\mathbb T,\mathcal B(\mathbb T),\mathfrak m)$, defined by 
$$
     T^\mathbf{n} x = \Psi(\mathbf{n}) x \mod 1.
$$
It is then not surprising that this $\mathbb N$-action on $\mathbb T$ is connected to the 
 translation actions, one on the semigroup $ \mathbb{N}_0^{(\mathbb N)}$ and the other on the group  $\mathbb{Z}^{(\mathbb N)}$, respectively defined by
\[
\tau_S^{\mathbf n}\mathbf m=\mathbf m+\mathbf n
\quad(\mathbf m,\mathbf n\in  \mathbb{N}_0^{(\mathbb N)}),
\qquad
\tau_G^{\mathbf n}\mathbf m=\mathbf m+\mathbf n
\quad(\mathbf n\in \mathbb{N}_0^{(\mathbb N)},\ \mathbf m \in  \mathbb{Z}^{(\mathbb N)}).
\]
Here we use ``S" and  ``G" to distinguish the semigroup translation and the group translation.

Given $\Lambda=(\lambda_k)_{k\ge1}$, we denote the lift of $\lambda_k$ by
\[
\mathbf n_k:=\Phi(\lambda_k)\in\mathbb N_0^{(\mathbb N)},\qquad k\ge1.
\]
The translation $\tau_S$ along with  $(\mathbf{n}_k)$  is a discrete model for the $L^p$-Khinchin problem.
The associated averaging operators are defined by 
\begin{equation}\label{eq:A_S}
A^{\tau_S}_ Ng(\mathbf m):=\frac1N\sum_{k=1}^N
g\bigl(\mathbf m+\mathbf{n}_k), \quad \mathbf{m} \in \mathbb{N}_0^{(\mathbb N)}
\end{equation}
for functions $g: \mathbb{N}_0^{(\mathbb N)} \to \mathbb{C}$. 
Similarly,  the translation $\tau_G$ along   $(\mathbf{n}_k)$  is another discrete model and the  
associated averaging operators are defined by 
\begin{equation}\label{eq:A_G}
A^{\tau_G}_Ng(\mathbf m):=\frac1N\sum_{k=1}^N
g\bigl(\mathbf m+\mathbf{n}_k), \quad \mathbf{m} \in \mathbb{Z}^{(\mathbb N)}
\end{equation}
for functions $g: \mathbb{Z}^{(\mathbb N)} \to \mathbb{C}$.
Our first result, Theorem \ref{thm:Lp-main-criterion} below, describes $L^p$-{Khinchin} sequences by the weak $(p,p)$ maximal inequality for
$A^{\tau_S}_N$'s and also by that for $A^{\tau_G}_N$'s.

\begin{theorem}[Criteria for $L^p$-{Khinchin} sequences using discrete translations]
\label{thm:Lp-main-criterion}
Let \(1\le p<\infty\), and let
\(\Lambda=(\lambda_n)_{n\ge1}\) be a sequence of distinct positive
integers.  The following are equivalent:
\begin{enumerate}
\item[\textup{(i)}] \(\Lambda\) is an \(L^p\)-Khinchin sequence (namely \eqref{eq:WPMI} holds);
\item[\textup{(ii)}] there exists a constant \(C>0\) such that, for every finitely
supported \(g:\mathbb{N}_0^{(\mathbb N)} \to\mathbb C\) and every \(t>0\), we have
\[
\#\left\{\mathbf m\in \mathbb{N}_0^{(\mathbb N)}:
\sup_{N\ge1}\left|
A^{\tau_S}_ Ng(\mathbf{m})
\right|>t\right\}
\le C\cdot t^{-p}\cdot\|g\|_{\ell^p(\mathbb{N}_0^{(\mathbb N)})}^p;
\]
\item[\textup{(iii)}] there exists a constant \(C>0\) such that,
for every finitely supported
\(g: \mathbb{Z}^{(\mathbb N)}\to\mathbb C\) and every \(t>0\),
\[
\#\left\{\mathbf m\in \mathbb{Z}^{(\mathbb N)}:
\sup_{N\ge1}\left|
A^{\tau_G}_Ng(\mathbf{m})
\right|>t\right\}
\le C\cdot t^{-p}\cdot\|g\|_{\ell^p(\mathbb{Z}^{(\mathbb N)})}^p.
\]
\end{enumerate}
Moreover, the optimal constants in \textup{(ii)} and \textup{(iii)} are equal.
\end{theorem}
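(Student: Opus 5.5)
The plan is to prove (ii)$\Leftrightarrow$(iii) with equal optimal constants first. After that I prove (iii)$\Rightarrow$(i) by a Calder\'on-type transference, and (i)$\Rightarrow$(ii) by a reverse transference built on a Rokhlin-type tower. Throughout I use Theorem~\ref{thm:Lp-khinchin-maximal}, which identifies (i) with the maximal inequality \eqref{eq:WPMI}. By monotone convergence it suffices to treat $\sup_{N\le M}$ for fixed $M$, and to restrict everything to the finitely many coordinates $1,\dots,d$ involved. So I work in $\mathbb N_0^d$ and $\mathbb Z^d$, with the finitely many lifts $\mathbf n_1,\dots,\mathbf n_M\in\mathbb N_0^d$.

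\textbf{(ii)$\Leftrightarrow$(iii).} For (iii)$\Rightarrow$(ii), extend $g$ on $\mathbb N_0^{(\mathbb N)}$ by zero to $\mathbb Z^{(\mathbb N)}$. Since the $\mathbf n_k$ lie in $\mathbb N_0^{(\mathbb N)}$, the averages agree at points $\mathbf m\in\mathbb N_0^{(\mathbb N)}$. Hence the level set in (ii) is contained in the one in (iii), while the $\ell^p$ norms coincide. For (ii)$\Rightarrow$(iii), take $g$ finitely supported on $\mathbb Z^d$ and translate it by a large $\mathbf a\in\mathbb N_0^d$, so that $\mathrm{supp}\,g(\cdot-\mathbf a)\subset \mathbb N_0^d$. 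Every $\mathbf m$ with $\sup_N|A^{\tau_G}_Ng(\mathbf m)|>t$ satisfies $\mathbf m+\mathbf n_k\in\mathrm{supp}\,g$ for some $k\le M$, so it lies in a finite set. Choosing $\mathbf a$ large enough moves this whole finite set into $\mathbb N_0^d$. Translation invariance of counting measure then gives (iii) with the same constant. This shows the optimal constants are equal.

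\textbf{(iii)$\Rightarrow$(i).} This is Calder\'on's argument. Fix $f\in L^p(\mathbb T)$ and $L$ large, and put $Q_L=\{0,\dots,L\}^d$. For each $x$, define $g_x(\mathbf j)=f(\Psi(\mathbf j)x)\mathbf 1_{Q_{L+R}}(\mathbf j)$, where $R$ bounds the coordinates of $\mathbf n_1,\dots,\mathbf n_M$. For $\mathbf m\in Q_L$ and $N\le M$ we have $\Lambda_N f(\Psi(\mathbf m)x)=A^{\tau_S}_Ng_x(\mathbf m)$. Integrating in $x$ and using that each $T^{\mathbf m}$ preserves $\mathfrak m$ gives
\[
(L+1)^d\,\mathfrak m\{\Lambda^*_Mf>t\}\le \int \#\{\mathbf m:\sup_{N\le M}|A^{\tau_S}_Ng_x(\mathbf m)|>t\}\,dx\le Ct^{-p}(L+R+1)^d\|f\|_p^p ,
\]
where the first inequality uses $\Lambda^*_Mf$ for the truncated maximal function. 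Letting $L\to\infty$ yields \eqref{eq:WPMI} with the same $C$. This direction uses only measure preservation and commutativity.

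\textbf{(i)$\Rightarrow$(ii), the main obstacle.} Here I must realize a finitely supported $g$ on $\mathbb N_0^d$ (say on $Q_L$) inside $\mathbb T$. The natural tool is a Rokhlin lemma for the $\mathbb N_0^d$-action generated by the commuting maps $T_{\mathfrak p_1},\dots,T_{\mathfrak p_d}$. This action is free and mixing, and it is exact in each direction. It is non-invertible, however, so towers must be built with preimages. The approach I would try first is to take a set $B$ of small measure such that the sets $T^{-\mathbf j}B$, $\mathbf j\in Q_{L'}$, are pairwise disjoint, and each is $T^{\mathbf j}$-mapped onto $B$ measure-preservingly up to scaling. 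Such a $B$ can be taken to be a tiny interval, by expansiveness and the fact that $\Psi(\mathbf j)$ are distinct. One then sets $f=\sum_{\mathbf j} g(\mathbf j)\mathbf 1_{E_{\mathbf j}}$ on a suitable disjoint family. The cleaner alternative is to avoid exact towers and use equidistribution. Choose $f(x)=\sum_{\mathbf j\in Q_L} g(\mathbf j)\varphi_\delta\bigl(\Psi(\mathbf j)^{-1}\cdot\bigr)$-type bumps, or more simply exploit that for a generic $x$ the points $\Psi(\mathbf j)x$, $\mathbf j\in Q_{L+R}$, are $\delta$-separated. Then $f=\sum_{\mathbf j} g(\mathbf j)\mathbf 1_{I_\delta(\Psi(\mathbf j)x_0)}$ has $\Lambda_Nf(y)=A^{\tau_S}_Ng(\mathbf m)$ on a set of $y$ of measure comparable to $\delta$ near each $\Psi(\mathbf m)x_0$. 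The delicate point is making the measure of these preimage neighborhoods match $\|f\|_p^p$ with the constant exactly preserved. Multiplication by $\lambda$ contracts $T_\lambda^{-1}$-preimage intervals by a factor $\lambda$ but produces $\lambda$ copies, so the measures balance. I expect the bookkeeping of these distortions to be the main difficulty. Exact constant equality is needed only between (ii) and (iii), so a constant loss here is harmless.
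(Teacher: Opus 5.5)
Your arguments for (ii)$\Leftrightarrow$(iii) and for the Calder\'on direction are correct and coincide with the paper's. Your ``(iii)$\Rightarrow$(i)'' actually uses the semigroup inequality (ii), which is fine once (ii)$\Leftrightarrow$(iii) is known. The genuine gap is (i)$\Rightarrow$(ii): you do not prove it, and neither sketch works as written.

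Your ``cleaner alternative'' uses bumps $\mathbf 1_{I_\delta(\Psi(\mathbf j)x_0)}$ around forward images, and it loses a non-uniform factor. For $y$ near $z=\Psi(\mathbf m)x_0$ you need $|\lambda_k(y-z)|<\delta$ for every $k\le M$ simultaneously. So the set near $z$ on which $\Lambda_Nf(y)=A^{\tau_S}_Ng(\mathbf m)$ has measure only about $2\delta/\max_{k\le M}\lambda_k$, while $\|f\|_p^p\approx 2\delta\|g\|_p^p$. This gives the discrete inequality with constant $C\max_{k\le M}\lambda_k$, which blows up as $M\to\infty$. Uniformity in $M$ is exactly what you need to pass to $\sup_{N\ge1}$, so this loss is not harmless. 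The heuristic that the $\lambda_k$ copies in $T_{\lambda_k}^{-1}I_\delta$ ``balance'' does not rescue it, because the other copies are not simultaneously good for all $k$.

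Your first sketch has the right ingredient but misses the key step. For a tiny interval $B$ around an irrational $y_0$, disjointness of the sets $T^{-\mathbf j}B$, $\mathbf j\in Q_{L'}$, is easy to justify: the centres $(y_0+k)/a \bmod 1$ of the components of $T_a^{-1}B$ are pairwise distinct over all pairs $(a,k)$ with distinct $a$. However, the family $(T^{-\mathbf j}B)_{\mathbf j}$ runs the wrong way, since $T^{\mathbf n}$ maps $T^{-(\mathbf j+\mathbf n)}B$ into $T^{-\mathbf j}B$ and so lowers the index. Consequently $f=\sum_{\mathbf j}g(\mathbf j)\mathbf 1_{T^{-\mathbf j}B}$ reproduces the reflected averages $\frac1N\sum_k g(\mathbf m-\mathbf n_k)$ rather than $A^{\tau_S}_Ng(\mathbf m)$.

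The missing idea is to reverse the tower:
\begin{itemize}
\item Let $E_t$ be the finite level set of $\sup_{N\le M}|A^{\tau_S}_Ng|$, let $F=\{\mathbf n_1,\dots,\mathbf n_M\}$, put $\Omega:=\operatorname{supp}g\cup E_t\cup(E_t+F)$, and choose $L'$ so that $\Omega\subset Q_{L'}$.
\item With $\mathbf q=(L'-1,\dots,L'-1)$, set $B_{\mathbf m}:=T^{-(\mathbf q-\mathbf m)}B$ for $\mathbf m\in\Omega$.
\item These sets are pairwise disjoint, all have measure $\mathfrak m(B)$, and $T^{\mathbf n}B_{\mathbf m}\subset B_{\mathbf m+\mathbf n}$ whenever $\mathbf m,\mathbf m+\mathbf n\in\Omega$.
\item Take $f:=\sum_{\mathbf m\in\Omega}g(\mathbf m)\mathbf 1_{B_{\mathbf m}}$. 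Then $\|f\|_p^p=\mathfrak m(B)\|g\|_p^p$, and $\Lambda_Nf=A^{\tau_S}_Ng(\mathbf m)$ on all of $B_{\mathbf m}$ for $\mathbf m\in E_t$ and $N\le M$.
\end{itemize}
Hence $\#E_t\le Ct^{-p}\|g\|_p^p$ with the same $C$.

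This is the finite tower property together with Step 3 of Theorem~\ref{thm:finite-transfer}. The paper obtains the disjoint family from the Avila--Candela lemma (Proposition~\ref{freetower}). Only disjointness is used there, so your elementary tiny-interval construction is a valid and simpler substitute once the reversal is in place.
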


Theorem \ref{thm:Lp-main-criterion} will be proved as a consequence of Theorem \ref{thm:finite-transfer}, a transference principle on general probability spaces.
Theorem \ref{thm:finite-transfer} could be considered as a Conze-type principle
for semigroup $\mathbb{N}_0^{(\mathbb N)}$-actions and for group $\mathbb{Z}^{(\mathbb N)}$-actions.
Conze's original principle deals with invertible transformations, i.e., $\mathbb{Z}$-actions \cite{Conze1973}.
One of the improvements of Theorem \ref{thm:finite-transfer} with respect to Conze's principle is that it is valid for semigroup actions.
The proof of Theorem \ref{thm:finite-transfer} is based on Calder\'on's transference argument and Avila--Candela's Rokhlin lemma  in \cite{Avila_Candela2016} which is a key point. 

We  view $\tau_S$ and $\tau_G$ as two models. There are other models.  Let
\(\pi:(\mathbb N,\cdot)\to(\mathbb T,+)\) be a semigroup homomorphism.  Define
$\tau_\pi^\mathbf{n}: \mathbb{T}\to \mathbb{T}$ by
$$
\tau_\pi^{\mathbf n}(t)=t+\sum_j n_j\pi(\mathfrak p_j)\pmod 1.
$$
We view $\tau_\pi$ as a circle translation model.

\begin{theorem}[Criterion for $L^p$-Khinchin sequences using circle  homomorphisms] \label{cor:homomorphism-criteria}
Let \(1\le p<\infty\).  Let \(\Lambda=(\lambda_n)_{n\ge1}\subset\mathbb N\) be a sequence of
distinct positive integers.
\begin{enumerate}
\item[\textup{(i)}]  If \(\Lambda\) is an
\(L^p\)-Khinchin sequence, then
there exists \(C>0\) such that, for every semigroup homomorphism
\(\pi:(\mathbb N,\cdot)\to(\mathbb T,+)\), every
\(h\in L^p(\mathbb T)\), and every \(t>0\),
\begin{align}\label{translation-maximal}
\mathfrak m\left\{
y\in\mathbb T:
\sup_{n\ge1}
\left|
\frac1n\sum_{k=1}^n h\bigl(y+\pi(\lambda_k) \bigr)
\right|>t
\right\}
\le
\frac{C}{t^p}\|h\|_{L^p(\mathbb T)}^p .
\end{align}
The constant \(C\) may be chosen independent of \(\pi\).
\item[\textup{(ii)}]  
Conversely, if \eqref{translation-maximal} holds for some injective
semigroup homomorphism \(\pi\), then \(\Lambda\) is an
\(L^p\)-Khinchin sequence.
\end{enumerate}
\end{theorem}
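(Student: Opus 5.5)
Both parts will be deduced from Theorem~\ref{thm:Lp-main-criterion}, specifically the equivalence (i)$\Leftrightarrow$(iii) with the group model $\tau_G$ on $\mathbb Z^{(\mathbb N)}$. A homomorphism $\pi:(\mathbb N,\cdot)\to(\mathbb T,+)$ is determined by the values $\alpha_j:=\pi(\mathfrak p_j)$, and $\pi(\lambda_k)=\sum_j n_{k,j}\alpha_j=\langle \mathbf n_k,\alpha\rangle$ with $\mathbf n_k=\Phi(\lambda_k)$. So the circle system $\tau_\pi$ is a factor of the $\mathbb Z^{(\mathbb N)}$-translation via $\mathbf m\mapsto\langle\mathbf m,\alpha\rangle$. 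It is also itself a measure-preserving $\mathbb Z^{(\mathbb N)}$-action (indeed invertible) on the probability space $(\mathbb T,\mathfrak m)$.

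For (i), assume $\Lambda$ is $L^p$-Khinchin. By Theorem~\ref{thm:Lp-main-criterion}, (iii) holds with some constant $C$. I then run the standard Calder\'on transference for the action $\tau_\pi$; equivalently, I invoke the transference Theorem~\ref{thm:finite-transfer} in the direction ``model inequality $\Rightarrow$ inequality for any action''. Concretely:
\begin{enumerate}
\item Reduce to the truncated maximal function $\sup_{n\le N}$ and use monotone convergence at the end.
\item Fix $h\in L^p$ and a large finite box $B_L\subset\mathbb Z^d$, where $d$ bounds the number of primes used by $\lambda_1,\dots,\lambda_N$. Choose $L\gg\max_{k\le N}|\mathbf n_k|$.
\item For each $y$, set $g_y(\mathbf m)=h(y+\langle\mathbf m,\alpha\rangle)\one_{B_{L+R}}(\mathbf m)$, where $R$ bounds the shifts $\mathbf n_k$ for $k\le N$.
\item Apply (iii) to $g_y$, integrate in $y$, and use invariance of $\mathfrak m$ under each translation. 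This gives
\[
\#B_L\cdot\mathfrak m\{\Lambda^*_{\pi,N}h>t\}\le C t^{-p}\#B_{L+R}\,\|h\|_p^p .
\]
\item Divide by $\#B_L$ and let $L\to\infty$; the ratio $\#B_{L+R}/\#B_L\to1$.
\end{enumerate}
The resulting constant is the constant of (iii), which is independent of $\pi$.

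For (ii), suppose \eqref{translation-maximal} holds for an injective $\pi$. Injectivity on $(\mathbb N,\cdot)$ extends to injectivity of the group homomorphism $\mathbb Z^{(\mathbb N)}\to\mathbb T$, $\mathbf m\mapsto\langle\mathbf m,\alpha\rangle$. To see this, suppose $\langle\mathbf m,\alpha\rangle=0$ and write $\mathbf m=\mathbf m^+-\mathbf m^-$. Then $\pi(\Psi(\mathbf m^+))=\pi(\Psi(\mathbf m^-))$, so $\mathbf m^+=\mathbf m^-$. Hence the $\mathbb Z^{(\mathbb N)}$-action $\tau_\pi$ on $\mathbb T$ is free, so every finite configuration of orbit points is distinct. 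I then prove (iii) of Theorem~\ref{thm:Lp-main-criterion} by the reverse transference:
\begin{enumerate}
\item Fix a finitely supported $g$ on $\mathbb Z^{(\mathbb N)}$ and $N$. Let $F\subset\mathbb Z^d$ be the finite set where $\sup_{n\le N}|A^{\tau_G}_ng|>t$; it is contained in $\operatorname{supp}g-\{\mathbf n_k\}_{k\le N}$.
\item By freeness and continuity, choose $\delta>0$ so that the arcs $I_{\mathbf m}=\langle\mathbf m,\alpha\rangle+[0,\delta)$, for $\mathbf m$ in the finite set $E=F\cup\operatorname{supp}g\cup(F+\{\mathbf n_k\})$, are pairwise disjoint.
\item Define $h=\sum_{\mathbf m\in\operatorname{supp}g} g(\mathbf m)\one_{I_{\mathbf m}}$. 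Then $\|h\|_p^p=\delta\|g\|_p^p$.
\item For $y\in I_{\mathbf m}$ with $\mathbf m\in F$ and $n\le N$, write $y=\langle\mathbf m,\alpha\rangle+s$ with $0\le s<\delta$. Then $y+\pi(\lambda_k)=\langle\mathbf m+\mathbf n_k,\alpha\rangle+s$, and by disjointness $h(y+\pi(\lambda_k))=g(\mathbf m+\mathbf n_k)$. So the circle average at $y$ equals $A^{\tau_G}_ng(\mathbf m)$.
\item Hence $\delta\#F\le\mathfrak m\{\cdots>t\}\le Ct^{-p}\delta\|g\|_p^p$. Cancelling $\delta$ gives (iii) with the same $C$, and Theorem~\ref{thm:Lp-main-criterion} then yields that $\Lambda$ is $L^p$-Khinchin.
\end{enumerate}

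The main obstacle is the bookkeeping in (i): the averages only use the shifts $\mathbf n_k$, which lie in $\mathbb N_0^{(\mathbb N)}$, and the box truncation must make the edge effects negligible. Since the $\mathbf n_k$ for $k\le N$ all lie in a fixed finite-dimensional $\mathbb Z^d$, ordinary boxes are Følner sets there, which handles it. In (ii) the delicate point is the extension of injectivity to the group, handled above.
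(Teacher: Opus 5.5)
Your proposal is correct. Part (i) follows essentially the paper's argument: Calder\'on transference from the lattice model to the rotation action $y\mapsto y+\langle\mathbf n,\alpha\rangle$, with the constant inherited from the lattice inequality and hence independent of $\pi$. The paper phrases this through Theorem~\ref{thm:Lp-khinchin-maximal} and Theorem~\ref{thm:finite-transfer} on $\mathbb Z^{d_N}$, built from the primes dividing $\lambda_1,\dots,\lambda_N$. You go through Theorem~\ref{thm:Lp-main-criterion}(iii) on $\mathbb Z^{(\mathbb N)}$ instead, which amounts to the same thing.

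Part (ii) is where you genuinely differ. The paper checks that the $\mathbb N_0^{d_N}$-action $T_N$ is free and then invokes the implication (i)$\Rightarrow$(ii) of Theorem~\ref{thm:finite-transfer}, whose finite tower property rests on the Avila--Candela Rokhlin lemma (Proposition~\ref{freetower}). You build the tower by hand:
\begin{itemize}
\item you correctly upgrade injectivity of $\pi$ to injectivity of $\mathbf m\mapsto\langle\mathbf m,\alpha\rangle$ on $\mathbb Z^{(\mathbb N)}$;
\item this makes the finitely many points $\langle\mathbf m,\alpha\rangle$, $\mathbf m\in E$, distinct;
\item rotations are rigid, so the arcs satisfy $I_{\mathbf m}+\langle\mathbf n,\alpha\rangle=I_{\mathbf m+\mathbf n}$ exactly, which is all that Step~3 of the transference proof needs.
\end{itemize}

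What your route buys is an elementary, self-contained proof with no Rokhlin-type lemma. Your tower holds everywhere rather than almost everywhere. It is indexed by a finite subset of the group $\mathbb Z^{(\mathbb N)}$, so you land directly in model (iii). It also sidesteps a step the paper leaves implicit: passing from $\mathbb Z^{d_N}$ with $\sup_{n\le N}$ to $\mathbb Z^{(\mathbb N)}$ with $\sup_{n\ge1}$. The paper's route, in exchange, is uniform with its general Conze-type principle and covers free actions that are not rotations.

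Two cosmetic points. In (i), take $\mathbb Z^d$ to be the coordinates of the primes actually occurring, not just a bound on their number. In (ii), state explicitly the final monotone limit $N\to\infty$.
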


We can construct, in principle, all semigroup homomorphisms  from \((\mathbb N,\cdot)\) to $(\mathbb{T}, +)$, because
every such semigroup homomorphism  is determined by
its values on the primes which can be arbitrary. But, in general, the behavior of  such a homomorphism $\pi$  on $\Lambda$ is not always very clear.
As we will see, the existence of a homomorphism $\pi$ for which  $\pi(\lambda_n)$ admits a limit in $\mathbb{T}$ is an obstruction for  $\Lambda$ being a 
$L^p$-Khinchin sequence (Theorem \ref{cor:Bourgain-homomorphism-obstruction}).  

 Theorem \ref{cor:homomorphism-criteria} is in the spirit of Quas--Wierdl
\cite{Quas_Wierdl2010}, who used the homomorphism
\(n\mapsto\log n\) to study the failure of the \(L^p\)-Khinchin property.
Together with a result of Bourgain (Proposition~10 in \cite{Bourgain1988}), it allows us to get an obstruction for $\Lambda$ being 
  an $L^p$-Khinchin sequence (Theorem \ref{cor:Bourgain-homomorphism-obstruction}). 
Thus we get the following list of non-\(L^p\)-Khinchin sequences:
\begin{enumerate}
\item $(2^n+1)_{n\ge1}$ and $(2^n-1)_{n\ge1}$.  See Proposition~\ref{cor:2n-plus-one-not-khinchin}.
\item {{Fibonacci sequence $(F_n)_{n\ge1}$ defined by
$F_1=F_2=1$, $F_n=F_{n-1}+F_{n-2}$ for $n\ge3$.}}
See Proposition~\ref{cor:fibonacci-not-khinchin}.
\item Any sequence $(q_n)_{n\ge 1}$ of prime numbers (in any order) and the associated product sequence $(q_1q_2\cdots q_n)_{n\ge 1}$. See Proposition~\ref{cor:prime-subsequence-not-khinchin}.
\end{enumerate}

Notice that if two sequences of integers differ for only finitely many  terms, they are $L^p$-Khinchin sequences at the same time. 

Recall that 
\((2^n+1)\) 
is nothing but a translation of \((2^n)\). Thus, we see that the Khinchin property is not preserved by translation.  However,
it is plainly preserved by multiplication: if
\((\lambda_n)\) is an \(L^p\)-Khinchin sequence, so is
\((a\lambda_n)\) for any \(a\in\mathbb N\).

The sequence $(2^n+1)$ shares another pathological property. 
R. Salem and A. S. Zygmund \cite{SZ1948}
proved the central limit theorem (CLT) for the sequence $(\cos(2\pi (2^k+1)x))_{k\ge 1}$. 
But if the cosine  function is replaced by a more general function, the CLT may fail:  Erd\"{o}s and Fortet gave the example $\phi(x)=\cos(2\pi x)+\cos(4\pi x)$  for which 
the limit distribution of the normalized sums $\frac{1}{\sqrt{N}}\sum_{n=1}^N \phi((2^n+1)x)$
is not a Gaussian distribution, but a mixture of Gaussian distributions having characteristic function $t \mapsto \int_0^1 e^{(\cos (\pi y))^2 t^2} dy$.
So, the two sequences $(2^n x \mod 1)$ and $((2^n+1)x \mod 1)$, considered as random processes  with respect to the Lebesgue measure,  are very different. 
Conze and Le Borgne  \cite{CL2011} generalized this example, interpreted as the limiting behavior of certain modified ergodic sums in the framework of dynamical systems. Their generalizations include, e.g., the case of expanding maps, group actions, and chaotic dynamical systems under the assumption of multiple decorrelation. Several examples and applications are given, e.g., for the case of automorphisms of tori and automorphisms of nilmanifolds.

Let us now consider sequences $\Lambda$ contained in a finitely generated sub-semigroup of $\mathbb N$.   
Fix \(d\ge1\) and let \(q_1,\ldots,q_d\ge2\) be pairwise coprime.  Consider the semigroup
\[
S(q_1,\ldots,q_d)
:=
\left\{
q_1^{n_1}\cdots q_d^{n_d}:
(n_1,\ldots,n_d)\in\mathbb N_0^d
\right\}.
\]
The mapping $\phi:S(q_1,\ldots,q_d)\longrightarrow\mathbb N_0^d$ defined by 
\[
\phi(q_1^{n_1}\cdots q_d^{n_d})=(n_1,\ldots,n_d)
\]
is a semigroup isomorphism. 
Notice that, in general, $\phi$ is not a restriction of $\Phi$ defined above. 
But we  also call $\phi$ the lifting map from $S(q_1,\ldots,q_d)$ to $\mathbb N_0^d$.

\begin{theorem}[Criterion for $L^p$-Khinchin sequences in $S(q_1,\ldots,q_d)$]
\label{cor:finite-generated-Lp}
Let \(1\le p<\infty\), and let
\(\Lambda=(\lambda_n)_{n\ge1}\subset S(q_1,\ldots,q_d)\) be a sequence of
distinct numbers.  Then \(\Lambda\) is an \(L^p\)-Khinchin sequence if and
only if there exists \(C>0\) such that, for every finitely supported
\(g:\mathbb Z^d\to\mathbb C\) and every \(t>0\),
\[
\#\left\{\mathbf z\in\mathbb Z^d:
\sup_{N\ge1}\left|
\frac1N\sum_{k=1}^N
g\bigl(\mathbf z+\phi(\lambda_k)\bigr)
\right|>t\right\}
\le C\cdot t^{-p}\cdot\|g\|_{\ell^p(\mathbb Z^d)}^p.
\]
\end{theorem}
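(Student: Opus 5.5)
The plan is to deduce this from Theorem~\ref{thm:Lp-main-criterion}, or more precisely from the general transference principle (Theorem~\ref{thm:finite-transfer}) behind it, applied to a finitely generated action rather than the full $\mathbb N_0^{(\mathbb N)}$-action. The point is that $\phi$ is not a restriction of $\Phi$: the $q_i$ need not be primes. So one cannot simply restrict statement (iii) to the subgroup $\mathbb Z^d\subset\mathbb Z^{(\mathbb N)}$. Instead, consider the $\mathbb N_0^d$-action on $(\mathbb T,\mathfrak m)$ given by $T^{\mathbf n}x=q_1^{n_1}\cdots q_d^{n_d}x \bmod 1$. Because the $q_i$ are pairwise coprime and $\phi$ is an isomorphism, $\lambda_k x = T^{\phi(\lambda_k)}x$. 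Hence $\Lambda_n f = \frac1n\sum_{k\le n} f\circ T^{\phi(\lambda_k)}$, and the weak $(p,p)$ maximal inequality \eqref{eq:WPMI} is exactly a weak maximal inequality for averages of this $\mathbb N_0^d$-action along the path $(\phi(\lambda_k))$.

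\emph{Direction ``$\Leftarrow$''.} Assume the $\mathbb Z^d$ inequality holds. I would run Calder\'on's transference argument. Fix $N_0$ and a large box $B_L=[0,L)^d\cap\mathbb N_0^d$, and let $R$ bound the coordinates of $\phi(\lambda_k)$ for $k\le N_0$. For each $x$, apply the discrete inequality to
\[
g_x(\mathbf z)=f(T^{\mathbf z}x)\,\one_{B_{L+R}}(\mathbf z).
\]
Integrate in $x$ and use measure preservation of each $T^{\mathbf z}$ (no invertibility is needed here). This gives
\[
\#B_L\cdot \mathfrak m\{\max_{n\le N_0}|\Lambda_n f|>t\}\le C t^{-p}\,\#B_{L+R}\,\|f\|_p^p .
\]
Letting $L\to\infty$ and then $N_0\to\infty$ yields \eqref{eq:WPMI}, and hence the $L^p$-Khinchin property by Theorem~\ref{thm:Lp-khinchin-maximal}.

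\emph{Direction ``$\Rightarrow$''.} This is the Conze-type direction. Given \eqref{eq:WPMI}, I want to deduce the inequality for translations on $\mathbb Z^d$. It suffices to prove it on $\mathbb N_0^d$ with the semigroup translation $\tau_S$. Indeed, a finitely supported $g$ on $\mathbb Z^d$ can be shifted into $\mathbb N_0^d$, and since all $\phi(\lambda_k)\in\mathbb N_0^d$, the points $\mathbf m$ with nonzero maximal function lie in finitely many shifts of the positive cone. So a translation argument, exactly as in the equality of optimal constants in Theorem~\ref{thm:Lp-main-criterion}, transfers the bound. For the $\mathbb N_0^d$ bound, I would use the Avila--Candela Rokhlin lemma for the $\mathbb N_0^d$-action, invoked through Theorem~\ref{thm:finite-transfer}. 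That theorem presumably states that a free (or aperiodic) action satisfying the weak maximal inequality forces the model inequality with the same constant. Concretely, find a set $E$ such that the sets $T^{-\mathbf n}E$, $\mathbf n\in B_{L}$, are nearly disjoint with total measure near $1$. Then define $f=\sum_{\mathbf n} g(\mathbf n)\one_{T^{-\mathbf n}E}$ and compare the two maximal functions on the tower.

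The main obstacle is verifying the hypothesis of the Rokhlin lemma, namely that the $\mathbb N_0^d$-action is free. This means that for $\mathbf n\ne\mathbf n'$, the maps $x\mapsto q^{\mathbf n}x$ and $x\mapsto q^{\mathbf n'}x$ agree only on a null set. That holds because $q^{\mathbf n}\neq q^{\mathbf n'}$ by unique factorization under pairwise coprimality, and $\{x:(a-b)x\in\mathbb Z\}$ is finite when $a\neq b$. If Theorem~\ref{thm:finite-transfer} is stated only for $\mathbb N_0^{(\mathbb N)}$-actions, I would apply it to the action $\mathbf n\mapsto T^{(n_1,\dots,n_d)}$ restricted to the first $d$ coordinates, viewing $\mathbb N_0^d\subset\mathbb N_0^{(\mathbb N)}$. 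Alternatively, I would repeat its proof verbatim with $d$ generators.
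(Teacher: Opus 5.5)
Your proposal is correct and is essentially the paper's proof: the paper also uses the $\mathbb N_0^d$-action $\alpha^{\mathbf m}x=q_1^{m_1}\cdots q_d^{m_d}x$, checks freeness via pairwise coprimality, applies Theorem~\ref{thm:finite-transfer} (i)$\Leftrightarrow$(iii) to the truncated maximal operators with weights $\frac1N\sum_{k\le N}\delta_{\phi(\lambda_k)}$, lets $J\to\infty$, and concludes with Theorem~\ref{thm:Lp-khinchin-maximal}. There are two small slips, both already handled inside Theorem~\ref{thm:finite-transfer}. First, the points where the maximal function is nonzero lie in $\operatorname{supp}g-\mathbb N_0^d$, i.e.\ shifts of the \emph{negative} cone, so the reduction to $\mathbb N_0^d$ should instead use that the level set at height $t>0$ is finite, because $\bigl|\frac1N\sum_{k\le N}g(\mathbf z+\phi(\lambda_k))\bigr|\le\|g\|_{\ell^1}/N$ (alternatively, truncate to $N\le J$). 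Second, with $f=\sum g(\mathbf n)\one_{T^{-\mathbf n}E}$ the action moves \emph{down} the tower: $f(T^{\mathbf n}x)=g(\mathbf m-\mathbf n)$ for $x\in T^{-\mathbf m}E$ and $\mathbf n\le\mathbf m$, which is why the paper reverses the indexing with $B_{\mathbf m}=(\alpha^{\mathbf q-\mathbf m})^{-1}E$.
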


Theorem~\ref {cor:finite-generated-Lp} is also a consequence of Theorem \ref{thm:finite-transfer}. If the lifted sequence {\((\phi(\lambda_k))_{k\ge1}\)} satisfies a condition of
Tempelman type, 
Theorem \ref{cor:finite-generated-Lp} implies that $(\lambda_k)$ is {an $L^1$-Khinchin} sequence. Recall that
for subsets \(H_1,H_2\subseteq\mathbb Z^d\), their Minkowski difference is defined by
$$  H_1-H_2:=\{h_1-h_2:h_1\in H_1,\ h_2\in H_2\}.
$$

\begin{theorem}[Tempelman type condition]
\label{cor:difference-set-L1}
A sequence of
distinct integers \(\Lambda=(\lambda_n)_{n\ge1}\)  contained in the semigroup $S(q_1,\ldots,q_d)$
is an \(L^1\)-Khinchin sequence if there exists
a constant \(C>0\) such that, for every \(n\ge1\),
\begin{equation}\label{eq:I-I}
\#(I_n-I_n)\le C\# I_n,
\end{equation}
where $
I_n:=\{\phi(\lambda_k):1\le k\le n\}$. 
\end{theorem}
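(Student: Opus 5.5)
By Theorem~\ref{cor:finite-generated-Lp} with $p=1$, it suffices to prove a weak $(1,1)$ maximal inequality on $\mathbb Z^d$. Write $\mathbf v_k:=\phi(\lambda_k)\in\mathbb N_0^d$. These points are distinct, since the $\lambda_k$ are distinct and $\phi$ is injective. Then $I_N=\{\mathbf v_1,\dots,\mathbf v_N\}$ has exactly $N$ elements. The averages are
\[
A_Ng(\mathbf z)=\frac1N\sum_{k=1}^N g(\mathbf z+\mathbf v_k)=\frac{1}{\# I_N}\sum_{\mathbf h\in I_N} g(\mathbf z+\mathbf h).
\]
So the task is a weak $(1,1)$ bound for the maximal operator over the increasing family of finite sets $I_1\subset I_2\subset\cdots$ in $\mathbb Z^d$, under the Tempelman condition $\#(I_N-I_N)\le C\,\#I_N$. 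This is the classical Tempelman maximal inequality, and I would prove it by a Vitali-type covering argument.

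We may assume $g\ge0$ is finitely supported. Fix $t>0$ and a finite set $F\subset\{\mathbf z:\sup_N A_Ng(\mathbf z)>t\}$. It suffices to show $\#F\le C'\,t^{-1}\|g\|_{\ell^1}$ with $C'$ independent of $F$.

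For each $\mathbf z\in F$, choose $N(\mathbf z)$ such that $\sum_{\mathbf h\in I_{N(\mathbf z)}}g(\mathbf z+\mathbf h)>t\,\#I_{N(\mathbf z)}$. Write $E_{\mathbf z}:=\mathbf z+I_{N(\mathbf z)}$, so $\#E_{\mathbf z}=N(\mathbf z)$ and $\sum_{E_{\mathbf z}}g>t\,\#E_{\mathbf z}$.

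Now run a greedy selection. Order the $\mathbf z\in F$ by decreasing $N(\mathbf z)$. Pick the first one, discard every $\mathbf z'$ with $E_{\mathbf z'}\cap E_{\mathbf z}\neq\emptyset$, and repeat. This yields disjoint sets $E_{\mathbf z_1},\dots,E_{\mathbf z_r}$.

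\emph{Covering step.} Suppose $\mathbf z'$ was discarded because of $\mathbf z_i$. Then $N(\mathbf z')\le N(\mathbf z_i)=:M$, and $\mathbf z'+\mathbf h'=\mathbf z_i+\mathbf h$ for some $\mathbf h\in I_M$ and $\mathbf h'\in I_{N(\mathbf z')}\subset I_M$, using the nestedness $I_n\subset I_M$. Hence $\mathbf z'\in \mathbf z_i+(I_M-I_M)$. Therefore
\[
\#F\le\sum_{i=1}^r\#(I_{N(\mathbf z_i)}-I_{N(\mathbf z_i)})\le C\sum_{i=1}^r\#E_{\mathbf z_i}<\frac Ct\sum_{i=1}^r\sum_{E_{\mathbf z_i}}g\le\frac Ct\|g\|_{\ell^1}.
\]
The last inequality uses the disjointness of the selected sets.

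Letting $F$ exhaust the level set gives the weak $(1,1)$ inequality with constant $C$ for $g\ge0$. For complex $g$, apply it to $|g|$, which at most costs a factor in the constant. Theorem~\ref{cor:finite-generated-Lp} then yields that $\Lambda$ is $L^1$-Khinchin.

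The only delicate point, and where I would be most careful, is the covering step. It needs two things: the nested structure $I_n\subset I_M$ for $n\le M$, which holds automatically because the $I_n$ are initial segments of the same sequence; and the choice to process larger sets first, so that each discarded set has index at most that of the set it meets. Everything else is routine.
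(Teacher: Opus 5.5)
Your proposal is correct and follows essentially the same route as the paper: you reduce via Theorem~\ref{cor:finite-generated-Lp} to a weak $(1,1)$ inequality on $\mathbb Z^d$, then run a greedy Vitali-type selection by decreasing $N(\mathbf z)$, using the nestedness of the $I_n$ to cover by translates of $I_M-I_M$ and the Tempelman bound together with disjointness to conclude. The only cosmetic difference is that you bound arbitrary finite subsets of the level set (for $g\ge0$), whereas the paper first shows the level set is finite via the minimal choice of $n(\mathbf z)$; also, passing to $|g|$ costs no factor at all, since $|A_Ng|\le A_N|g|$.
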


The sufficient condition \eqref{eq:I-I} can be easily realized for
some 
multiplicative sequences in $S(q_1, \ldots, q_d)$.
Following \cite{FFQQ}, to a given infinite word
\(u=(u_n)_{n\ge1}\in\{1,2,\ldots,d\}^{\mathbb N}\), we associate the so-called
\emph{multiplicative sequence}
\(\Lambda(u)=(\lambda_n)_{n\ge1}\) defined by
\[
  \lambda_n=q_{u_1}q_{u_2}\cdots q_{u_n},
  \qquad n\ge1.
\]
It was proved in \cite{FFQQ} that when \(u\in\{1,2\}^{\mathbb N}\) is the
Thue--Morse sequence defined by the substitution $1\to 12, 2\to 21$, \(\Lambda(u)\) in $S(q_1, q_2)$ is an \(L^1\)-Khinchin sequence.  The proof uses
special features of the Thue--Morse sequence and does not apply to other substitutive sequences.  
Now, as consequences of Theorem \ref{cor:difference-set-L1}, we get the following list of $L^p$-Khinchin sequences:
\begin{enumerate}
\item  \(\Lambda(u)\) is
\(L^1\)-Khinchin for any balanced substitutive sequence
\(u\in\{1,2,\ldots,d\}^{\mathbb N}\) (see Proposition~\ref{prop:balanced-L1}).
{By Adamczewski \cite[Theorem~1(i)]{Adam04}, a primitive substitutive sequence is balanced when the second-largest eigenvalue of its incidence matrix has absolute value strictly less than $1$.}
\item \(\Lambda(u)\) is
\(L^1\)-Khinchin if $(u_n)$ is Sturmian, because Sturmian sequences are balanced. 
\end{enumerate}


As another  application of  Theorem~\ref{cor:difference-set-L1}, we can give another  proof of the following theorem due to
Marstrand \cite{Marstrand} and Nair \cite{Nair1990}.

\begin{theorem}[Marstrand--Nair]\label{cor:marstrand-nair}
Let \(S(q_1,\ldots,q_d)\) be arranged in increasing order. Then this increasing
enumeration is an \(L^1\)-Khinchin sequence.
\end{theorem}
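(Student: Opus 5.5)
The plan is to deduce the statement from Theorem~\ref{cor:difference-set-L1}. This reduces it to checking the Tempelman-type doubling condition \eqref{eq:I-I} for the sets $I_n=\{\phi(\lambda_k):1\le k\le n\}$, where $(\lambda_k)$ is the increasing enumeration of $S(q_1,\ldots,q_d)$.

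Set $a_j:=\log q_j>0$ and $\ell(\mathbf m):=\sum_j a_j m_j$ for $\mathbf m\in\mathbb R^d$. Since the $q_j$ are pairwise coprime, $\phi$ is a bijection onto $\mathbb N_0^d$, and ordering by size is ordering by $\ell$. Hence
\[
I_n=\{\mathbf m\in\mathbb N_0^d:\ \ell(\mathbf m)\le R_n\},\qquad R_n:=\log\lambda_n .
\]
Each $I_n$ is exactly the set of lattice points in the simplex $\Delta(R):=\{\mathbf x\in\mathbb R_{\ge0}^d:\ell(\mathbf x)\le R\}$ with $R=R_n$. Ties cannot occur, because distinct lattice points give distinct values of $\prod q_j^{m_j}$.

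The key estimate is the difference set. If $\mathbf m,\mathbf m'\in I_n$, then $\mathbf z=\mathbf m-\mathbf m'$ satisfies $\sum_j a_j|z_j|\le 2R_n$. Thus $I_n-I_n$ lies in the lattice points of the cross-polytope $\{\sum a_j|x_j|\le 2R_n\}$, which is the union of $2^d$ reflected copies of $\Delta(2R_n)$. Therefore
\[
\#(I_n-I_n)\le 2^d\,\#\bigl(\Delta(2R_n)\cap\mathbb Z^d\bigr).
\]
It then suffices to prove a uniform doubling bound
\[
\#\bigl(\Delta(2R)\cap\mathbb Z^d\bigr)\le C_d\,\#\bigl(\Delta(R)\cap\mathbb Z^d\bigr)\quad\text{for all } R\ge0,
\]
which gives \eqref{eq:I-I} with $C=2^dC_d$.

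The doubling bound is the only step needing care, because it must hold uniformly, including small $R$ where volume asymptotics are not yet accurate. I would avoid asymptotics and use a direct injection. Each lattice point $\mathbf m\in\Delta(2R)$ maps to $\lfloor \mathbf m/2\rfloor$ (componentwise). We have $\ell(\lfloor\mathbf m/2\rfloor)\le\ell(\mathbf m)/2\le R$, so the image lies in $\Delta(R)$. Each fibre has at most $2^d$ points, since each coordinate $m_j$ is determined by $\lfloor m_j/2\rfloor$ up to two choices. Hence $C_d=2^d$ works.

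This yields $\#(I_n-I_n)\le 4^d\#I_n$ for those $n$ where $I_n$ is a full sublevel set, and by the tie-free remark above this is every $n$. Theorem~\ref{cor:difference-set-L1} then shows the increasing enumeration is $L^1$-Khinchin.
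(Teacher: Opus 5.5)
Your proof is correct. Its skeleton is the paper's: you reduce to Theorem~\ref{cor:difference-set-L1}, identify $I_n=\phi\bigl(S(q_1,\ldots,q_d)\cap[1,\lambda_n]\bigr)$ with the lattice points of the simplex $\{\ell\le\log\lambda_n\}$, and verify the Tempelman condition \eqref{eq:I-I}. The only difference is how the counting is done.

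The paper places $P_R-P_R$ in the box $\prod_j[-\log R/\log q_j,\log R/\log q_j]$. It then compares the box's $O((\log R)^d)$ lattice points with the volume asymptotic $\#P_R\asymp(\log R)^d/(\log q_1\cdots\log q_d)$. That comparison is only meaningful for large $R$; the finitely many small $n$ are implicitly absorbed into the constant.

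You instead place $I_n-I_n$ in the cross-polytope $\{\sum_j a_j|z_j|\le 2R_n\}$ and fold it onto the positive orthant at a cost of $2^d$. You then prove the doubling bound $\#(\Delta(2R)\cap\mathbb Z^d)\le 2^d\,\#(\Delta(R)\cap\mathbb Z^d)$ with the halving map $\mathbf m\mapsto\lfloor\mathbf m/2\rfloor$. This avoids lattice-point asymptotics entirely, which the paper uses without proof. It also gives the explicit constant $C=4^d$, valid uniformly for every $n$, so no separate treatment of small $R$ is needed. Your version is slightly more self-contained; the paper's is shorter if one accepts the standard volume estimate.
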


Let \(d\ge2\).  Consider the multiplicative sequence
\(\Lambda(u)=(\lambda_n(u))_{n\ge1}\) in the semigroup
\(S(q_1,\ldots,q_d)\), where \(u_1,u_2,\ldots\) are chosen independently
from the alphabet \(\{1,\ldots,d\}\) according to a probability vector
\(\mathbf r=(r_1,\ldots,r_d)\).
It is proved in \cite{FFQQ} that for any $f \in L^1(\mathbb{T})$, 
for almost every $u \in \{1,2, \ldots,  d\}^{\mathbb{N}}$, 
\begin{align*}
  \lim_{n \to \infty}\frac{1}{n} \sum_{k=1}^n f(q_{u_1} q_{u_2} \cdots q_{u_k} x) = \int_{\mathbb{T}} f \, d \mathfrak{m} \qquad \mathfrak{m}-a.e.
\end{align*}
This result might suggest that, for almost every \(u\), \(\Lambda(u)\) is an \(L^1\)-Khinchin sequence. 
But we prove here that almost surely $\Lambda(u)$ is  not an
$L^p$-Khinchin sequence.

\begin{theorem}\label{thm:random-symbolic-non-Linfty}
  Let \(d\ge2\), and let \(\{1,2,\ldots,d\}^{\mathbb N}\) be equipped
  with the \((r_1,r_2,\ldots,r_d)\)-Bernoulli measure with
  \(r_j>0\) for all \(1\le j\le d\). Then for almost every
  \(u \in \{1, 2, \ldots, d\}^{\mathbb{N}}\), 
  \(\Lambda(u)\) is not an \(L^p\)-Khinchin sequence {for any $1\le p<\infty$}.
\end{theorem}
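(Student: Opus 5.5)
We will reduce the statement, via Theorem~\ref{cor:finite-generated-Lp}, to the failure of a weak $(p,p)$ maximal inequality on $\mathbb Z^d$ for averages along the lifted path $\mathbf n_k:=\phi(\lambda_k(u))=\sum_{i\le k} e_{u_i}$. This path is a nearest-neighbour random walk in $\mathbb Z^d$ with i.i.d.\ steps $e_j$ taken with probabilities $r_j$. Because $\sum_j (\mathbf n_k)_j = k$, we may project onto the hyperplane $\{\sum z_j=0\}$ along the drift direction. Writing $\mathbf n_k = k\mathbf r + \mathbf w_k$, the process $\mathbf w_k$ is a centered random walk with nondegenerate covariance on a $(d-1)$-dimensional lattice. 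Since $d\ge 2$ and all $r_j>0$, it genuinely fluctuates on the scale $\sqrt{k}$.

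The strategy is to use the fluctuations to defeat a maximal inequality. We want a finitely supported $g$ with $\|g\|_p^p$ small relative to the set where $\sup_N |A_N g|>t$. The natural obstruction, analogous to Bourgain's entropy or Rosenblatt-type arguments, uses almost sure properties of the walk. By the law of the iterated logarithm, or simply by recurrence-type estimates, for infinitely many scales $N$ the initial segment of the path spends a positive fraction of time in long excursions. More concretely, we will use that almost surely there are arbitrarily long stretches where $u$ is constant. For every $L$, the word $j^L$ occurs infinitely often, and in fact such runs occur within the first $N$ letters with $L \approx c\log N$.

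We would rather use the concrete structure of long runs. Along a run of length $L$ of the letter $1$, starting at time $k_0$, the points $\mathbf n_k$ for $k_0\le k\le k_0+L$ lie on a segment in direction $e_1$. Take $g$ to be the indicator of a set $E\subset\mathbb Z^d$ of size $M$. Then $A_N g(\mathbf z)\ge L/N$ whenever $\mathbf z+\mathbf n_{k_0}+s e_1\in E$ for $0\le s\le L$. This alone gives too weak a bound, so instead we would aim at the criterion of Theorem~\ref{cor:Bourgain-homomorphism-obstruction}. We look for a semigroup homomorphism $\pi$ under which $\pi(\lambda_n(u))$ has a limit along a subsequence of density structure strong enough to invoke Bourgain's Proposition~10. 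The plan is to choose $\pi(q_j)=\theta_j$ with $\sum_j r_j\theta_j = 0$ in $\mathbb T$ chosen suitably, for instance $\theta_j$ small reals with $\sum r_j\theta_j=0$ exactly. Then $\pi(\lambda_n)=\sum_j \theta_j (\mathbf w_n)_j$ behaves like a centered one-dimensional random walk scaled by $\theta$.

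Adjusting the scale of $\theta$ along subsequences, together with Theorem~\ref{cor:homomorphism-criteria}(i), we want to show that the translation maximal operator $h\mapsto \sup_n |n^{-1}\sum_{k\le n} h(y+\pi(\lambda_k))|$ cannot be uniformly weak $(p,p)$ over all $\pi$. The key step is to take $\theta=\varepsilon \mathbf a$ with $\mathbf a\cdot\mathbf r=0$ and let $\varepsilon\to 0$, so that the walk values $\varepsilon\,\mathbf a\cdot\mathbf w_k$ lie in a tiny interval for $k\le N_\varepsilon\approx \varepsilon^{-2}\delta$. After rescaling, the averages near $y$ become averages of $h$ over a random, essentially Brownian, set of points near $y$. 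The obstruction is then of Bourgain--Marstrand type: a sequence of points, here the values $\mathbf a\cdot \mathbf w_k$, that repeatedly accumulate (by recurrence, the walk returns near $0$ and revisits each site about $\sqrt{N}$ times). This produces Bourgain's entropy/accumulation condition uniformly in $\pi$, contradicting the uniform constant $C$.

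The main obstacle is making this last step rigorous almost surely in $u$. We must extract from the random walk a deterministic combinatorial configuration satisfying the hypotheses of Bourgain's Proposition~10. The simplest route we would try first is the one-dimensional local time: we show that almost surely, for infinitely many $N$, a fixed positive proportion of $\{\mathbf a\cdot\mathbf w_k: k\le N\}$ concentrates on $O(\sqrt N)$ values, each hit about $\sqrt N$ times. This follows from the local CLT and Borel--Cantelli along a sparse sequence of $N$. We would then feed this concentration, with varying $\varepsilon$, into the obstruction theorem.
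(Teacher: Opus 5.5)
There is a genuine gap. Your set-up is fine: choosing $\pi(q_j)=\theta_j$ with $\sum_j r_j\theta_j=0$ turns $\pi(\lambda_n(u))$ into the centered walk $\sum_j\theta_j(\mathbf w_n)_j$. This is the circle analogue of the paper's projection $\mathbb Z^d\to\mathbb Z^2$ followed by $\beta^{(m,n)}=U_{m+an}$. But the decisive step is never proved: that translation averages along a mean-zero random walk violate every weak $(p,p)$ bound. The two mechanisms you name cannot supply it.

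\emph{The obstruction theorem does not apply.} Theorem~\ref{cor:Bourgain-homomorphism-obstruction}, and Bourgain's Proposition~10 behind it, require $\pi(\lambda_n)\to b$, i.e.\ a null sequence of shifts. Here $\pi(\lambda_{n+1})-\pi(\lambda_n)=\theta_{u_{n+1}}$, and almost surely every letter recurs. So convergence forces all $\theta_j\equiv 0 \pmod 1$, hence $\pi(\lambda_n)\equiv b$, which that theorem excludes. Letting $\varepsilon\to0$ with the uniform constant of Theorem~\ref{cor:homomorphism-criteria}(i) does not create convergence. After rescaling it at best restates the question: is the translation maximal operator along the real-valued walk $\mathbf a\cdot\mathbf w_k$ weak $(p,p)$? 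That is exactly what has to be refuted.

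\emph{Local-time concentration is not an obstruction.} Consider the deterministic sequence $n_k=\lfloor\sqrt k\rfloor$. It also puts a fixed proportion of its first $N$ terms on $O(\sqrt N)$ values, each taken $\asymp\sqrt N$ times. Yet $\frac1N\sum_{k\le N}|g|(z+n_k)$ is dominated by a constant times $\frac1{M+1}\sum_{m=0}^{M}|g|(z+m)$ with $M=\lfloor\sqrt N\rfloor$, so it satisfies a weak $(1,1)$ inequality. Whatever makes the random walk bad is a subtler stochastic feature, and you verify no entropy or sweeping-out condition.

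\emph{What the paper uses instead.} The missing ingredient is a genuine sweeping-out theorem for mean-zero walks, which the paper imports from Lacey--Petersen--Wierdl--Rudolph. For a.e.\ path $(w_n)$ and every $\varepsilon>0$, there are a flow $(U_t)$ and a set $E$ with $\mu(E)<\varepsilon$ such that $\sup_N\frac1N\sum_{n\le N}\one_E(U_{w_n}x)>\frac12$ a.e. The paper's argument then runs:
\begin{itemize}
\item it projects the assumed discrete inequality to $\mathbb Z^2$ by Lemma~\ref{lem:quotient-weak-pp};
\item it transfers it to the action $\beta^{(m,n)}=U_{m+an}$, with $a=-r_1/(1-r_1)$, via the implication (ii)$\Rightarrow$(i) of Theorem~\ref{thm:finite-transfer};
\item it tests the result on $\one_E$, obtaining $1\le C2^p\mu(E)$, a contradiction.
\end{itemize}
The implication (ii)$\Rightarrow$(i) needs no tower property, so any flow works, not only circle rotations. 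The almost-sure quantifier over all $p$ is handled by intersecting over integer $p$ and using $L^q\subset L^p$ for $q\ge p$.

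Without an input of this kind, your argument stops exactly where the content of the theorem lies.
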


{To prove Theorem \ref{thm:random-symbolic-non-Linfty}, we  project the involved Bernoulli random walk on $\mathbb{N}_0^d$   to a two-dimensional Bernoulli random walk on $\mathbb{Z}^2$. Then we combine the sweeping-out result of Lacey--Petersen--Wierdl--Rudolph \cite[Theorem~4 and the subsequent remarks]{Lacey1994} with the  transference principle (Theorem \ref{thm:finite-transfer}) to prove the failure of the corresponding weak $(p,p)$ maximal inequality for the translation.}

{\em Organization of the paper.}\ \ 
{In Section~\ref{sec:characterization-weak-pp}, we characterize $L^p$-Khinchin sequences by weak $(p,p)$ maximal inequalities. In Section~\ref{sec:Lp-transference}, a transference principle for measure-preserving $\mathbb N_0^d$-actions and the translation models on $\mathbb N_0^d$ and $\mathbb Z^d$ is proved and an extension to actions of countably generated semigroups and groups is also discussed with semigroup $\mathbb N_0^{(\mathbb N)}$ and 
group $\mathbb Z^{(\mathbb N)}$ as models. Section~\ref{sec:proofs-main-criteria} derives the discrete criterion and circle criterion for $L^p$-Khinchin sequences. Section~\ref{sec:proofs-finite-generated} studies sequences contained in a finitely generated semigroup, and Section~\ref{sec:multiplicative-symbolic} studies multiplicative sequences.}

{\em Acknowledgements.}
The authors would like to thank Christoph Aistleitner and Jean-Pierre Conze for helpful discussions and valuable comments on this work. 

\section{Characterization of \texorpdfstring{\(L^p\)}{Lp}-Khinchin sequences by a weak \((p,p)\) maximal inequality}
\label{sec:characterization-weak-pp}
In this section, we characterize the \(L^p\)-Khinchin sequences  by 
a weak \((p,p)\) maximal inequality for the Khinchin
operators when \(1\le p<\infty\), using the Banach principle and the
Sawyer principle.  

\subsection{{Banach's principle} and Sawyer's principle}
Let us first recall the Banach principle and the Sawyer principle for general operators  
\cite{Banach1926Convergence,Sawyer1966}.
Let \((X,\mathcal B,\mu)\) be a measure space and let
\((L_n)_{n\ge1}\) be a sequence of operators on \(L^p(X)\),  \(1\le p<\infty\). Define the maximal operators $L^*$ and $L^*_N$ by
\[
L^*f(x):=\sup_{n\ge1}|L_n f(x)|,
\qquad
L_N^*f(x):=\sup_{1\le n\le N}|L_n f(x)| \ \ \ (N\ge 1).
\]
We say that $(L_n)$ or \(L^*\) satisfies a weak \((p,p)\) maximal inequality if there
exists a constant \(C>0\) such that
\[
\mu\{x\in X:L^*f(x)>t\}
\le
\frac{C}{t^p}\|f\|_{L^p(X)}^p
\]
for every \(f\in L^p(X)\) and every \(t>0\). Equivalently, for the 
same constant $C$,
\[
\mu\{x\in X:L_N^*f(x)>t\}
\le
\frac{C}{t^p}\|f\|_{L^p(X)}^p
\]
for every $N \ge 1$, every \(f\in L^p(X)\) and every \(t>0\).
The following Banach principle is fundamental {for studying} the almost everywhere convergence of $L_nf(x)$.

\begin{theorem}[{Banach's principle \cite{Banach1926Convergence}}]\label{thm:banach-principle}
Let \((X,\mathcal B,\mu)\) be a measure space and  \(1\le p<\infty\).  Let
\((L_n)_{n\ge1}\) be a sequence of bounded linear operators on \(L^p(X)\). Suppose 
 $L^*f(x) < \infty$ a.e. for every $f \in L^p(X)$.  Then the subspace
\[
\left\{
f\in L^p(X):\lim_{n\to\infty}L_n f(x)\text{ exists for a.e. }x\in X
\right\}
\]
is a closed subspace of \(L^p(X)\).
\end{theorem}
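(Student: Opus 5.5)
The plan is the classical argument: first show that the limit exists on a dense set, then show that the set of such $f$ is closed. Let
\[
\mathcal C:=\{f\in L^p(X):\lim_n L_nf(x)\text{ exists a.e.}\}.
\]
This set is clearly a linear subspace, so only closedness needs proof.

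\textbf{Step 1: a maximal inequality in measure.} The key step is to turn the qualitative hypothesis $L^*f<\infty$ a.e. into a quantitative continuity statement. For each $\epsilon>0$ I aim to find $\delta>0$ and $M<\infty$ such that
\[
\mu\{L^*f>M\}<\epsilon \quad\text{whenever } \|f\|_p\le\delta .
\]
Suppose first that $\mu$ is finite. For $M\in\mathbb N$ set
\[
F_M:=\{f\in L^p:\ \mu\{L_N^*f>M\}\le\epsilon \ \text{for all } N\}.
\]
Each $L_N^*$ is a finite supremum of bounded operators, so $f\mapsto \mu\{L_N^*f>M\}$ is lower semicontinuous along $L^p$-convergent sequences (pass to an a.e. convergent subsequence and use Fatou). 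Hence $F_M$ is closed.

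Since $L^*f<\infty$ a.e. and $\mu$ is finite, $\mu\{L^*f>M\}\to0$ as $M\to\infty$. Therefore $\bigcup_M F_M=L^p$. By the Baire category theorem, some $F_{M_0}$ contains a ball $B(f_0,r)$.

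For $\|h\|_p<r$, write $h=(f_0+h)-f_0$. Sublinearity gives
\[
\mu\{L^*h>2M_0\}\le 2\epsilon .
\]
Scaling then handles arbitrary small norms. If $\mu$ is only $\sigma$-finite, I would run the same argument with $\mu$ replaced by an equivalent finite measure; it is enough to control the measure of the bad sets in that measure. This Baire step is the main obstacle, mainly in checking closedness of $F_M$ carefully and handling the non-finite measure.

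\textbf{Step 2: closedness.} Take $f_j\in\mathcal C$ with $f_j\to f$ in $L^p$. Define the oscillation
\[
\Omega g(x):=\limsup_{m,n}|L_ng(x)-L_mg(x)|\le 2L^*g(x).
\]
Because $\Omega$ is sublinear and $\Omega f_j=0$ a.e., we get $\Omega f\le\Omega(f-f_j)\le 2L^*(f-f_j)$.

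Fix $\eta>0$ and $\epsilon>0$. The target is $\mu\{\Omega f>\eta\}<\epsilon$, since letting $\epsilon\to0$ and then $\eta\to0$ gives $\Omega f=0$ a.e., i.e. $f\in\mathcal C$.

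Step 1 gives $\delta,M$ with $\mu\{L^*g>M\}<\epsilon$ for $\|g\|_p\le\delta$. Scale by setting $g=\lambda(f-f_j)$ with $\lambda=2M/\eta$, and choose $j$ large enough that $\|g\|_p\le\delta$. Then
\[
\{\Omega f>\eta\}\subseteq\{2L^*(f-f_j)>\eta\}=\{L^*g>M\},
\]
so $\mu\{\Omega f>\eta\}<\epsilon$. This completes the argument.
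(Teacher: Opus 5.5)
The paper does not prove this statement: it quotes Banach's principle from \cite{Banach1926Convergence} as a standard tool, so there is no in-paper argument to compare with. Your proof is the classical one and it is correct. You apply Baire category to the closed sets $F_M$ to get continuity in measure of $L^*$ at $0$, and then use the oscillation bound $\Omega f\le 2L^*(f-f_j)$. The individual steps check out:
\begin{itemize}
\item closedness of $F_M$, via an a.e.\ convergent subsequence and Fatou;
\item the passage from $L_N^*$ to $L^*$ by monotone convergence;
\item the positive homogeneity of $L^*$ behind the scaling $g=\lambda(f-f_j)$.
\end{itemize}

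Two small remarks. First, your opening sentence about showing the limit exists on a dense set is not part of this theorem. In the paper, density is supplied separately (continuous functions, via Weyl's criterion), and your argument in fact only proves closedness, which is what is asked. Second, the statement allows an arbitrary measure space, while your reduction via an equivalent finite measure covers only the $\sigma$-finite case. The fix is easy:
\begin{itemize}
\item fix $f$ and $(f_j)$ first;
\item let $S$ be a $\sigma$-finite set outside of which all of $f, f_j, L_nf, L_nf_j$ vanish a.e.;
\item run your Baire argument with the bad sets measured by a finite measure $\nu$ on $S$ equivalent to $\mu|_S$, keeping the $L^p(\mu)$ norm on the Banach space.
\end{itemize}
Nothing else changes.

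It is also worth noticing how the paper uses this principle. In every application (Theorem~\ref{thm:Lp-khinchin-maximal} and Theorem~\ref{cor:Bourgain-homomorphism-obstruction}), a weak $(p,p)$ maximal inequality is already available. That inequality gives your Step~1 immediately, as $\mu\{L^*g>M\}\le CM^{-p}\|g\|_p^p$, so only your Step~2 is needed there. Your Baire step is exactly what lets the principle work under the weaker, purely qualitative hypothesis $L^*f<\infty$ a.e.
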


If $(L_n)_{n \ge 1}$ satisfies a weak-$(p,p)$ maximal inequality, then
the finiteness condition on $L^*f(x)$ in Theorem \ref{thm:banach-principle} is satisfied.
The converse assertion is also true under suitable conditions. This is 
Sawyer's strengthened Banach principle. 
Let \(\mathcal F\) be a family of measure-preserving maps, not necessarily
invertible, of a probability space \((X,\mathcal B,\mu)\). We say that  
\(\mathcal F\) is ergodic if every \(\mathcal F\)-invariant measurable set has
measure \(0\) or \(1\). We say that \(\mathcal F\) commutes with \((L_n)\) if,
for every \(T\in\mathcal F\) and every \(f\in L^p(X)\), writing \(g=f\circ T\),
one has
\[
L^*f(Tx)\le L^*g(x)
\qquad\text{for a.e. }x\in X.
\]
Recall that an operator \(L:L^p(X)\to L^p(X)\) is \emph{positive} if \(Lf\ge0\)
a.e. whenever \(f\ge0\) a.e. The following Sawyer principle {strengthens} the Banach principle under extra assumptions.

\begin{theorem}[{Sawyer \cite[Theorem~2]{Sawyer1966}}]\label{Sawyer}
Let \((X,\mathcal B,\mu)\) be a probability space and \(1\le p<\infty\). Let
\((L_n)_{n\ge1}\) be positive bounded linear operators on \(L^p(X)\). Assume
that
\begin{enumerate}
\item[\textup{(1)}] \(L^*f(x)<\infty\) for a.e. \(x\in X\), for every
\(f\in L^p(X)\);
\item[\textup{(2)}] there exists an ergodic family \(\mathcal F\) of
measure-preserving maps of \(X\) which commutes with \((L_n)\). 
\end{enumerate}
Then \(L^*\) satisfies a weak \((p,p)\) maximal inequality.
\end{theorem}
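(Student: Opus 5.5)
The plan is to argue by contradiction, combining positivity, the commutation hypothesis, and a covering argument based on ergodicity. First, I would reduce to nonnegative functions. Positivity gives $|L_nf|\le L_n|f|$ almost everywhere, so $L^*f\le L^*|f|$, and it suffices to prove the weak $(p,p)$ inequality for $f\ge0$. Next, I would assume that the inequality fails. Then for every $k\ge1$ there are $f_k\ge0$ and $t_k>0$ such that
\[
\mu(E_k)\ \ge\ 4^{k}\,t_k^{-p}\|f_k\|_p^p,\qquad E_k:=\{L^*f_k>t_k\}.
\]
Replacing $f_k$ by $f_k/t_k$, we may take $t_k=1$. Replacing $f_k$ by a suitable truncation or restriction, I would also arrange that $\mu(E_k)=\delta_k$ is as small as needed, and I would record the ratio $\|f_k\|_p^p\le 4^{-k}\delta_k$.

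The second step is to spread $E_k$ over most of $X$ using the ergodic family $\mathcal F$. I would aim for a covering lemma of the following form. For every measurable $E$ with $\mu(E)=\delta>0$, there are $T_1,\dots,T_m$ in $\mathcal F$, or in the semigroup it generates (which still commutes with $(L_n)$ by iterating the commutation inequality), such that
\[
\mu\Bigl(\bigcup_{i=1}^m T_i^{-1}E\Bigr)\ge \tfrac12
\qquad\text{and}\qquad
m\le C_0/\delta
\]
with an absolute constant $C_0$. Ergodicity guarantees that the union of all $T^{-1}E$ over the semigroup has full measure, since it is an invariant set of positive measure. The difficulty is the quantitative bound on $m$. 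I would try a greedy selection: while the current union $A$ has $\mu(A)<1/2$, pick $T$ with $\mu(T^{-1}E\setminus A)\ge c\,\delta$. To make such a choice possible, I would first partition $E$ into many tiny pieces and run the greedy step on the pieces, so that each step gains an amount proportional to the mass of the pieces used.

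Given the covering lemma, I would apply it to each $E_k$ to obtain $T_{k,1},\dots,T_{k,m_k}$, and set
\[
f:=\sum_{k\ge1} 2^{k/p}\sum_{i=1}^{m_k} f_k\circ T_{k,i}.
\]
Since each $T_{k,i}$ preserves $\mu$, we have $\|f_k\circ T_{k,i}\|_p=\|f_k\|_p$. Hence, by Minkowski's inequality,
\[
\|f\|_p\le\sum_k 2^{k/p}\, m_k\|f_k\|_p .
\]
This is finite provided the decay $\|f_k\|_p^p\le 4^{-k}\delta_k$ is strengthened to beat $m_k\le C_0/\delta_k$. I would arrange this by choosing the failure constants $C_k$ growing fast enough (say $C_k$ with $\sum_k 2^{k/p}C_0\,\delta_k^{1-1/p}C_k^{-1/p}<\infty$) and by shrinking $\delta_k$ if necessary. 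For $p=1$ the estimate $\sum_k 2^{k}C_0C_k^{-1}<\infty$ is direct.

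Finally, since everything is nonnegative and $L_n$ is positive, $L^*f\ge 2^{k/p}L^*(f_k\circ T_{k,i})$ for each $k$ and $i$. The commutation hypothesis gives $L^*(f_k\circ T)(x)\ge (L^*f_k)(Tx)$, so $L^*f>2^{k/p}$ on $T_{k,i}^{-1}E_k$. Thus $L^*f>2^{k/p}$ on a set of measure at least $1/2$ for every $k$, which forces $L^*f=\infty$ on a set of measure at least $1/2$. This contradicts hypothesis (1).

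The main obstacle is the quantitative covering lemma with $m\lesssim 1/\mu(E)$. Ergodicity alone only gives a qualitative covering. The step I would work hardest on is therefore the greedy selection with subdivided pieces, checking that it yields a uniform gain per step.
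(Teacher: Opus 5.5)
The paper quotes this result from Sawyer without proof, so your argument has to stand on its own. Your outline is sound: argue by contradiction, spread the bad sets with the ergodic family, glue rescaled copies, and contradict (1). For $p=1$ it closes once the covering lemma is proved. There are two genuine gaps, though.

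\textbf{First gap: the norm estimate fails for $1<p<\infty$.} Minkowski's inequality applied to $\sum_i f_k\circ T_{k,i}$ only gives
\[
m_k\|f_k\|_p\le \frac{C_0}{\delta_k}\Bigl(\frac{\delta_k}{C_k}\Bigr)^{1/p}=C_0\,\delta_k^{-(1-1/p)}\,C_k^{-1/p}.
\]
The exponent of $\delta_k$ is negative; you wrote $\delta_k^{1-1/p}$. So shrinking $\delta_k$ makes things worse, not better. Moreover, $\delta_k=\mu(E_k)$ comes out of the counterexample only after $C_k$ has been fixed, and it has no lower bound. Hence no choice of $C_k$ forces the series to converge. (The truncation step that makes $\delta_k$ small is also unjustified, but it is not needed.)

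The repair is to use positivity through the lattice structure: replace the sum by
\[
f:=\sup_{k\ge1}\ \max_{1\le i\le m_k}\ 2^{k/p}\, f_k\circ T_{k,i}.
\]
Since $0\le 2^{k/p}f_k\circ T_{k,i}\le f$ and the $L_n$ are positive, your final paragraph goes through verbatim. Meanwhile $(\sup_j g_j)^p\le\sum_j g_j^p$ gives
\[
\|f\|_p^p\le\sum_k 2^k m_k\|f_k\|_p^p\le\sum_k 2^k\,\frac{C_0}{\delta_k}\,4^{-k}\delta_k=C_0\sum_k 2^{-k}<\infty,
\]
with no condition on $\delta_k$ at all. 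This is exactly where positivity is essential for $p>1$; without it, the random-sign argument only reaches $p\le 2$.

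\textbf{Second gap: the covering lemma is true, but your mechanism does not prove it.} Subdividing $E$ into small pieces cannot produce a gain proportional to $\mu(E)$. Qualitative ergodicity gives no lower bound on $\mu(T^{-1}P\cap B)$, and each selected $T$ costs a full copy of $f_k$ whichever piece it moves.

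The missing input is a mean-ergodic fact. Let $\mathcal S$ be the semigroup generated by $\mathcal F$ and the identity, and set $U_Th=h\circ T$. Let $K$ be the $L^2(\mu)$-closed convex hull of $\{U_T\mathbf{1}_E:T\in\mathcal S\}$.
\begin{itemize}
\item Each isometry $U_S$ maps $K$ into $K$, because $U_SU_T\mathbf 1_E=U_{T\circ S}\mathbf 1_E$.
\item Hence the unique element of minimal norm in $K$ is fixed by every $U_S$.
\item By ergodicity it is therefore constant, and since $\int U_T\mathbf 1_E\,d\mu=\mu(E)$ for all $T$, it equals $\mu(E)$.
\item Pairing with $\mathbf{1}_B$ yields, for every measurable $B$,
\[
\sup_{T\in\mathcal S}\mu(T^{-1}E\cap B)\ge\mu(E)\,\mu(B).
\]
\end{itemize}
Now run your greedy selection with $B=A^c$. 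While $\mu(A)<1/2$, there is $T\in\mathcal S$ with $\mu(T^{-1}E\setminus A)\ge\delta/4$. So the process stops after $m\le 2/\delta+1\le 3/\delta$ steps with $\mu(A)\ge 1/2$.

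With this lemma and the supremum in place of the sum, your proof works for all $1\le p<\infty$.
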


We are going to  use the preceding two principles to study the almost everywhere convergence of Khinchin averages $\Lambda_nf(x)$.
We first mention a result on the 
\(L^p\)-convergence of \((\Lambda_n f)\), which will allow us to identify the limit of \(\Lambda_nf\).

\begin{proposition}[\!\!{\cite[Theorem~3.1]{FFQQ}}] \label{prop:Lp-mean-convergence}
Let \(1\le p<\infty\).  Let $\Lambda=(\lambda_n)_{n\ge1}$ be a sequence of  distinct positive
integers. 
Then for every \(f\in L^p(\mathbb T)\) we have
\[
\frac{1}{n} \sum_{k=1}^n f(\lambda_k x )\longrightarrow \int_{\mathbb T}f\,d\mathfrak m
\qquad\text{in }L^p(\mathbb T).
\]
\end{proposition}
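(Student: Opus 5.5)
The plan is to reduce mean convergence in \(L^p\) to trigonometric polynomials by density, and to handle a single character by the distinctness of the \(\lambda_k\).

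\textbf{Uniform bound.} Since each \(T_{\lambda_k}\) preserves \(\mathfrak m\), we have \(\|f(\lambda_k\cdot)\|_p=\|f\|_p\). By the triangle inequality, \(\|\Lambda_nf\|_p\le\|f\|_p\) for all \(n\). Thus the operators \(\Lambda_n-\int\cdot\,d\mathfrak m\) are uniformly bounded on \(L^p(\mathbb T)\), with norm at most \(2\).

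\textbf{Characters.} Write \(e_j(x)=e^{2\pi i jx}\). For \(j=0\) the statement is trivial. For \(j\neq0\), note that \(e_j(\lambda_kx)=e_{j\lambda_k}(x)\). The frequencies \(j\lambda_k\), \(k=1,\dots,n\), are distinct nonzero integers because the \(\lambda_k\) are distinct. By orthonormality,
\[
\Bigl\|\frac1n\sum_{k=1}^n e_{j\lambda_k}\Bigr\|_2=n^{-1/2}\to0 .
\]
For \(1\le p\le2\) this gives \(L^p\)-convergence to \(0=\int e_j\,d\mathfrak m\), since \(\|\cdot\|_p\le\|\cdot\|_2\) on the probability space \(\mathbb T\). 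For \(2<p<\infty\) we interpolate, using that the average is bounded by \(1\) in sup norm:
\[
\|h\|_p^p\le\|h\|_\infty^{p-2}\|h\|_2^2\le n^{-1}.
\]
By linearity, \(\Lambda_nP\to\int P\,d\mathfrak m\) in \(L^p\) for every trigonometric polynomial \(P\).

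\textbf{Density.} Trigonometric polynomials are dense in \(L^p(\mathbb T)\) for \(1\le p<\infty\). Given \(f\) and \(\varepsilon>0\), choose \(P\) with \(\|f-P\|_p<\varepsilon\). Then
\[
\|\Lambda_nf-\textstyle\int f\|_p\le 2\varepsilon+\|\Lambda_nP-\textstyle\int P\|_p,
\]
so \(\limsup_n\|\Lambda_nf-\int f\|_p\le2\varepsilon\).

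There is no real obstacle here. The only point needing care is the case \(p>2\), where the \(L^2\) estimate must be upgraded using the uniform \(L^\infty\) bound as above.
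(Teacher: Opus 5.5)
Your proof is correct and complete. The paper gives no proof of this proposition. It is quoted from \cite[Theorem~3.1]{FFQQ} and used only to identify the almost-everywhere limit in the proof of Theorem~\ref{thm:Lp-khinchin-maximal}, so there is no in-paper argument to compare with.

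Your argument is the standard self-contained proof and needs no repair. It has three ingredients:
\begin{enumerate}
\item the exact computation $\bigl\|n^{-1}\sum_{k\le n} e_{j\lambda_k}\bigr\|_2 = n^{-1/2}$, which comes from the distinctness of the nonzero frequencies $j\lambda_k$;
\item the upgrade to $p>2$ via $\|h\|_p^p\le\|h\|_\infty^{p-2}\|h\|_2^2$, using $\|h\|_\infty\le 1$;
\item density of trigonometric polynomials, combined with the uniform bound $\bigl\|\Lambda_n f-\int_{\mathbb T} f\,d\mathfrak m\bigr\|_p\le 2\|f\|_p$, which follows from the invariance of $\mathfrak m$ under $x\mapsto\lambda_k x$.
\end{enumerate}
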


\subsection{\texorpdfstring{\(L^p\)}{Lp}-Khinchin sequences $(1\le p<\infty)$  and weak-\texorpdfstring{\((p,p)\)}{(p,p)} maximal inequality}
\label{section:BPforKS}

We are now ready to present the following characterization {of $L^p$-Khinchin sequences by a weak $(p,p)$ maximal inequality}.
\begin{theorem}\label{thm:Lp-khinchin-maximal}
Let \(1\le p<\infty\). Let
\(\Lambda=(\lambda_n)_{n\ge1}\) be a sequence of distinct positive integers.
Then the following assertions are equivalent:
\begin{enumerate}
\item[\textup{(i)}] \(\Lambda\) is an \(L^p\)-Khinchin sequence;
\item[\textup{(ii)}] there exists a constant \(C>0\) such that, for every
\(f\in L^p(\mathbb T)\) and every \(t>0\),
  \begin{equation}\label{eq:WPMI2}
   \mathfrak{m} \left\{ x \in \mathbb{T} :  \sup_{n \ge 1} |\Lambda_n f(x)| >t \right\} \le \frac{C}{t^p} \|f\|_p^p.
  \end{equation}
\end{enumerate}
\end{theorem}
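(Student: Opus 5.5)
For (ii)$\Rightarrow$(i) we combine the Banach principle (Theorem~\ref{thm:banach-principle}) with a dense class on which convergence is already known. Each $\Lambda_n$ is a bounded linear operator on $L^p(\mathbb T)$, indeed a contraction, since $x\mapsto\lambda_kx$ preserves $\mathfrak m$. The weak $(p,p)$ inequality \eqref{eq:WPMI2} gives $\Lambda^*f<\infty$ a.e. for every $f\in L^p$. Hence the set of $f$ for which $\lim_n\Lambda_nf(x)$ exists a.e. is a closed subspace $V$ of $L^p$.

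Trigonometric polynomials lie in $V$. For $f(x)=e^{2\pi i hx}$ with $h\ne0$, the averages $\frac1n\sum_k e^{2\pi i h\lambda_kx}$ tend to $0$ a.e., because $(\lambda_kx)$ is uniformly distributed for a.e. $x$ (Weyl). By linearity, every trigonometric polynomial converges a.e. to its mean. These are dense in $L^p$ for $p<\infty$, so $V=L^p$.

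It remains to identify the limit. Proposition~\ref{prop:Lp-mean-convergence} gives $\Lambda_nf\to\int f\,d\mathfrak m$ in $L^p$. Some subsequence then converges a.e. to $\int f$, so the a.e. limit equals $\int f\,d\mathfrak m$. This proves (i).

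For (i)$\Rightarrow$(ii) we apply Sawyer's principle (Theorem~\ref{Sawyer}), with $X=\mathbb T$ and $L_n=\Lambda_n$. Each $\Lambda_n$ is positive, linear and bounded. Condition (1) holds because, under (i), $\Lambda_nf$ converges a.e., so $\sup_n|\Lambda_nf|<\infty$ a.e.

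For the commuting family in (2) we take $\mathcal F=\{T_a:a\in\mathbb N\}$ with $T_ax=ax$, or just a single $T_2$. Each $T_a$ preserves $\mathfrak m$, and $T_2$ is ergodic, so the family is ergodic. Commutation holds exactly: setting $g=f\circ T_a$,
\[
\Lambda_nf(T_ax)=\frac1n\sum_k f(\lambda_k a x)=\frac1n\sum_k g(\lambda_kx)=\Lambda_ng(x),
\]
because multiplications commute. Hence $\Lambda^*f(T_ax)=\Lambda^*g(x)$, and Sawyer's theorem gives \eqref{eq:WPMI2}.

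The only points needing care are well-definedness and measurability. The operators act on equivalence classes, and $f\circ T_a$ is well defined a.e. because $T_a$ is measure-preserving. Measurability of $\Lambda^*f$ follows because it is a countable supremum. The step I expect to require the most attention is checking that the definition of ``commutes'' used in Theorem~\ref{Sawyer} matches the version stated in the paper. Here equality holds pointwise, so there is no real obstacle, and the theorem is essentially a direct assembly of the cited principles.
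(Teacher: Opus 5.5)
Your proposal is correct and follows the paper's proof essentially step for step. For (i)$\Rightarrow$(ii) you use Sawyer's principle with the ergodic doubling map, which commutes exactly with the $\Lambda_n$; for (ii)$\Rightarrow$(i) you use the Banach principle plus Weyl's theorem on a dense class, identifying the limit via Proposition~\ref{prop:Lp-mean-convergence}. The only cosmetic differences are that you take trigonometric polynomials rather than continuous functions as the dense class, and that you spell out the subsequence argument identifying the a.e.\ limit with $\int_{\mathbb T} f\,d\mathfrak m$.
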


\begin{proof}
 \((\textup{i})\Rightarrow(\textup{ii})\). Observe that each operator
\(f(x)\mapsto f(\lambda_k x)\) is a linear positive contraction on
\(L^p(\mathbb T)\). Hence the Khinchin operators \((\Lambda_n)_{n\ge1}\) are
positive linear contractions. Moreover, they commute, in Sawyer's sense as recalled above, with
the ergodic endomorphism \(Sx=2x\bmod 1\). If \(\Lambda\) is an
\(L^p\)-Khinchin sequence, then \(\Lambda^*f<\infty\) a.e. for every
\(f\in L^p(\mathbb T)\). A weak \((p,p)\) maximal inequality follows from Sawyer's principle
(see Theorem~\ref{Sawyer}).

 \((\textup{ii})\Rightarrow(\textup{i})\).  Assume \((\textup{ii})\), which implies
\(\Lambda^*f(x)<\infty\) a.e. for every \(f\in L^p(\mathbb T)\).  By the Banach principle, the set of
\(f\in L^p(\mathbb T)\) for which \(\Lambda_n f(x)\) converges a.e. is closed
in \(L^p(\mathbb T)\). Since this convergence holds for continuous functions by Weyl's criterion,
$\Lambda_nf(x)$ converges a.e. for every \(f\in L^p(\mathbb T)\). Proposition~\ref{prop:Lp-mean-convergence}
identifies the \(L^p\)-limit as the constant
\(\int_{\mathbb T}f\,d\mathfrak m\). Therefore the a.e. limit is equal to \(\int_{\mathbb T}f\,d\mathfrak m\).
Thus \(\Lambda\) is an \(L^p\)-Khinchin sequence.
\end{proof}

\section{$L^p$-transference for $\mathbb{N}_0^{d}$-actions and  $\mathbb{N}_0^{(\mathbb N)}$-actions}
\label{sec:Lp-transference}

{
In this section, we  prove a more general result which transfers the weak $(p,p)$ maximal inequality for semigroup actions on a probability space to weak $(p,p)$ maximal inequalities  for translation on the semigroup $\mathbb{N}_0^{(\mathbb N)}$ or for the translation on the group $\mathbb{Z}^{(\mathbb N)}$, where $1\le p<\infty$.}
The proof of Theorem \ref{thm:Lp-main-criterion} will be based on it. This transference also serves as a useful tool in the proofs of some other results in the paper.

The idea of transference comes from Calder\'on \cite{Calderon1968}.
The Calder\'{o}n transference principle is a method for transferring operator bounds from a classical translation setting, such as $\mathbb{R}$ or $\mathbb{Z}$, to operators built from a measure-preserving action on another measure space. 
The point was precisely to move weak and strong $L^p$
 bounds for translation-invariant operators to the corresponding ergodic operators. Coifman–Weiss \cite{CoifmanWeiss1976} later developed this into a general transference framework for group actions.
 The mechanism is usually a Fubini argument plus invariance of measure under the action. The Calder\'{o}n transference principle  allows us to pass from harmonic-analysis or arithmetical inequalities to ergodic-theoretic ones.
 To study our {Khinchin} operators, we adapt the Calder\'{o}n transference principle to actions of the semigroup $\mathbb{N}_0^{(\mathbb N)}$.
 
As we mentioned in the introduction, the action of the multiplicative semigroup of positive integers 
$\mathbb{N}$ on the Lebesgue space $(\mathbb{T}, \mathcal{B}(\mathbb{T}), \mathfrak m)$ is an action by the {semigroup} $\mathbb{N}_0^{(\mathbb{N})}$.
This semigroup also additively acts on the semigroup itself and the group $\mathbb{Z}^{(\mathbb{N})}$.

Each Khinchin operator $\Lambda_n$ is defined by a finite number of integers $\lambda_1, \cdots, \lambda_n$ which are contained in a finitely generated {subsemigroup} of $\mathbb{N}$.
So, 
{every finite collection of Khinchin operators involves only  an action of $\mathbb N_0^d$ for some finite $d$.}
We first consider the semigroup \(\mathbb N_0^d\).  Then there are  three actions to
compare: a measure-preserving \(\mathbb N_0^d\)-action on a probability
space, translation action of \(\mathbb N_0^d\) on itself, 
translation action of \(\mathbb N_0^d\) on the group \(\mathbb Z^d\).
To ensure that we have three equivalent weak $(p,p)$ maximal inequalities, we will need a  finite
tower property for the measure-preserving action. Fortunately, this property is ensured  by  a Rokhlin-type lemma  due to Avila--Candela \cite{Avila_Candela2016} for
free \(\mathbb N_0^{d}\)-actions.  

\subsection{\texorpdfstring{\(\mathbb N_0^d\)}{N0d}-actions,  \(\mathbb N_0^{(\mathbb{N})}\)-actions and notation}
We now consider a general \(\mathbb N_0^d\)-action  (\(d\ge1\)).   Let \(Z\) be an arbitrary set and let
\[
\alpha:\mathbb N_0^d\curvearrowright Z
\]
be an action on \(Z\).  Thus \(\alpha^{\mathbf 0}\) is the identity, $\alpha^\mathbf{n}$ maps $Z$ into $Z$ for every $\mathbf{n} \in \mathbb{N}_0^d$ and
\(\alpha^{\mathbf m+\mathbf n}=\alpha^{\mathbf m}\alpha^{\mathbf n}\) for $\mathbf{m}, \mathbf{n} \in \mathbb{N}_0^d$.
Let $J\ge 1$. Given \(J\) finitely supported weights
\[
\nu_j:\mathbb N_0^d\longrightarrow\mathbb C,
\qquad 1\le j\le J,
\]
we define the corresponding weighted averaging operators, for functions
\(f:Z\to\mathbb C\), by
\begin{equation}\label{eq:general-weighted-average}
A_j^\alpha f(z)
:=\sum_{\mathbf n\in\mathbb N_0^d}
\nu_j(\mathbf n)f(\alpha^{\mathbf n}z),
\qquad 1\le j\le J.
\end{equation}
The associated finite maximal operator is
\[
{A_J^{\alpha,*}f(z):=\max_{1\le j\le J}|A_j^\alpha f(z)|.}
\]
If we are given infinitely many weights $(\nu_j)_{j\ge  1}$, we define the maximal function
$$A^*f(z) = \sup_{j\ge 1}|A_j^\alpha f(z)|.
$$ 

We call  $A^{\alpha, *}f$ the {\em maximal function of $f$ under the action \(
\alpha:\mathbb N_0^d\curvearrowright Z
\) along $(\nu_j)_{j\ge 1}$}, while $A_J^{\alpha,*}f$ is called a {\em truncated maximal function}.

Sometimes, we also consider a general \(\mathbb N_0^{(\mathbb{N})}\)-action  on \(Z\):
\[
\alpha:\mathbb N_0^{(\mathbb{N})}\curvearrowright Z
\]
We consider $\mathbb N_0^d$ as a subsemigroup of \(\mathbb N_0^{(\mathbb{N})}\). Then, when the weight $\nu_j$ is supported on some $\mathbb N_0^d$
($d$ depending on $j$),
we can still formally define $A_j^\alpha f(z)$
 by \eqref{eq:general-weighted-average}.
 If we are given infinitely many weights $(\nu_j)_{j\ge  1}$ each of which is finitely supported, we also define the maximal function
$A^*f(z) = \sup_{j\ge 1}|A_j^\alpha f(z)|
$, called  the {\em maximal function of $f$ under the action \(
\alpha:\mathbb N_0^{(\mathbb{N})}\curvearrowright Z
\) along $(\nu_j)_{j\ge 1}$}.

{\bf Remark.} 
We emphasize that when we consider an $\mathbb N_0^d$-action, we assume that all weights $\nu_j:\mathbb N_0^d\to\mathbb C$ are finitely supported.
When we consider an $\mathbb N_0^{(\mathbb{N})}$-action, we assume that each weight 
 $\nu_j$ is supported on $\mathbb{N}_0^d$ for some $d$ depending on $j$, so that the operators $A_j$ $(1\le j\le J)$, as well as  $A_J^*$, only depend on
 an  $\mathbb N_0^{d_J}$-action for some integer $d_J$ sufficiently large. Therefore the study of  $\mathbb N_0^{(\mathbb{N})}$-actions   essentially reduces to 
 the study of  $\mathbb N_0^{d}$-actions for different $d$'s.
 
 Attention: the weights are not indicated in the notations $A_J^{\alpha, *}f$ and $A^{\alpha, *}f$. 
We should point out that we are interested in the partial behavior of the action  in the union of the supports of the weights, which is not the whole semigroup
$\mathbb N_0^d$ or $\mathbb N_0^{(\mathbb{N})}$ in general. The operators $A_J^{\alpha, *}f$ and $A^{\alpha, *}f$ are determined by the weights, which are usually fixed. 
 
 \color{black}

We will consider three $\mathbb{N}_0^d$-actions. 

{\em {$\mathbb{N}_0^d$-action on $\mathbb{N}_0^d$}}. The first action is the translation of \(\mathbb N_0^d\) on itself:
\[
\tau_S^{\mathbf n}\mathbf m=\mathbf m+\mathbf n
\qquad(\mathbf m,\mathbf n\in\mathbb N_0^d).
\]

{\em {$\mathbb{N}_0^d$-action on $\mathbb{Z}^d$}}. The second is the translation of \(\mathbb N_0^d\) on the
group \(\mathbb Z^d\):
\[
\tau_G^{\mathbf n}\mathbf z=\mathbf z+\mathbf n
\qquad(\mathbf n\in\mathbb N_0^d,\ \mathbf z\in\mathbb Z^d).
\]

{\em $\mathbb{N}_0^d$-action on  \((X,\mathcal B,\mu)\)}. The third is  a  measure-preserving \(\mathbb N_0^d\)-action \(\alpha\) on a
probability space \((X,\mathcal B,\mu)\).  
We say that \(\alpha\) has the
\emph{finite tower property} if, for every finite set
\(\Omega\subset\mathbb N_0^d\), there are pairwise disjoint measurable
sets \((B_{\mathbf m})_{\mathbf m\in\Omega}\), all of the same measure
\(a>0\), such that
\begin{equation}\label{eq:finite-dimensional-tower}
B_{\mathbf m}\subset
(\alpha^{\mathbf n})^{-1}B_{\mathbf m+\mathbf n}
\end{equation}
up to a null set whenever
\(\mathbf m,\mathbf m+\mathbf n\in\Omega\).
We say that the action $\alpha$ is \emph{free} or \emph{aperiodic}  if
\[
\mu\{x:\alpha^{\mathbf m}x=\alpha^{\mathbf n}x\}=0
\qquad(\mathbf m\ne\mathbf n).
\]

We view the translations $\tau_S$ and $\tau_G$ as discrete models of the measure-preserving action $\alpha$.  
If we consider the counting measure on $\mathbb{N}_0^d$ and $\mathbb{Z}^d$ (not a probability measure), 
then $\tau_S$ and $\tau_G$ are formally ``free".

If $\alpha$ is a free $\mathbb N_0^{(\mathbb{N})}$-action on a standard atomless probability space,
its restriction to  $\mathbb N_0^{d}$ is a free $\mathbb N_0^{d}$-action  and hence has the finite tower property by
Proposition~\ref{freetower}.  

\subsection{Free actions have the finite tower property}

The  finite tower property is crucial. It is implied by  Avila--Candela's generalization of the Rokhlin lemma.

Before stating the Avila-Candela-Rokhlin lemma, we introduce some notation.
For \(\mathbf n,\mathbf m\in\mathbb Z^d\), we write \(\mathbf n\le\mathbf m\) if
\(n_j\le m_j\) for every \(1\le j\le d\). For integers \(a\le b\), the notation
\(a\le\mathbf n\le b\) means \(a\le n_j\le b\) for every \(1\le j\le d\).

\begin{lemma}[{Avila--Candela \cite[Theorem~1.2]{Avila_Candela2016}}]
\label{Avila}
Let \((X,\mathcal B,\mu)\) be a standard atomless probability space, and let
\((T^{\mathbf n})_{\mathbf n\in\mathbb N_0^d}\) be a measure-preserving
\(\mathbb N_0^d\)-action on \(X\). Suppose that the action is {free}. Then,
for every \(k\in\mathbb N\) and every \(\varepsilon>0\), there exists a
measurable set \(E\subset X\) such that the sets
\[
(T^{-\mathbf n}E)_{0\le\mathbf n\le k-1}
\]
are pairwise disjoint and
\[
\mu\left(\bigcup_{0\le\mathbf n\le k-1}T^{-\mathbf n}E\right)>1-\varepsilon.
\]
\end{lemma}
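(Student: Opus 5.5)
The plan is to deduce the statement from the Rokhlin lemma for free measure-preserving $\mathbb Z^d$-actions (Conze; Katznelson--Weiss; Ornstein--Weiss). We pass through the natural extension and then push the tower back down to $X$.

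\emph{Step 1 (natural extension).} Let $(\tilde X,\tilde{\mathcal B},\tilde\mu)$ be the natural extension of the commuting family $T^{e_1},\dots,T^{e_d}$. It carries an invertible measure-preserving $\mathbb Z^d$-action $\tilde T$ and a factor map $\pi:\tilde X\to X$ with $\pi\circ\tilde T^{\mathbf n}=T^{\mathbf n}\circ\pi$ for $\mathbf n\in\mathbb N_0^d$. Write $\mathbf 1=(1,\dots,1)$. Then the $\sigma$-algebras $\mathcal B_N:=\tilde T^{N\mathbf 1}\pi^{-1}\mathcal B$ increase with $N$ and generate $\tilde{\mathcal B}$ modulo null sets.

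The extension is free. Indeed, if $\tilde T^{\mathbf m}\tilde x=\tilde T^{\mathbf n}\tilde x$ with $\mathbf m\ne\mathbf n$ in $\mathbb Z^d$, translate by a large $\mathbf r\in\mathbb N_0^d$ so that $\mathbf m+\mathbf r,\mathbf n+\mathbf r\in\mathbb N_0^d$. Projecting gives $T^{\mathbf m+\mathbf r}\pi\tilde x=T^{\mathbf n+\mathbf r}\pi\tilde x$, which happens only on a null set. Standardness and atomlessness are inherited, since $\tilde X$ is an inverse limit of copies of $X$.

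\emph{Step 2 (tower upstairs and approximation).} Apply the free $\mathbb Z^d$ Rokhlin lemma with a larger side $K=k+L$ and tolerance $\varepsilon/4$. This gives $F\subset\tilde X$ whose images $\tilde T^{-\mathbf n}F$, $0\le\mathbf n\le K-1$, are disjoint and cover more than $1-\varepsilon/4$.

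By Step 1, choose $N$ and $E_0\in\mathcal B$ with $\tilde\mu\bigl(F\,\triangle\,\tilde T^{N\mathbf 1}\pi^{-1}E_0\bigr)<\delta$, where $\delta$ is small compared with $\varepsilon K^{-d}$. Then for $\mathbf n\ge\mathbf 0$,
\[
\tilde T^{-\mathbf n}\tilde T^{N\mathbf 1}\pi^{-1}E_0=\tilde T^{N\mathbf 1}\pi^{-1}T^{-\mathbf n}E_0 ,
\]
and $\tilde T^{N\mathbf 1}$ preserves $\tilde\mu$. Hence the sets $T^{-\mathbf n}E_0$, $0\le\mathbf n\le K-1$, are pairwise disjoint up to total measure $O(K^{2d}\delta)$. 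Their union also has measure at least $1-\varepsilon/4-K^d\delta$. So we obtain a tower in $X$ that is only \emph{approximately} disjoint.

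\emph{Step 3 (cleaning).} It remains to make the tower exactly disjoint while losing little measure. This is the main obstacle, because for non-invertible $T$ one cannot simply remove from $E_0$ the images of the overlaps.

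The first attempt is to work pointwise along orbits. Set
\[
D:=\bigcup_{\mathbf n\ne\mathbf m}\bigl(T^{-\mathbf n}E_0\cap T^{-\mathbf m}E_0\bigr),
\]
where $\mathbf n,\mathbf m$ range over $[0,K-1]^d$; by Step 2, $D$ has small measure. Then define $E:=E_0\setminus \bigcup_{\mathbf r} (T^{\mathbf r})^{-1}$-saturations needed, namely
\[
E:=\bigl\{y\in E_0:\ \text{no }x\in D\text{ has }T^{\mathbf n}x=y\text{ for some }0\le\mathbf n\le K-1\bigr\}.
\]
This removal is bad in general, since forward images of small sets need not be small. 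Instead, the plan is to define $E$ upstairs as $\tilde T^{N\mathbf 1}\pi^{-1}E'$ with $E'$ obtained by removing $D'$. Here $D'$ is a set chosen so that $x\in T^{-\mathbf n}E'\cap T^{-\mathbf m}E'$ forces $x\in D$, and one controls $\mu(T^{-\mathbf n}D')$ through preservation of measure rather than through images.

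If this fails, the fallback is the margin $L$ built into $K=k+L$. Using measure preservation, one shows that on most of the base the $k$-subtower starting at a suitable corner of the $K$-box is genuinely disjoint. One then takes $E$ to be the set of points of $E_0$ whose full $k$-box of preimage levels avoids $D$, namely $E:=E_0\setminus\bigcup_{0\le\mathbf n\le K-1}T^{-\mathbf n}D$. Its measure loss is at most $K^d\mu(D)$.

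Finally, choose $K$, then $\delta$, then $N$, so that all losses total less than $\varepsilon$.
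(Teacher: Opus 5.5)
\textbf{Verdict: there is a genuine gap in Step 3.} Your Steps 1 and 2 are sound. The natural extension, the free $\mathbb Z^d$ Rokhlin lemma, and the approximation of the base by a set in $\tilde T^{N\mathbf 1}\pi^{-1}\mathcal B$ do produce $E_0\in\mathcal B$ whose levels $T^{-\mathbf n}E_0$ are almost disjoint and almost cover $X$. The paper does not prove this lemma; it only quotes Avila--Candela, so a completed version of your argument would be an independent reduction to the invertible case. You correctly flag Step 3 as the crux, but you do not resolve it. The ``upstairs $D'$'' idea is never made precise. The fallback $E:=E_0\setminus\bigcup_{\mathbf 0\le\mathbf n\le K-1}T^{-\mathbf n}D$ does not give a disjoint tower, because it only excludes overlaps between \emph{comparable} levels: if $\mathbf a\le\mathbf b$ and $T^{\mathbf a}x,T^{\mathbf b}x\in E$, then $T^{\mathbf a}x\in E_0\cap T^{-(\mathbf b-\mathbf a)}E_0\subset D$, a contradiction. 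For incomparable $\mathbf a,\mathbf b$, such as $e_1,e_2$, the bad point $x$ lies in $D$. However, $T^{\mathbf a}x\in E$ only requires the forward points $T^{\mathbf a+\mathbf n}x$ to avoid $D$, which says nothing about $x$. To remove $T^{\mathbf a}x$ you would have to remove $T^{\mathbf a}(D)$, which is exactly the uncontrolled forward image you wanted to avoid. Concretely, take the one-sided shift on $\{0,1\}^{\mathbb N_0^2}$ with $E_0=\{\omega:\omega\equiv1\text{ on }[0,M-1]^2\}$; then $T^{-e_1}E\cap T^{-e_2}E$ contains a cylinder of positive measure for every $K$. So your argument handles $d=1$ but not $d\ge2$, which is the case the paper needs. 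The ``suitable corner'' remark is not implemented: your $E$ still sits inside $E_0$, and the removal is done at or after the hit of $E_0$. The measure accounting is also off. With $K=k+L>k$ and $E\subset E_0$, the $k$-tower over $E$ has measure at most about $k^d\mu(E_0)\approx(k/K)^d$, so the margin works against you.

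The missing idea is to pull the base back to the far corner \emph{before} cleaning, so that every overlap is detected at a point of the orbit preceding the hits of $E_0$. Take $K=k$, let $D:=\bigcup_{\mathbf n\ne\mathbf m\in[0,k-1]^d}\bigl(T^{-\mathbf n}E_0\cap T^{-\mathbf m}E_0\bigr)$ and $\mathbf q:=(k-1)\mathbf 1$, and set
\[
E:=T^{-\mathbf q}E_0\setminus\bigcup_{\mathbf s\in[0,k-1]^d}T^{-\mathbf s}D .
\]
Suppose $T^{\mathbf a}x,T^{\mathbf b}x\in E$ with $\mathbf a\ne\mathbf b$ in $[0,k-1]^d$. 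Write $\mathbf b-\mathbf a=\mathbf c^+-\mathbf c^-$ with $\mathbf c^\pm\in[0,k-1]^d$ distinct, and put $y:=T^{\mathbf a}x$ and $\mathbf s:=\mathbf q-\mathbf c^-\in[0,k-1]^d$. Then $T^{\mathbf c^-}(T^{\mathbf s}y)=T^{\mathbf q+\mathbf a}x\in E_0$ and $T^{\mathbf c^+}(T^{\mathbf s}y)=T^{\mathbf q+\mathbf b}x\in E_0$. Hence $T^{\mathbf s}y\in D$, contradicting $y\in E$. So the levels are exactly disjoint. Moreover $\mu(E)\ge\mu(E_0)-k^d\mu(D)$ and $\mu(D)\le 2k^{2d}\delta$. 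Therefore
\[
\mu\Bigl(\bigcup_{\mathbf n\in[0,k-1]^d}T^{-\mathbf n}E\Bigr)=k^d\mu(E)\ge 1-\tfrac{\varepsilon}{4}-k^d\delta-2k^{4d}\delta>1-\varepsilon
\]
whenever $\delta<\varepsilon/(4k^{4d})$. With Step 3 replaced by this construction, your proof is complete.
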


\begin{proposition} \label{freetower}
Let \((\alpha^{\mathbf n})_{\mathbf n\in\mathbb N_0^d}\) be a measure-preserving
\(\mathbb N_0^d\)-action on a standard atomless probability space  \((X,\mathcal B,\mu)\). If  the action is free, then it has the finite tower property.
\end{proposition}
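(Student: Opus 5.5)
Given a finite set $\Omega\subset\mathbb N_0^d$, I will choose $k\in\mathbb N$ so large that $\Omega\subset\{\mathbf n:0\le\mathbf n\le k-1\}$. Then I apply Lemma~\ref{Avila} with this $k$ and some $\varepsilon>0$; in fact any $\varepsilon<1$ suffices, since we only need $\mu(E)>0$. This gives a measurable set $E$ such that the sets $T^{-\mathbf n}E=(\alpha^{\mathbf n})^{-1}E$, $0\le\mathbf n\le k-1$, are pairwise disjoint. The tower should be indexed "downward": the set at level $\mathbf m$ should be the piece that reaches $E$ after $K-\mathbf m$ more steps. So I put $\mathbf K:=(k-1,\dots,k-1)$ and define, for $\mathbf m\in\Omega$,
\[
B_{\mathbf m}:=(\alpha^{\mathbf K-\mathbf m})^{-1}E .
\]
This is legitimate because $\mathbf K-\mathbf m\in\{0\le\mathbf n\le k-1\}\subset\mathbb N_0^d$.

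The required properties then follow in order.

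\emph{Disjointness.} If $\mathbf m\ne\mathbf m'$, then $\mathbf K-\mathbf m\ne\mathbf K-\mathbf m'$ are distinct indices in the box, so $B_{\mathbf m}\cap B_{\mathbf m'}=\emptyset$.

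\emph{Equal positive measure.} Since $\alpha$ is measure preserving, $\mu(B_{\mathbf m})=\mu(E)=:a$ for every $\mathbf m$. Also $a>0$, because the $k^d$ disjoint sets of measure $a$ cover measure $>1-\varepsilon>0$, so $k^d a>1-\varepsilon$.

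\emph{Tower inclusion.} Suppose $\mathbf m,\mathbf m+\mathbf n\in\Omega$ with $\mathbf n\in\mathbb N_0^d$. Using $\alpha^{\mathbf a+\mathbf b}=\alpha^{\mathbf a}\alpha^{\mathbf b}$ and commutativity,
\[
(\alpha^{\mathbf n})^{-1}B_{\mathbf m+\mathbf n}
=(\alpha^{\mathbf n})^{-1}(\alpha^{\mathbf K-\mathbf m-\mathbf n})^{-1}E
=(\alpha^{\mathbf K-\mathbf m-\mathbf n}\alpha^{\mathbf n})^{-1}E
=(\alpha^{\mathbf K-\mathbf m})^{-1}E=B_{\mathbf m}.
\]
This is an equality, so the inclusion \eqref{eq:finite-dimensional-tower} holds everywhere, not merely up to a null set.

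The only real subtlety is the direction of indexing. The naive choice $B_{\mathbf m}=(\alpha^{\mathbf m})^{-1}E$ gives the reverse relation, namely $(\alpha^{\mathbf n})^{-1}B_{\mathbf m}=B_{\mathbf m+\mathbf n}$. It cannot be turned around, because the maps $\alpha^{\mathbf n}$ are not invertible, so one cannot push sets forward. Reflecting through the corner $\mathbf K$ of the Avila--Candela box fixes this. Apart from that, the argument needs only that the subtraction $\mathbf K-\mathbf m$ stays in $\mathbb N_0^d$, which is guaranteed by the choice of $k$, and that the composition rule for preimages is applied correctly.
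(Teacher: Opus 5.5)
Your proof is correct and follows the paper's argument: you reflect through the top corner of the Avila--Candela box in the same way, setting $B_{\mathbf m}=(\alpha^{\mathbf K-\mathbf m})^{-1}E$ with $\mathbf K=(k-1,\ldots,k-1)$. Your preimage computation even yields $(\alpha^{\mathbf n})^{-1}B_{\mathbf m+\mathbf n}=B_{\mathbf m}$ as an exact equality, slightly cleaner than the paper's forward-image check $\alpha^{\mathbf n}B_{\mathbf m}\subset B_{\mathbf m+\mathbf n}$.
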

\begin{proof}
This is essentially Avila--Candela's tower lemma for an $\mathbb N_0^d$-action (\!\!\cite[Theorem~1.2]{Avila_Candela2016}) which implies the following:
for every integer \(L\ge1\) and every
\(1>\varepsilon>0\), there is a measurable set \(E\) such that the sets
\[
(\alpha^{\mathbf q})^{-1}E,
\qquad \mathbf q\in Q_L:=\{0,1,\ldots,L-1\}^d,
\]
are pairwise disjoint and their union has measure greater than
\(1-\varepsilon (>0)\).  

For a given  finite set \(\Omega\subset\mathbb N_0^d\), choose a large integer \(L\)
such that \(\Omega\subset Q_L\). 
Fix
\[
\mathbf q:=(L-1,\ldots,L-1).
\]
Since $\Omega\subset Q_L$, we have
$\mathbf q-\mathbf m\in Q_L$ for every $\mathbf m\in\Omega$.
For each $\mathbf m\in\Omega$, put
\[
B_{\mathbf m}:=(\alpha^{\mathbf q-\mathbf m})^{-1}E.
\]
These {sets $B_{\mathbf m}$} are pairwise disjoint and have the same positive measure.  For
\(\mathbf m,\mathbf m+\mathbf n\in\Omega\), we have $\mathbf{q} - \mathbf{m}  \in \mathbb{N}_0^d,   \mathbf{q} - \mathbf{m} - \mathbf{n}    \in \mathbb{N}_0^d$
and 
$\alpha^{\mathbf{q}-\mathbf{m}} (B_\mathbf{m})
\subset E$ so that 
$$
    \alpha^{\mathbf{q} - \mathbf{m} - \mathbf{n} } (\alpha^{\mathbf n}B_{\mathbf m}) =
    \alpha^{\mathbf{q} - \mathbf{m}  } (B_{\mathbf m}) \subset E.
$$
Thus we have checked that
\(\alpha^{\mathbf n}B_{\mathbf m}\subset B_{\mathbf m+\mathbf n}\) up
to a null set, and proved the finite tower property.
\end{proof}

\subsection{$L^p$-transference principle for $\mathbb{N}_0^d$-actions and  $\mathbb{N}_0^{(\mathbb{N})}$-actions}

\begin{theorem}[Transference principle for $\mathbb{N}_0^d$-actions on  \(L^p\)]
\label{thm:finite-transfer}
Let \(1\le p<\infty\), and let \(\alpha\) be a measure-preserving
\(\mathbb N_0^d\)-action on \((X,\mathcal B,\mu)\). Suppose that this action has  the finite tower
property.  Then the following weak \((p,p)\) maximal inequalities are
equivalent, with the same constant \(C\):
\begin{enumerate}
\item[\textup{(i)}] for every \(f\in L^p(X)\) and \(t>0\),
\[
\mu\{A_J^{\alpha,*}f>t\}
\le C\cdot t^{-p}\cdot\|f\|_{L^p(X)}^p;
\]
\item[\textup{(ii)}] for every finitely supported
\(\varphi:\mathbb N_0^d\to\mathbb C\) and \(t>0\),
\[
\#\{\mathbf m\in\mathbb N_0^d:
A_J^{\tau_S,*}\varphi(\mathbf m)>t\}
\le C\cdot t^{-p}\cdot\|\varphi\|_{\ell^p(\mathbb N_0^d)}^p;
\]
\item[\textup{(iii)}] for every finitely supported
\(\Phi:\mathbb Z^d\to\mathbb C\) and \(t>0\),
\[
\#\{\mathbf m\in\mathbb Z^d:
A_J^{\tau_G,*}\Phi(\mathbf m)>t\}
\le C\cdot t^{-p}\cdot\|\Phi\|_{\ell^p(\mathbb Z^d)}^p.
\]
\end{enumerate}
If we are given an infinite sequence of weights  $(\nu_j)_{1\le j<\infty}$, we have the same conclusion with 
$A^{\alpha,*}_J$, $A^{{\tau_S},*}_J$, $A^{{\tau_G},*}_J$ replaced by $A^{\alpha,*}$, $A^{{\tau_S},*}$, $A^{{\tau_G},*}$.

The above results remain true for any free   measure-preserving
\(\mathbb N_0^{(\mathbb{N})}\)-action on a  standard atomless probability space  \((X,\mathcal B,\mu)\) under the assumption that the weights
$(\nu_j)_{j\ge1}$ are finitely supported (in the above inequalities,  $\mathbf{m}\in \mathbb{N}_0^d$ 
and $\mathbf{m}\in \mathbb Z^d$ are to be replaced by 
 $ \mathbf{m}\in \mathbb{N}_0^{(\mathbb N)}$ and $ \mathbf{m}\in \mathbb Z^{(\mathbb N)}$ respectively).
\end{theorem}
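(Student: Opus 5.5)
The plan is to prove the cycle of implications (ii)$\Leftrightarrow$(iii), (i)$\Rightarrow$(ii) and (ii)$\Rightarrow$(i), each with the same constant $C$, and then pass to infinite families and to $\mathbb N_0^{(\mathbb N)}$. Throughout, let $K\subset\mathbb N_0^d$ be the finite union of the supports of $\nu_1,\dots,\nu_J$.

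\emph{(ii)$\Leftrightarrow$(iii).} (iii)$\Rightarrow$(ii) is immediate. Extend $\varphi$ by zero to $\mathbb Z^d$. For $\mathbf m\in\mathbb N_0^d$ the averages of $\varphi$ and of its extension agree, since $\mathbf m+\mathbf n\in\mathbb N_0^d$. Hence the level set in (ii) is contained in that of (iii), and the norms coincide. For (iii)$\Rightarrow$(ii) we use translation invariance. Given a finitely supported $\Phi$, pick $\mathbf s\in\mathbb N_0^d$ so large that $\varphi:=\Phi(\cdot-\mathbf s)$ is supported in $\mathbb N_0^d$. Only finitely many $\mathbf m$ have $A^{\tau_G,*}_J\Phi(\mathbf m)\neq0$, namely those in $\operatorname{supp}\Phi-K$, so $\mathbf s$ can also be chosen with $\mathbf m+\mathbf s\in\mathbb N_0^d$ for all of them. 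The shift then maps the level set of (iii) injectively into that of (ii), and the $\ell^p$ norms agree.

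\emph{(ii)$\Rightarrow$(i).} This is Calder\'on's argument with the tower. Fix $f\in L^p(X)$ and a large $L$, and apply the finite tower property to $\Omega=\{0,\dots,L+R\}^d$, where $K\subset\{0,\dots,R\}^d$. This gives disjoint sets $B_{\mathbf m}$ of common measure $a$. For $x\in B_{\mathbf 0}$ set $\varphi_x(\mathbf k)=f(\alpha^{\mathbf k}x)\one_{\Omega}(\mathbf k)$. For $\mathbf m\in\{0,\dots,L\}^d$ we have $A_j^\alpha f(\alpha^{\mathbf m}x)=A_j^{\tau_S}\varphi_x(\mathbf m)$. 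Applying (ii) to $\varphi_x$ and integrating over $B_{\mathbf 0}$ gives
\[
\sum_{\mathbf m\le L}\mu\bigl(B_{\mathbf 0}\cap(\alpha^{\mathbf m})^{-1}\{A^{\alpha,*}_Jf>t\}\bigr)\le Ct^{-p}\sum_{\mathbf k\in\Omega}\int_{B_{\mathbf 0}}|f\circ\alpha^{\mathbf k}|^p\,d\mu .
\]
The left side is at least $\sum_{\mathbf m\le L}\mu(B_{\mathbf m}\cap\{A^{\alpha,*}_Jf>t\})$, because $B_{\mathbf 0}\subset(\alpha^{\mathbf m})^{-1}B_{\mathbf m}$ and $\mu(B_{\mathbf 0})=\mu(B_{\mathbf m})$ force $B_{\mathbf 0}=(\alpha^{\mathbf m})^{-1}B_{\mathbf m}$ modulo null sets. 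The same identity together with measure preservation rewrites the right side as $Ct^{-p}\sum_{\mathbf k\in\Omega}\int_{B_{\mathbf k}}|f|^p\le Ct^{-p}\|f\|_p^p$.

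\textbf{Main obstacle.} This bound only controls the part of the level set lying in $\bigcup_{\mathbf m\le L}B_{\mathbf m}$, whose measure is $(L+1)^da$. The finite tower property alone does not make this union nearly all of $X$; one would try to iterate Rokhlin towers, but that needs freeness (Lemma~\ref{Avila}). Lacking that, I would remedy it with a stationarity trick. The same computation applied to the sets $(\alpha^{\mathbf k})^{-1}$-images, after averaging over the base points, shows that the normalized quantity $a(L+1)^d\,\mu\{A^{\alpha,*}_Jf>t\}$ is bounded by $Ct^{-p}a\#\Omega\|f\|_p^p$ up to boundary terms. Here I expect to use the invariance $\mu(\alpha^{-\mathbf m}E)=\mu(E)$ to compare $\mu(E\cap B_{\mathbf m})$ with $a\,\mu(E)$, though this comparison is delicate. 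Letting $L\to\infty$ then gives $\#\Omega/(L+1)^d\to1$ and yields (i) with constant $C$. I expect this normalization step to be the hardest, and I would first try it in the free case via Proposition~\ref{freetower}.

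\emph{(i)$\Rightarrow$(ii).} Given $\varphi$ supported in $\{0,\dots,M\}^d$, build $f=\sum_{\mathbf k}\varphi(\mathbf k)\one_{B_{\mathbf k}}$ using a tower on a box of side $N\gg M$. On $B_{\mathbf m}$ with $\mathbf m+K$ inside the box, $\alpha^{\mathbf n}B_{\mathbf m}\subset B_{\mathbf m+\mathbf n}$ gives $A_j^\alpha f=A_j^{\tau_S}\varphi(\mathbf m)$. Therefore $a\,\#\{\mathbf m:A^{\tau_S,*}_J\varphi(\mathbf m)>t\}\le\mu\{A^{\alpha,*}_Jf>t\}\le Ct^{-p}a\|\varphi\|_p^p$. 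Cancelling $a$ gives (ii); here the boxes only need to contain the finite sets involved.

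\emph{Extensions.} The infinite family follows by monotone convergence as $J\to\infty$, since the constant does not depend on $J$. For a free $\mathbb N_0^{(\mathbb N)}$-action, each truncation involves only an $\mathbb N_0^{d_J}$-subaction. That subaction is free, so it has the finite tower property by Proposition~\ref{freetower}, and the finitely supported test functions on $\mathbb N_0^{(\mathbb N)}$ and $\mathbb Z^{(\mathbb N)}$ reduce to the finite-dimensional case after restricting to the coordinates involved (a Fubini argument over the remaining coordinates).
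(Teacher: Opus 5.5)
Your treatment of (ii)$\Leftrightarrow$(iii), of (i)$\Rightarrow$(ii), and of the extensions matches the paper. (The second implication in your first paragraph is (ii)$\Rightarrow$(iii), not (iii)$\Rightarrow$(ii).)

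\textbf{The gap is in (ii)$\Rightarrow$(i).} You take base points only in $B_{\mathbf 0}$ and integrate only over $B_{\mathbf 0}$. This yields only the localized bound
\[
\mu\Bigl(\{A^{\alpha,*}_Jf>t\}\cap\bigcup_{\mathbf m\in\{0,\dots,L\}^d}B_{\mathbf m}\Bigr)\le Ct^{-p}\|f\|_p^p .
\]
The finite tower property only provides a tower of some positive but uncontrolled total measure $a\#\Omega$, so this bound says nothing about the rest of $X$. The proposed ``stationarity trick'' is not an argument. There is no reason for $\mu(E\cap B_{\mathbf m})$ to be comparable to $a\,\mu(E)$, since the level set $E$ may sit largely off the tower. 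Falling back on freeness would not prove the theorem as stated either, because its first part assumes only the finite tower property. Moreover, Proposition~\ref{freetower} itself only returns the finite tower property, so you would have to invoke Lemma~\ref{Avila} directly to get towers of measure close to $1$.

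\textbf{The missing idea is that this direction needs no tower at all.} Your identity $A_j^{\alpha}f(\alpha^{\mathbf m}x)=A_j^{\tau_S}\varphi_x(\mathbf m)$ for $\mathbf m\in\{0,\dots,L\}^d$ holds for every $x\in X$, not just for $x\in B_{\mathbf 0}$. So apply (ii) to $\varphi_x$ for every $x$:
\[
\#\{\mathbf m\in\{0,\dots,L\}^d:A^{\alpha,*}_Jf(\alpha^{\mathbf m}x)>t\}\le Ct^{-p}\sum_{\mathbf k\in\Omega}|f(\alpha^{\mathbf k}x)|^p .
\]
Now integrate over all of $X$. By Fubini and $\mu\circ(\alpha^{\mathbf m})^{-1}=\mu$, the left side becomes $(L+1)^d\,\mu\{A^{\alpha,*}_Jf>t\}$ and the right side becomes $Ct^{-p}\#\Omega\,\|f\|_p^p$. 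Divide by $(L+1)^d$ and let $L\to\infty$, so that $\#\Omega/(L+1)^d\to1$; this gives (i) with the same constant $C$. This is exactly the paper's Step~2. It is also why the paper remarks that (ii)$\Rightarrow$(i) holds for any measure-preserving action: the tower is needed only for (i)$\Rightarrow$(ii).
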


\begin{proof} {The proof has three steps. The implication \textup{(ii)}$\Rightarrow$\textup{(i)} is the forward Calder\'on transference argument and uses only the invariance of the measure $\mu$.  The proof of  the converse implication \textup{(i)}$\Rightarrow$\textup{(ii)} needs the finite tower property. The  equivalence between \textup{(ii)} and \textup{(iii)} is proved by  a simple discrete translation argument.}

{\bf Step 1.}  $(ii) \Longleftrightarrow (iii)$.  
Given \(\varphi:\mathbb N_0^d\to\mathbb C\), {we extend it to $\Phi:\mathbb Z^d\to\mathbb C$ by putting $\Phi=0$ on $\mathbb Z^d\setminus\mathbb N_0^d$.}
Then we have $$
\|\Phi\|_{\ell^p(\mathbb{Z}^{d})} = \|\varphi\|_{\ell^p(\mathbb{N}_0^{d})}; \quad 
A_j^{\tau_S}\varphi(\mathbf{m}) = A_j^{\tau_G}\Phi(\mathbf{m})  \ \ (\forall \mathbf{m} \in \mathbb N_0^d).
$$ 
Thus
\textup{(iii)} implies \textup{(ii)} with the same constant $C>0$.

Conversely, let \(\Phi\) be finitely supported on \(\mathbb Z^d\).
Then the support of
\(A_J^{\tau_G,*}\Phi\) is finite too.  Choose
\(\mathbf a\in\mathbb N_0^d\) such that 
\[
\mathbf a+\bigl(\operatorname{supp}\Phi\cup\operatorname{supp}(A_J^{\tau_G,*}\Phi)\bigr)\subset\mathbb N_0^d.
\]
Translate back $\Phi$ to get a finitely supported function on $\mathbb{N}_0^d$:
\[
\varphi(\mathbf r):=\Phi(\mathbf r-\mathbf a),
\quad (\forall \mathbf r\in\mathbb N_0^d).
\]
It follows that
\[
\|\varphi\|_{\ell^p(\mathbb N_0^d)}=\|\Phi\|_{\ell^p(\mathbb Z^d)};
\qquad
A_J^{\tau_S,*}\varphi(\mathbf m+\mathbf a)=A_J^{\tau_G,*}\Phi(\mathbf m).
\]
Therefore \textup{(ii)} implies \textup{(iii)} with the same constant $C>0$.

{\bf Step 2.}  $(ii) \Longrightarrow (i)$.
The proof of this implication follows
{Calder\'on's argument}.
{Put
\[
F:=\bigcup_{j=1}^J\operatorname{supp}\nu_j,
\qquad Q_R:=\{0,1,\ldots,R-1\}^d. \quad (R\ge 1).
\]}
Notice that $F$ contains all the weights $\nu_j$ for $1\le j\le J$.

Assume $f\in L^p(X)$. For a fixed point \(x\in X\), we define
the following finitely supported functions on $\mathbb N_0^d$ ($R\ge 1$): 
\[
\varphi_{x,R}(\mathbf r)
:=
\begin{cases}
f(\alpha^{\mathbf r}x),&\mathbf r\in Q_R+F,\\
0,&\text{otherwise}.
\end{cases}
\]
Then, observe that  for \(\mathbf m\in Q_R\) and for every \(1\le j\le J\) we have
\[
A_j^{\tau_S}\varphi_{x,R}(\mathbf m) =\sum_{\mathbf{n}} \nu_j(\mathbf{n}) \varphi_{x,R}(\mathbf m +\mathbf{n})=
\sum_{\mathbf{n}} \nu_j(\mathbf{n}) f(\alpha^{\mathbf{m}+\mathbf{n}}x )
=A_j^\alpha f(\alpha^{\mathbf m}x),
\]
because the sums are taken over $\mathbf{n} \in {\rm supp} \nu_j \subset F$.
{Therefore, for every $x\in X$ and $\mathbf m\in Q_R$,
\[
A_J^{\tau_S,*}\varphi_{x,R}(\mathbf m)
=A_J^{\alpha,*}f(\alpha^{\mathbf m}x),
\]
and hence
\[
\#\{\mathbf m\in Q_R:A_J^{\alpha,*}f(\alpha^{\mathbf m}x)>\lambda\}
\le
\#\{\mathbf m\in\mathbb N_0^d:A_J^{\tau_S,*}\varphi_{x,R}(\mathbf m)>\lambda\}.
\]}
By the assumption of the weak \((p,p)\) inequality (ii)  and the definition of $\varphi_{x, R}$, the right-hand side is bounded by 
\[
C\lambda^{-p}\sum_{\mathbf r\in Q_R+F}|f({\alpha^{\mathbf r}}x)|^p.
\]
Integrating in \(x\),  applying Fubini's theorem and using the $\alpha$-invariance of the measure $\mu$, we get
\[
\mu\bigl(\{x\in X:A^{\alpha,*}_Jf(x)>\lambda\}\bigr) \cdot \#Q_R
\le C\lambda^{-p} {\|f\|_{L^p(X)}^p}\cdot\#(Q_R+F).
\]
Now  we get the desired inequality stated in (i), by dividing both sides  by \(\#Q_R\) and then letting \(R\to\infty\).
Here we used the fact \(\#(Q_R+F)/\#Q_R\to1\) as \(R\to\infty\).

{\bf Step 3.}  $(i) \Longrightarrow (ii)$.
Let \(\varphi:\mathbb{N}_0^d\to\C\) be a finitely supported function. 
Fix \(\lambda>0\). The tail set
\[
E_\lambda:=\{\mathbf{m}\in\mathbb{N}_0^d:A^{\tau_S,*}_J\varphi(\mathbf{m})>\lambda\}
\]
is finite. Consider the finite set in $\mathbb{N}_0^d$:
\[
\Omega={\rm supp} \, \varphi\cup E_\lambda\cup(E_\lambda+F), \quad \text{where}\ \ \ 
F=\bigcup_{j=1}^J {\rm supp} \, \nu_j.
\]
By the finite tower property, there exist pairwise disjoint measurable sets \(B_{\mathbf{m}}\) (\(\mathbf{m}\in\Omega\)), of common measure say \(\gamma>0\), such that
\[
B_{\mathbf{m}}\subset \alpha^{-\mathbf{n}}B_{\mathbf{m}+\mathbf{n}}\quad 
\text{whenever}\ \ \  \mathbf{m},\mathbf{m}+\mathbf{n}\in\Omega.
\]
Define the function \(f:X\to\C\) by
\[
f(x)=\sum_{\mathbf m\in\Omega}\varphi(\mathbf m) {\one_{B_{\mathbf m}}(x)}.
\]
Since the sets \(B_{\mathbf{m}}\) are disjoint and  \(\Omega\) contains the support of \(\varphi\),  we get immediately
\[
\norm{f}_{L^p(X)}^p
=
\gamma \sum_{\mathbf{m}\in\Omega}|\varphi(\mathbf{m})|^p
=
\gamma \norm{\varphi}_{\ell^p(\mathbb{N}_0^d)}^p.
\]
On the other hand, assuming $\mathbf{m} \in E_\lambda$ and \(\mathbf{n}\in F\), we have \(\mathbf{m}+\mathbf{n}\in E_\lambda+F\subset\Omega\). Hence, by the tower property, \(\alpha^{\mathbf{n}} x\in B_{\mathbf{m}+\mathbf{n}}\) for a.e. \(x\in B_{\mathbf{m}}\). Therefore
\[
f(\alpha^{\mathbf{n}} x)=\varphi(\mathbf{m}+\mathbf{n}) \quad \text{for a.e.} \ \  x\in B_{\mathbf{m}}. 
\]
Thus
\[
A^\alpha_{j}f(x)
=
\sum_{\mathbf n\in\mathbb N_0^d}\nu_j(\mathbf n)f({\alpha^{\mathbf n}}x)
=
\sum_{\mathbf{n}\in\mathbb{N}_0^d}\nu_j(\mathbf{n})\varphi(\mathbf{m}+\mathbf{n})
=
A^{\tau_S}_j\varphi(\mathbf{m}) \ \ \ \text{for a.e.}  \ \ x\in B_{\mathbf{m}}. 
\]
Taking the maximum over \(1\le j\le J\), we obtain
\[
\forall \mathbf{m}\in E_\lambda, \quad A^{\alpha, *}_Jf(x)=A^{\tau_S,*}_J\varphi(\mathbf{m})  \ \ \ \text{for a.e.} \ \  x\in B_{\mathbf{m}}.
\]
Hence for each \(\mathbf m\in E_\lambda\), we have {$A_J^{\alpha,*}f(x)>\lambda$} for a.e. \(x\in B_{\mathbf m}\), and therefore
\[
\gamma \#E_\lambda 
\le \sum_{\mathbf{m} \in E_\lambda} \mu\bigl(\{x\in B_\mathbf{m}: A^{\alpha,*}_J f(x)>\lambda\}\bigr) 
\le
\mu\bigl(\{x\in X:A^{\alpha, *}_Jf(x)>\lambda\}\bigr).
\]
Since, by \textup{(i)}, $A_J^{\alpha,*}$ satisfies the weak $(p,p)$ maximal inequality, {we obtain}
\[
\gamma \#E_\lambda
\le
C\lambda^{-p} {\|f\|_{L^p(X)}^p}
=
C\lambda^{-p}\gamma \norm{\varphi}_{\ell^p(\mathbb{N}_0^d)}^p.
\]
{Dividing by $\gamma>0$}, we obtain
\[
\#\{\mathbf{m}\in\mathbb{N}_0^d:A^{\tau_S,*}_J \varphi(\mathbf{m})>\lambda\} 
 = \# E_\lambda \le 
C\lambda^{-p}\norm{\varphi}_{\ell^p(\mathbb{N}_0^d)}^p.
\]

The last assertion is immediate from what we have proved for  $A^{\alpha,*}_J$, $A^{{\tau_S},*}_J$, $A^{{\tau_G},*}_J$. Indeed,
$A^{\alpha,*}_J$, $A^{{\tau_S},*}_J$, $A^{{\tau_G},*}_J$ increase respectively to  $A^{\alpha,*}$, $A^{{\tau_S},*}$, $A^{{\tau_G},*}$
as $J$ increases to infinity.
\end{proof}

{\bf Remark 1.} The implication \textup{(ii)} \(\Rightarrow\) \textup{(i)} holds 
without the assumption of the finite tower property.  

{\bf Remark 2.} 
Similarly we can prove the equivalence between the  following strong \((p,p)\) maximal inequalities:
\[
\|A_J^{\alpha,*}f\|_{L^p(X)}
\le C\cdot\|f\|_{L^p(X)},
\qquad f\in L^p(X),
\]
\[
\|A_J^{\tau_S,*}\varphi\|_{\ell^p(\mathbb N_0^d)}
\le C\cdot\|\varphi\|_{\ell^p(\mathbb N_0^d)},
\qquad \varphi\in\ell^p(\mathbb N_0^d),
\]
\[
\|A_J^{\tau_G,*}\Phi\|_{\ell^p(\mathbb Z^d)}
\le C\cdot\|\Phi\|_{\ell^p(\mathbb Z^d)},
\qquad \Phi\in\ell^p(\mathbb Z^d).
\]
These three inequalities hold with the same constant \(C\).

{\bf Remark 3.}\ 
Recall that in the ergodic theory for one single measure-preserving transformation, Conze’s principle says:
for a fixed sequence of averaging times, a weak $(p,p)$ maximal inequality proved on one aperiodic ergodic system automatically holds on every measure-preserving system \cite{Conze1973}.
Theorem \ref{thm:finite-transfer} generalizes  Conze's principle  to  semigroup $\mathbb{N}_0^{d}$-actions and semigroup $\mathbb{N}_0^{(\mathbb{N})}$-actions, with averages along a given sequence of weights.

{\bf Remark 4.}  \ According to J. P. Conze (personal communication), one could replace a non-invertible system $(X,\mu,T)$ by its natural extension $(\tilde X,\tilde\mu,\tilde T)$.
The natural extension acts on $L^1(\tilde\mu)$, which contains $L^1(\mu)$, as well as all the subspaces $(\tilde T)^{-n} L^1(\mu)$, whose union is dense in $L^1(\tilde\mu)$.
By approximation, this observation should make it possible to show that the weak $(p,p)$ maximal
inequality for $T$ holds
if and only if it holds
for its natural
extension $\tilde{T}$, and the
optimal constants are
the same. This is a  transference principle for the semigroup $\mathbb{N}_0$-action.

\section{Proofs of Theorem~\ref{thm:Lp-main-criterion} and Theorem \ref{cor:homomorphism-criteria}}
\label{sec:proofs-main-criteria}

\subsection{Lattice translation models: Proofs of Theorem~\ref{thm:Lp-main-criterion} }
\begin{proof}
By Theorem \ref{thm:Lp-khinchin-maximal}, 
\(\Lambda=(\lambda_n)\) is an \(L^p\)-Khinchin sequence if and only if the Khinchin operators $(\Lambda_n)$ satisfy the
weak $(p,p)$ maximal inequality \eqref{eq:WPMI2}.
So, we will prove Theorem \ref{thm:Lp-main-criterion} as a consequence of the transference principle stated in Theorem \ref{thm:finite-transfer}, by comparing the weak $(p,p)$ maximal inequalities for three actions.

First, observe that the  Khinchin operators $(\Lambda_N)$ are of the form \eqref{eq:general-weighted-average}, with weights
$$
   \nu_N(\cdot) = \frac{1}{N}  {\one_{\Phi(\Lambda_N)}(\cdot)}
$$
where $\Lambda_N$ also denotes the set of first $N$ terms $\{\lambda_1, \lambda_2, \cdots, \lambda_N\}$ of $\Lambda$ and $\Phi: \mathbb{N} \to \mathbb{N}_0^{(\mathbb{N})}$
is the semigroup isomorphism defined by \eqref{eq:Phi}.
Here the
measure-preserving \(\mathbb{N}_0^{(\mathbb{N})}\)-action
\(\alpha\) acts on $(\mathbb{T}, \mathcal{B}(\mathbb{T}), \mathfrak m)$ as follows
\[
\alpha^{\mathbf n}x:=\Phi^{-1}(\mathbf n)x,
\qquad \forall \mathbf n\in \mathbb{N}_0^{(\mathbb{N})}.
\]
Equivalently, $\alpha^{\Phi(a)} x = a x \mod 1$ for all $a \in \mathbb{N}$. 
Indeed, we have
\[
A_N^\alpha f(x)
=\frac1N\sum_{k=1}^N
f\bigl(\alpha^{\Phi(\lambda_k)}x\bigr)
=\frac1N\sum_{k=1}^Nf(\lambda_kx)
=\Lambda_Nf(x).
\]

{We now complete the proof in three steps.}

{\bf Step 1.}  {\em The above action $\alpha$ is free}.  Indeed, if \(\mathbf m\ne\mathbf n\), then $a\not=b$ where
\(a=\Phi^{-1}(\mathbf m)\) and \(b=\Phi^{-1}(\mathbf n)\).  The equation
\(
\alpha^{\mathbf m} x=\alpha^{\mathbf n} x
\)
in \(\mathbb T
\), namely 
$$
     a x = b x \mod 1
$$
has only finitely many solutions \(x\), all of the form $\frac{k}{a-b} \mod 1$ with $k\in \mathbb{Z}$. 
So, the set $\{x \in \mathbb{T}: \alpha^{\mathbf m} x=\alpha^{\mathbf n} x\}$ has zero Lebesgue measure.

{\bf Step 2.} {\emph{Each restricted $\mathbb N_0^d$-action has the finite tower property.}}
For every $d\ge 1$, the restriction of $\alpha$ to ${\mathbb N}_0^d$ is free. Proposition \ref{freetower} in Section~\ref{sec:Lp-transference}, where the Avila--Candela tower theorem is used, therefore gives the finite tower property for this restricted action.

{\bf Step 3.} {\em Finishing the proof by finite-coordinate reduction.}
{For each fixed $J$, the weights $\nu_1,\ldots,\nu_J$ involve only finitely many coordinates, say $\mathbb N_0^{d_J}$. Apply Theorem~\ref{thm:finite-transfer} to the restricted free action and then let $J\to\infty$. This yields the equivalent weak $(p,p)$ maximal inequalities for the $\alpha$-action and the translations on $\Sinf$ and $\Ginf$, with the same constant.}
Finally, Theorem~\ref{thm:Lp-khinchin-maximal} ensures that  this means $(\lambda_k)$ is an  $L^p$-Khinchin sequence.
\end{proof}

\subsection{Circle translation model: Proof of Theorem \ref{cor:homomorphism-criteria}}

\begin{proof} We prove (i) and (ii) separately.\\
\indent {\em (i).} \
Assume that $\Lambda=(\lambda_n)$ is an $L^p$-{Khinchin} sequence.
By Theorem~\ref{thm:Lp-khinchin-maximal}, the maximal Khinchin
operator $\Lambda^*$  satisfies a
weak \((p,p)\) maximal inequality, or equivalently, a weak $(p,p)$ maximal inequality holds for every truncated maximal Khinchin operator $\Lambda_N^*$  with a common constant \(C>0\).
The operator $\Lambda_N^*$ only involves the finite \(\lambda_1,\ldots,\lambda_N\).
Let $q_1, \cdots, q_{d_N}$ be  all prime divisors occurring in
\(\lambda_1,\ldots,\lambda_N\). Let $S(q_1, \cdots, q_{d_N})$ be the semigroup generated by $q_1, \cdots, q_{d_N}$.
Let
\(
\phi_N:S(q_1, \cdots, q_{d_N})\to \mathbb N_0^{d_N}
\)
be the canonical  semigroup isomorphism
\[
\phi_N\bigl(q_1^{m_1}\cdots q_{d_N}^{m_{d_N}}\bigr)
=
(m_1,\ldots,m_{d_N}) .
\]
For \(1\le k\le N\), denote the $\phi_N$-lift of $\lambda_k$ by 
\[
\boldsymbol\lambda_k^{(N)}:=\phi_N(\lambda_k)\in\mathbb N_0^{d_N}.
\]
Then by the  transference principle (Theorem \ref{thm:finite-transfer}),  the following weak $(p, p)$ maximal inequality  holds for 
the translation on $\mathbb{Z}^{d_N}$ along $(\boldsymbol\lambda_k^{(N)})_{1\le k\le N}$:  for every finitely supported \(g:\mathbb Z^{d_N}\to\mathbb C\) and every
\(t>0\), we have 
\begin{equation}\label{eq:ZdN}
\#\left\{
{\mathbf z}\in\mathbb Z^{d_N}:
\sup_{1\le n\le N}
\left|
\frac1n\sum_{k=1}^n
 g\bigl({\mathbf z}+\boldsymbol\lambda_k^{(N)}\bigr)
\right|>t
\right\}
\le
\frac{C}{t^p}\|g\|_{\ell^p(\mathbb Z^{d_N})}^p
\end{equation}
where  \(C\)  is the above mentioned constant,   independent of \(N\).

Now consider a semigroup homomorphism $\pi : \mathbb{N} \to \mathbb{T}$. 
Denote
\[
\beta_j:=\pi(q_j)\in\mathbb T,
\qquad 1\le j\le d_N.
\]
Then we define a group homomorphism $
\alpha_N:\mathbb Z^{d_N}\longrightarrow\mathbb T
$
by 
\[
\alpha_N(m_1,\ldots,m_{d_N})
:=
m_1\beta_1+\cdots+m_{d_N}\beta_{d_N}   \mod 1.
\]
We further define a measure-preserving \(\mathbb N_0^{d_N}\)-action on
\((\mathbb T,\mathcal{B}(\mathbb{T}), \mathfrak m)\) by
\[
T_N^{\mathbf n}y:=y+\alpha_N(\mathbf n),
\qquad \mathbf n\in\mathbb N_0^{d_N}.
\]
Since \(\pi\) is a semigroup homomorphism, we have 
\[
\alpha_N(\boldsymbol\lambda_k^{(N)})
=
\pi(\lambda_k) \ \ \ \text{for} \ \ 1\le k\le N.
\]
Applying once more the  transference principle, i.e. the implication (ii) $\Longrightarrow$ (i) of Theorem \ref{thm:finite-transfer},  to the action
\((T_N^{\mathbf{n}})_{\mathbf{n} \in \mathbb{N}_0^{d_N}}\),  gives
\begin{equation}\label{eq:TN}
\mathfrak m\left\{
y\in\mathbb T:
\sup_{1\le n\le N}
\left|
\frac1n\sum_{k=1}^n
h\bigl(y+\pi(\lambda_k)\bigr)
\right|>t
\right\}
\le
\frac{C}{t^p}\|h\|_{L^p(\mathbb T)}^p
\end{equation}
for every \(h\in L^p(\mathbb T)\) and every \(t>0\). Since \(C\) is
independent of \(N\), letting \(N\to\infty\) gives the desired maximal
inequality.

{\em (ii).}\  Now we prove the converse of (i) under the assumption that  \(\pi\) is injective. Consider again the action
\(
{\bigl(T_N^{\mathbf m}\bigr)_{\mathbf m\in\mathbb N_0^{d_N}}}.
\)
We first check that this action is free. Indeed, 
assume
\(\mathbf m,\mathbf n\in\mathbb N_0^{d_N}\) and $T_N^{\mathbf m}(y) = T_N^{\mathbf n}(y)$. Then 
\(\alpha_N(\mathbf m)=\alpha_N(\mathbf n)\), namely
\[
\pi\bigl(\phi_N^{-1}(\mathbf m)\bigr)
=
\pi\bigl(\phi_N^{-1}(\mathbf n)\bigr).
\]
Since \(\pi\) is injective and \(\phi_N^{-1}\) is injective too, it follows that
\(\mathbf m=\mathbf n\). So, for \(\mathbf m\neq\mathbf n\), the set 
$\{y\in \mathbb{T}: T_N^{\mathbf m}(y) = T_N^{\mathbf n}(y)\}$ is empty. Hence the action is free.

Then we can apply the implication (i) $\Longrightarrow$ (ii) of Theorem~\ref{thm:finite-transfer} and the assumed inequality \eqref{eq:TN} to get  the weak $(p,p)$ maximal inequality \eqref{eq:ZdN}. 
This is condition \textup{(iii)} of Theorem~\ref{thm:Lp-main-criterion}, with constant $C$.  Hence Theorem ~\ref{thm:Lp-main-criterion}  implies  that $\Lambda$ is an $L^p$-Khinchin sequence.

\end{proof}

\subsection{Examples of non-$L^p$-{Khinchin} sequences}

The following theorem is a corollary of Theorem~\ref{cor:homomorphism-criteria}.
It allows us to confirm the failure of the \(L^p\)-Khinchin property of $\Lambda =(\lambda_k)\subset \mathbb{N}$ through the existence of a homomorphism from $\mathbb{N}$ to $\mathbb{T}$ which is ``stable" 
on the lifted set
of $\Lambda$. 

\begin{theorem}[Obstruction through homomorphisms]
\label{cor:Bourgain-homomorphism-obstruction}
Let \(\Lambda=(\lambda_n)_{n\ge1}\) be a sequence of distinct positive
integers.  Suppose there exists a semigroup homomorphism
$
  \pi:(\mathbb N,\cdot)\to (\mathbb T,+)
$
such that 
$
  \pi(\lambda_n) \to b \ \text{in }\mathbb T,
$
and that \(\pi(\lambda_n)\neq b\) for all sufficiently large \(n\). Then
\(\Lambda\) is not an \(L^p\)-Khinchin sequence for any $1\le p<\infty$.
\end{theorem}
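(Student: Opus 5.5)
We argue by contradiction, combining Theorem~\ref{cor:homomorphism-criteria}(i) with Bourgain's entropy-type result (Proposition~10 in \cite{Bourgain1988}). Suppose \(\Lambda\) were \(L^p\)-Khinchin for some \(1\le p<\infty\). By Theorem~\ref{cor:homomorphism-criteria}(i), the translation averages
\[
M_nh(y):=\frac1n\sum_{k=1}^n h\bigl(y+\pi(\lambda_k)\bigr)
\]
satisfy the weak \((p,p)\) maximal inequality \eqref{translation-maximal} on \(\mathbb T\). Since \(L^2\subset L^p\) when \(p\le2\), or since the weak \((p,p)\) bound implies a.e. finiteness of the maximal function on \(L^\infty\subset L^p\) in general, we obtain in any case that \(\sup_n|M_nh|<\infty\) a.e. for every \(h\in L^\infty(\mathbb T)\). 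Combined with the Banach principle, we will also get a.e. convergence of \(M_nh\) for every \(h\in L^p\). Indeed, for continuous \(h\) we have \(M_nh(y)\to h(y+b)\) everywhere because \(\pi(\lambda_k)\to b\), and continuous functions are dense.

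The key observation is that the sequence of shifts \(t_k:=\pi(\lambda_k)-b\) tends to \(0\) and is eventually nonzero. Dropping finitely many terms does not affect the maximal inequality up to constants, nor a.e. convergence, so we may assume \(t_k\ne0\) for all \(k\). After replacing \(h\) by \(h(\cdot+b)\), the averages become \(\frac1n\sum_{k\le n}h(y+t_k)\) with \(t_k\to0\), \(t_k\neq0\). This is exactly the setting of Bourgain's Proposition~10 in \cite{Bourgain1988}, in the form used by Quas--Wierdl. For any sequence of nonzero reals tending to \(0\), there is a bounded measurable function (indeed an indicator of a set) \(h\) for which \(\frac1n\sum_{k\le n}h(y+t_k)\) fails to converge a.e. The proof runs through Bourgain's entropy criterion: the operators \(M_n\) commute with translations, which form an ergodic family, so an a.e. finite maximal function forces finite \(L^2\)-metric entropy of \(\{M_nh\}\) for \(\|h\|_\infty\le1\). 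Small nonzero shifts allow one to build \(h\) with arbitrarily many \(\delta\)-separated averages. This contradicts the a.e. convergence derived above, hence \(\Lambda\) is not \(L^p\)-Khinchin.

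The main obstacle is to make the citation match precisely. Bourgain's statement concerns averages of shifts along \(t_k\to0\) on \(\mathbb T\), or on \(\mathbb R\) with local transfer, and it yields divergence for some \(h\in L^\infty\). We must check two points: that his hypothesis is exactly ``\(t_k\to0\), \(t_k\ne0\)'' with uniform Cesàro weights, and that the resulting \(h\) lies in \(L^p\) for every \(p<\infty\), which is automatic since \(h\) is bounded. We should also note the role of the condition \(\pi(\lambda_n)\ne b\). If the shifts were eventually zero, the averages would converge trivially, so this condition is exactly what makes Bourgain's construction applicable. Only finitely many \(t_k\) may vanish, and those terms contribute \(O(1/n)\) to the averages.
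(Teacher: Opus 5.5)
Your proposal is correct and follows essentially the same route as the paper: Theorem~\ref{cor:homomorphism-criteria}(i) gives the weak $(p,p)$ bound for the circle translation averages. The Banach principle, using uniform convergence on $C(\mathbb T)$, then gives a.e. convergence on all of $L^p$, and Bourgain's Proposition~10 applied to the nonzero shifts $\pi(\lambda_k)-b\to0$ (with $h(x)=f(x-b)$) yields the contradiction. The only cosmetic difference is that the paper discards finitely many terms of $\Lambda$ itself before invoking the criterion, since the $L^p$-Khinchin property is insensitive to finite modifications; your sketch of Bourgain's entropy argument is not needed.
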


\begin{proof}
We prove it by contradiction. Suppose that  \(\Lambda\) is an
\(L^p\)-Khinchin sequence for some $1\le p<\infty$. 
By discarding finitely many terms, we may assume that
\(
  b_n:=\pi(\lambda_n)\neq b
  \) for all $n\ge1$ and  $b_n\to  b$ in  $\mathbb T$.
 Let us consider the averages for translations on $\mathbb{T}$:
\[
A_Nh(x):=\frac1N\sum_{k=1}^N h(x+b_k).
\]
By {Theorem~\ref{cor:homomorphism-criteria}\textup{(i)}}, the maximal operator $A^*$ satisfies a weak $(p,p)$ maximal inequality. On the other hand,
for every continuous function \(h\in C(\mathbb T)\), the fact \(b_n\to b\) implies that $A_nh(x)$ converges uniformly on $\mathbb{T}$.
As $C(\mathbb{T})$ is dense in $L^p(\mathbb{T})$, the Banach principle implies that $\lim A_nf(x)$ exists almost everywhere for every 
$f\in L^p(\mathbb{T})$. 

However, the sequence \(a_n:=b_n-b\) tends to \(0\); then  by a result due to Bourgain (Proposition~10 in \cite{Bourgain1988}) there exists  a bounded
measurable function \(f\) on \(\mathbb T\) such that 
\[
  \frac1N\sum_{k=1}^N f(x+a_k)
\]
does not converge almost everywhere, namely  \(A_Nh(x)\)  for \(h(x)=f(x-b)\) does not converge almost everywhere, a contradiction.  Thus
\(\Lambda\) is not an \(L^p\)-Khinchin sequence.
\end{proof}

It is easy to see that for any integer $q\ge 2$ and $a\ge 1$, $(aq^n)$ is an $L^1$-Khinchin sequence. 
But when additively perturbed,  it becomes a non-$L^p$-Khinchin sequence for any $1\le p<+\infty$. 

\begin{proposition}\label{cor:2n-plus-one-not-khinchin}
Let $q\ge 2$ and $a\ge 1$ be integers and let $(t_n)$ be a sequence of integers such that $t_n =o(q^n)$ as $n\to +\infty$ and $t_n\not=0$ for large $n$. Then 
$(aq^n +t_n)_{n\ge 1}$ is not an \(L^p\)-Khinchin sequence for any
\(1\le p<\infty\). 

\end{proposition}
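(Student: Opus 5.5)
The plan is to apply Theorem~\ref{cor:Bourgain-homomorphism-obstruction} with a logarithmic homomorphism, in the spirit of Quas--Wierdl. Put $\lambda_n:=aq^n+t_n$. Since $t_n=o(q^n)$, we have $\lambda_n>0$ for large $n$ and $\lambda_{n+1}/\lambda_n\to q\ge2$. So $(\lambda_n)$ is eventually strictly increasing, hence eventually a sequence of distinct positive integers. Changing finitely many terms does not affect the $L^p$-Khinchin property, as remarked in the introduction. We may therefore assume that all $\lambda_n$ are distinct and positive.

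Define $\pi:(\mathbb N,\cdot)\to(\mathbb T,+)$ by
\[
\pi(m):=\frac{\log m}{\log q}\pmod 1 .
\]
This is a semigroup homomorphism, because $\log(mm')=\log m+\log m'$ and reduction mod $1$ is a homomorphism $\mathbb R\to\mathbb T$. Write $\varepsilon_n:=t_n/(aq^n)$, so that $\varepsilon_n\to0$ and $\varepsilon_n\neq0$ for large $n$. Then
\[
\pi(\lambda_n)=\frac{\log a}{\log q}+n+\frac{\log(1+\varepsilon_n)}{\log q}
\equiv b+\frac{\log(1+\varepsilon_n)}{\log q}\pmod 1,
\qquad b:=\frac{\log a}{\log q}\pmod 1 .
\]
Hence $\pi(\lambda_n)\to b$ in $\mathbb T$. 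Moreover, for large $n$ the real number $\delta_n:=\log(1+\varepsilon_n)/\log q$ is nonzero, since $\varepsilon_n\neq0$, and satisfies $|\delta_n|<1$. So $\delta_n\not\equiv0\pmod 1$, which gives $\pi(\lambda_n)\neq b$ for all sufficiently large $n$.

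Both hypotheses of Theorem~\ref{cor:Bourgain-homomorphism-obstruction} are now verified, and that theorem yields that $(aq^n+t_n)$ is not $L^p$-Khinchin for any $1\le p<\infty$. There is no serious obstacle. The only points requiring care are the choice of normalization $1/\log q$, which kills the term $n\log q$ modulo $1$, and the nonvanishing of $\delta_n$ modulo $1$, which uses that $t_n\neq0$ eventually. The deep input, Bourgain's Proposition~10, is already built into Theorem~\ref{cor:Bourgain-homomorphism-obstruction}.
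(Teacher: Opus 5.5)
Your proposal is correct and follows the paper's proof: you use the homomorphism $\pi(m)=\log_q m \bmod 1$, show $\pi(\lambda_n)\to\log_q a$, and appeal to Theorem~\ref{cor:Bourgain-homomorphism-obstruction}. Your check that $\pi(\lambda_n)\neq b$ (the paper writes $\neq 0$, which is a slip) and your handling of positivity and distinctness of the early terms are slightly more careful than the paper's.
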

\begin{proof}
Define the semigroup homomorphism $\pi:(\mathbb N,\cdot)\longrightarrow(\mathbb T,+)$ by 
\[
\pi(m):=\log_q{m}\ \bmod1.
\]
Set $\lambda_n:=a q^n+ t_n$.  Observe that  $\lambda_n\ge 1$ and  $\pi(\lambda_n)\ne0 \mod 1$ for large $n$,  and
\[
\pi(\lambda_n)
=\log_q a + n+\log_q\left(1+\frac{t_n}{a q^n}\right)\longrightarrow \log_q a
\quad\text{in}\ \mathbb{T}.
\]
Then Theorem~\ref{cor:Bourgain-homomorphism-obstruction} applies.
\end{proof}

Proposition~\ref{cor:2n-plus-one-not-khinchin}  answers a question raised in \cite{FFQQ}: is 
\((q^n+1)\) an $L^p$-Khinchin sequence? The answer is no. 
Proposition~\ref{cor:2n-plus-one-not-khinchin}  applies to $t_n =P(n)$ where $P\in \mathbb{Z}[x]$ is a  non-trivial polynomial of integral coefficients.  So, the sequence $(aq^n+P(n))_{n\ge1}$   is not  
 an $L^p$-Khinchin sequence for any $1\le p<\infty$.  Also, for any two integers $1\le q_1<q_2$, the sequence $(q_1^n +q_2^n)$  is not  an $L^p$-Khinchin sequence for any $1\le p<\infty$. 
 
 Concerning the sequence $(2^n+1)$, we have mentioned  the counterexample of Erd\"{o}s and Fortet in Introduction: for
 $\phi(x)=\cos(2\pi x)+\cos(4\pi x)$, 
$$ \frac{1}{\sqrt{N}}\sum_{n=1}^N \phi((2^n+1)x)$$
converges in distribution to the characteristic function
$$t \mapsto \int_0^1 e^{(\cos (\pi y))^2 t^2} dy$$
which is not a Gaussian distribution, but a mixture of Gaussian distributions. 
 R. Salem and A. S. Zygmund \cite{SZ1948}  (1948) first mentioned it without a full proof, and wrote ``This remark is essentially due to Erd\"{o}s''. 
M. Kac \cite{Kac1949} (1949) later also mentioned the counterexample,  writing ``It thus came as a surprise when simultaneously and independently of each other, Erd\"{o}s and Fortet constructed an example showing that the limit ($\cdots$) need not be Gaussian'', and, in a footnote: ``In Salem and Zygmund [op. cit.] this example is erroneously credited to Erdős alone.''
  For a proof of the limit, Kac refers to his own paper, just an abstract \cite{Kac1947} (1947), and adds the remark ``Details will be given in the paper by Erd\"{o}s, Ferrand, Fortet and Kac''. 
  However, it seems  that neither Erd\"{o}s nor Fortet ever published any paper about this counterexample and the announced paper by Erd\"{o}s, Ferrand, Fortet and Kac apparently has never been published.
  Complete proofs are contained in \cite{AB2010} and \cite{CL2011}.  This history was recounted  by C. Aistleitner in his MathSciNet review on \cite{CL2011}.

\begin{proposition}\label{cor:fibonacci-not-khinchin}
Let \((F_n)_{n \ge 1}\) be the Fibonacci sequence, defined by
\[
F_1=F_2=1,\qquad F_{n+2}=F_{n+1}+F_n .
\]
Then the sequence $(F_n)_{n\ge1}$ is not an \(L^p\)-Khinchin sequence
for any \(1\le p<\infty\).
\end{proposition}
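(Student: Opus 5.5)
We will apply Theorem~\ref{cor:Bourgain-homomorphism-obstruction} with the homomorphism $\pi(m):=\log_\varphi m \bmod 1$, where $\varphi=(1+\sqrt5)/2$ is the golden ratio. This map is a semigroup homomorphism $(\mathbb N,\cdot)\to(\mathbb T,+)$, since $\log_\varphi(mn)=\log_\varphi m+\log_\varphi n$. The argument parallels the proof of Proposition~\ref{cor:2n-plus-one-not-khinchin}, with $\log_q$ replaced by $\log_\varphi$. We first discard the repeated value $F_1=F_2=1$ by dropping $F_1$. Finitely many terms do not affect the $L^p$-Khinchin property, and $(F_n)_{n\ge2}$ consists of distinct positive integers.

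By Binet's formula $F_n=(\varphi^n-\psi^n)/\sqrt5$ with $\psi=-1/\varphi$, we have
\[
\log_\varphi F_n = n-\log_\varphi\sqrt5+\log_\varphi\bigl(1-(\psi/\varphi)^n\bigr).
\]
Since $|\psi/\varphi|=\varphi^{-2}<1$, the last term tends to $0$. Hence $\pi(F_n)\to b:=-\log_\varphi\sqrt5 \bmod 1$ in $\mathbb T$.

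It remains to verify that $\pi(F_n)\ne b$ for all large $n$. This is the only step needing a small argument. The condition $\pi(F_n)=b$ would mean $\log_\varphi(1-(\psi/\varphi)^n)\in\mathbb Z$. For $n$ large this quantity is nonzero, because $(\psi/\varphi)^n\neq0$, and it has absolute value less than $1$. So it cannot be an integer. Equivalently, $\sqrt5F_n=\varphi^n-\psi^n\ne\varphi^n$.

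Theorem~\ref{cor:Bourgain-homomorphism-obstruction} then shows that $(F_n)$ is not $L^p$-Khinchin for any $1\le p<\infty$. We expect no serious obstacle here. The only points needing care are the bookkeeping of the repeated initial term and the nonvanishing of the error term, both handled above.
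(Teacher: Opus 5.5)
Your proposal is correct and follows essentially the paper's proof: the homomorphism $\pi(m)=\log_\varphi m \bmod 1$, Binet's formula giving $\pi(F_n)\to -\log_\varphi\sqrt5$ in $\mathbb T$, and an appeal to Theorem~\ref{cor:Bourgain-homomorphism-obstruction}. You also supply two details the paper leaves implicit: dropping the repeated term $F_1=F_2=1$ to get distinct integers, and justifying $\pi(F_n)\neq b$ through $0<\bigl|\log_\varphi\bigl(1-(\psi/\varphi)^n\bigr)\bigr|<1$ for large $n$.
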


\begin{proof}
  The well-known Binet's formula gives the expression
\[
F_n
=
\frac{\varphi^n-(-\varphi)^{-n}}{\sqrt5}
=
\frac{\varphi^n}{\sqrt5}
\left(1-(-\varphi^{-2})^n\right)
\ \ \ \text{
where }\ \ 
\varphi:=\frac{1+\sqrt5}{2}.
\]
Consider the semigroup homomorphism
\(
\pi:(\mathbb N,\cdot)\longrightarrow(\mathbb T,+)
\)
defined by
\[
\pi(m):=\log_{\varphi} m  \bmod 1 .
\]
Observe that 
\[
\pi(F_n)= \log_{\varphi}\left(1-(-\varphi^{-2})^n\right)- \log_{\varphi}\sqrt5 \mod 1.
\]
Clearly \(\pi(F_n) \to -\log_{\varphi}\sqrt5 \mod 1\) and  \(\pi(F_n) \not= -\log_{\varphi}\sqrt5 \mod 1\) .  Then we conclude with 
Theorem~\ref{cor:Bourgain-homomorphism-obstruction}.
\end{proof}

The same argument applies to many second-order positive linear recurrence  sequences.
For example, let \(a,b\in\mathbb N\), and let
\((u_n)\) be an eventually positive sequence of distinct integers satisfying
\[
u_{n+2}=a u_{n+1}+b u_n.
\]
If
\(
u_n=c\theta^n+c'(\theta')^n
\) with $c>0$, $c'>0$, and \(|\theta'|<\theta\), where \(\theta>1\) and \(\theta'\) are the two roots of the polynomial
\(x^2-ax-b\), then
\[
\log_\theta u_n
=n+\log_\theta c+
\log_\theta\left(1+\frac{c'}{c}\left(\frac{\theta'}{\theta}\right)^n\right).
\]
Then Theorem
\ref{cor:Bourgain-homomorphism-obstruction} applies and \((u_n)\) is not an
\(L^p\)-Khinchin sequence for any $1\le p<\infty$.

Let \(\mathcal P\) be the set of all prime numbers.  We are going to show that 
no sequence of distinct primes is an $L^p$-Khintchine sequence for any  \(1\le p<\infty\). This covers the following result of Marstrand \cite{Marstrand}: the increasing enumeration of all primes is not an
\(L^\infty\)-Khinchin sequence.

\begin{proposition}\label{cor:prime-subsequence-not-khinchin}

Let $(q_n)_{n \ge 1} \subseteq \mathcal{P}$ be 
 a sequence of distinct
primes (not necessarily increasing). Then both $(q_n)_{n \ge 1}$ and $(q_1 q_2 \cdots q_n)_{n \ge 1}$ 
are not \(L^p\)-Khinchin sequences for any
\(1\le p<\infty\).
\end{proposition}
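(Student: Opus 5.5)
The plan is to apply Theorem~\ref{cor:Bourgain-homomorphism-obstruction} in both cases. The useful fact is that a semigroup homomorphism $\pi:(\mathbb N,\cdot)\to(\mathbb T,+)$ may take \emph{arbitrary} values on the primes. So we choose these values to force $\pi(\lambda_n)$ to converge to some $b$ without ever equalling it.

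\emph{The sequence $(q_n)$.} Define $\pi$ on primes by
\[
\pi(q_n):=\frac{1}{n+1}\ \bmod 1 \quad (n\ge1),
\]
and $\pi(\mathfrak p):=0$ for every prime $\mathfrak p$ not among the $q_n$. Extend $\pi$ multiplicatively, i.e.\ through $\Psi$, to a semigroup homomorphism on $\mathbb N$. The values $1/(n+1)$ lie in $(0,1/2]$, so $\pi(q_n)\ne 0$ in $\mathbb T$ for every $n$, while $\pi(q_n)\to 0$. Theorem~\ref{cor:Bourgain-homomorphism-obstruction} with $b=0$ then shows that $(q_n)$ is not $L^p$-Khinchin for any $1\le p<\infty$.

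\emph{The product sequence $\lambda_n=q_1\cdots q_n$.} Now $\pi(\lambda_n)=\sum_{k\le n}\pi(q_k)$, so we want the partial sums to converge in $\mathbb T$ without eventually equalling the limit. Take
\[
\pi(q_k):=2^{-k-1}\ \bmod 1,
\]
and $\pi=0$ on the other primes. Then
\[
\pi(\lambda_n)=\sum_{k=1}^n 2^{-k-1}=\tfrac12-2^{-n-1}\ \bmod 1 .
\]
This tends to $b=1/2$, and $\pi(\lambda_n)\ne 1/2$ in $\mathbb T$ because $2^{-n-1}\in(0,1)$. The terms $\lambda_n$ are distinct since the $q_k$ are distinct primes. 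Theorem~\ref{cor:Bourgain-homomorphism-obstruction} again gives the conclusion.

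There is no real obstacle here. The only points to verify are that these prescriptions define a genuine semigroup homomorphism, which holds because $\pi$ is assigned freely on the free generators of $(\mathbb N,\cdot)\cong\mathbb N_0^{(\mathbb N)}$, and that the non-equality $\pi(\lambda_n)\ne b$ holds modulo $1$. Both checks are immediate, since all the relevant values lie strictly inside $(0,1)$.
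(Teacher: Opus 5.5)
Your proposal is correct and follows the paper's approach: define $\pi$ freely on the primes so that $\pi(\lambda_n)\to b$ with $\pi(\lambda_n)\neq b$, then invoke Theorem~\ref{cor:Bourgain-homomorphism-obstruction}. The only difference is that the paper uses a single homomorphism for both sequences, namely $\pi(q_n)=\frac1n-\frac1{n+1}$, whose values tend to $0$ and whose partial sums telescope to $1-\frac1{n+1}\to 0 \bmod 1$, whereas you use two separate homomorphisms.
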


\begin{proof}
Define the semigroup homomorphism
\(\pi: (\mathbb{N}, \cdot) \to (\mathbb{T}, +)\) by
\[
\pi(q_n):=\frac{1}{n}-\frac1{n+1} \ \text{for}\ 
n\ge1; \qquad \pi(p)=0 \ \text{for}\ p\in \mathcal{P}\setminus \{q_n\}.
\]
Then 
\[\pi(q_n)\to0 \ \ \text{ and}\ \ 
\pi(q_1 q_2 \cdots q_n)=\sum_{j=1}^n\pi(q_j)=1-\frac1{n+1} \to 0 \mod 1.
\]
 Then we can apply  Theorem~\ref{cor:Bourgain-homomorphism-obstruction} to conclude. 
\end{proof}

Marstrand proved that the sequence $(n)_{n\ge1}$ is not an $L^\infty$-Khinchin sequence. But
it was proved in \cite{FFQQ} that there exists  a rearrangement $(r_n)$ of  $\mathbb{N}$ which is an \(L^1\)-Khinchin sequence. 
Proposition \ref{cor:prime-subsequence-not-khinchin} shows that it is impossible to rearrange the set of prime numbers $\mathcal{P}$ to get an  
 \(L^p\)-Khinchin sequence. So, regarding the Khinchin property, the two sets $\mathcal{P}$ and $\mathbb{N}$ are radically different.

\section{Proofs of Theorem~\ref{cor:finite-generated-Lp},  Theorem \ref{cor:difference-set-L1} and Theorem \ref{cor:marstrand-nair}}
\label{sec:proofs-finite-generated}

\subsection{Sequences in $S(q_1, \cdots, q_d)$: Proof of Theorem~\ref{cor:finite-generated-Lp}}
\begin{proof}

Consider the
measure-preserving \(\mathbb N_0^d\)-action on \(\mathbb T\) defined by
\[
\alpha^{\mathbf m}x
:=q_1^{m_1}\cdots q_d^{m_d}x\pmod 1,
\qquad \mathbf m=(m_1,\ldots,m_d)\in\mathbb N_0^d.
\]
This action is free. Indeed, if \(\mathbf m\ne\mathbf n\), the pairwise
coprimality of \(q_1,\ldots,q_d\) implies
\(q_1^{m_1}\cdots q_d^{m_d}\ne q_1^{n_1}\cdots q_d^{n_d}\). So, the
set of solutions of the equation \(\alpha^{\mathbf m}x=\alpha^{\mathbf n}x\) is finite, hence of zero Lebesgue measure.
We have thus proved that the action  is free.

Put \(\mathbf n_k:=\phi(\lambda_k)\in\mathbb N_0^d\) for $k\ge1$. {For every $N\ge1$, define a finitely supported weight} on
\(\mathbb N_0^d\) by
\[
\nu_N:=\frac1N\sum_{k=1}^N\delta_{\mathbf n_k}.
\]
The averaging operator along $(\mathbf{n}_k)_{k\ge 1}$ is nothing but  our Khinchin operator:
$$
A_N^\alpha f(x)
:=\frac1N\sum_{k=1}^N f(\alpha^{\mathbf n_k}x)
=\frac1N\sum_{k=1}^N f(\lambda_kx)
=\Lambda_Nf(x).
$$
We also consider the group translation $\tau_G$. Its averaging
operator along $(\mathbf n_k)_{k\ge1}$ is
$$
A_N^{\tau_G}g(\mathbf z)
:=\frac1N\sum_{k=1}^N
g\bigl(\mathbf z+ \mathbf{n}_k\bigr).
$$
{By Theorem~\ref{thm:finite-transfer}, the weak $(p,p)$ maximal inequalities for $A_J^*$ and $A_J^{\tau_G,*}$ are equivalent, with the same constant $C>0$. Hence, if one inequality holds for every $J$, then so does the other; letting $J\to\infty$ gives the corresponding equivalence for $A^*$ and $A^{\tau_G,*}$.}

Suppose first that \(\Lambda\) is \(L^p\)-Khinchin. By
Theorem~\ref{thm:Lp-khinchin-maximal}, there is \(C>0\) such that
\[
{\mathfrak m}\left\{x\in\mathbb T:
\sup_{N\ge1}|\Lambda_Nf(x)|>t\right\}
\le C t^{-p}\|f\|_{L^p(\mathbb T)}^p.
\]
In particular, the same estimate holds with the supremum restricted
to \(1\le N\le J\). Theorem~\ref{thm:finite-transfer} therefore gives
\[
\#\left\{\mathbf z\in\mathbb Z^d:
\max_{1\le N\le J}
\left|\frac1N\sum_{k=1}^N
g\bigl(\mathbf z+\phi(\lambda_k)\bigr)\right|>t\right\}
\le C t^{-p}\|g\|_{\ell^p(\mathbb Z^d)}^p.
\]
Letting \(J\to\infty\) proves the discrete inequality in the
statement, with the same constant \(C\).

Conversely, assume that the discrete inequality in the statement
holds with constant \(C\). Its restriction to \(1\le N\le J\) has
the same constant. Theorem~\ref{thm:finite-transfer} then gives
\[
\mathfrak m\left\{x\in\mathbb T:
\max_{1\le N\le J}|\Lambda_Nf(x)|>t\right\}
\le C t^{-p}\|f\|_{L^p(\mathbb T)}^p.
\]
Letting \(J\to\infty\), we obtain the weak \((p,p)\) maximal
inequality for \(\Lambda^*\). By
Theorem~\ref{thm:Lp-khinchin-maximal}, \(\Lambda\) is
\(L^p\)-Khinchin. Again, the constant \(C\) is unchanged.
\end{proof}

\subsection{Condition of Tempelman type: Proof of Theorem \ref{cor:difference-set-L1}}

 By Theorem~\ref{cor:finite-generated-Lp}, it suffices for us to establish a weak $(1,1)$ maximal inequality for the operators
\[
A_n g(z) := \frac1n\sum_{k=1}^n g\bigl(z+\phi(\lambda_k)\bigr), \quad g\in \ell^1(\mathbb{Z}^d).
\]
Let \(A^*g(z):=\sup_{n\ge1}|A_ng(z)|\).
Assume that  \(g:\mathbb Z^d\to\mathbb C\)  is  finitely supported and
\(t>0\).  Set
\[
E:=\{z\in\mathbb Z^d:A^*g(z)>t\}.
\]

We first show that \(E\) is a finite set.
Fix \(z\in E\) and let \(n(z)\) be the smallest positive integer such
that
\(
 |A_{n(z)} g(z)| >t.
\)
It follows that 
\[
n(z)<
\frac1t\|g\|_{\ell^1(\mathbb Z^d)}.
\]
Thus we get the following cover of $E$: 
\begin{align*}
  E \subseteq \bigcup_{1 \le n < \frac1t\|g\|_{\ell^1(\mathbb Z^d)} } \{ z \in \mathbb{Z}^d: |A_n g(z)| >t\}.
\end{align*}
Since $g$ is finitely supported, each covering set on the right-hand side is finite and
hence $E$ is finite.

Now we estimate the number $\#E$. Recall that $I_n=\{\phi(\lambda_k): 1\le k\le n\}$ and $\phi$ is injective so that $\#I_n = n$. Put
\[
T(z):=z+I_{n(z)}.
\]
Then for $z\in E$, by the definition of $n(z)$,  
\begin{align}\label{eq:Tz-estimate}
\#T(z)=\# I_{n(z)}
<
\frac1t
\left|
\sum_{u\in T(z)}g(u)
\right|.
\end{align}
We now construct  a cover of $E$ as follows. Set
\(G_1:=E\) and choose \(z_1\in G_1\) such that
\[
n(z_1)=\max_{z\in G_1}n(z).
\]
Having chosen \(z_1,\ldots,z_j\), define
\[
G_{j+1}
:=
G_j\setminus
\bigl(z_j+I_{n(z_j)}-I_{n(z_j)}\bigr).
\]
If \(G_{j+1}\neq\varnothing\), choose \(z_{j+1}\in G_{j+1}\) such that
\[
n(z_{j+1})=\max_{z\in G_{j+1}}n(z).
\]
Since \(E\) is finite, this process stops after finitely many steps. We
obtain points \(z_1,\ldots,z_K\in E\) such that $E$ is covered in the {following} way:
\begin{align}\label{eq:E-cover}
  E
\subseteq
\bigcup_{j=1}^K
\bigl(z_j+I_{n(z_j)}-I_{n(z_j)}\bigr).
\end{align}

We claim that the sets \(T(z_1),\ldots,T(z_K)\) are pairwise disjoint.
Indeed, suppose \(i<j\). Since \(z_j\in G_i\) and \(z_i\) was chosen to
maximize \(n(\cdot)\) on \(G_i\), we have
\[
n(z_j)\le n(z_i).
\]
Hence
\[
I_{n(z_j)}\subseteq I_{n(z_i)}.
\]
If \(T(z_i)\cap T(z_j)\neq\varnothing\), then there exist
\(a\in I_{n(z_i)}\) and \(b\in I_{n(z_j)}\subseteq I_{n(z_i)}\) such that
\[
z_i+a=z_j+b.
\]
Therefore
\[
z_j=z_i+a-b
\in z_i+I_{n(z_i)}-I_{n(z_i)}.
\]
But \(z_j\) was not removed when passing from \(G_i\) to \(G_{i+1}\), a
contradiction. Thus the sets \(T(z_j)\) are pairwise disjoint.

By \eqref{eq:E-cover} and the assumption $\#(I_n- I_n) \le C \# I_n$,
\[
\#E
\le
\sum_{j=1}^K
\#\bigl(I_{n(z_j)}-I_{n(z_j)}\bigr)
\le
C\sum_{j=1}^K \# I_{n(z_j)}
=
C\sum_{j=1}^K \#T(z_j).
\]
Using \eqref{eq:Tz-estimate}, we obtain
\[
\#E
<
\frac{C}{t}
\sum_{j=1}^K
\left|
\sum_{u\in T(z_j)}g(u)
\right|.
\]
Since the sets \(T(z_j)\) are pairwise disjoint,
\[
\sum_{j=1}^K
\left|
\sum_{u\in T(z_j)}g(u)
\right|
\le
\sum_{j=1}^K
\sum_{u\in T(z_j)}|g(u)|
\le
\|g\|_{\ell^1(\mathbb Z^d)}.
\]
Therefore
\[
\#\{z\in\mathbb Z^d:A^*g(z)>t\}
\le
\frac{C}{t}\|g\|_{\ell^1(\mathbb Z^d)}.
\]
This is precisely the weak \((1,1)\)  maximal inequality in
Theorem~\ref{cor:finite-generated-Lp}. Hence \(\Lambda\) is an
\(L^1\)-Khinchin sequence.

\subsection{Marstrand--Nair theorem: Proof of Theorem \ref{cor:marstrand-nair}}

We prove the Marstrand--Nair theorem as a corollary of Theorem \ref{cor:difference-set-L1}.
Let \(\Lambda=(\lambda_n)_{n\ge1}\) be the increasing enumeration of
\(S(q_1,\ldots,q_d)\). For \(R\ge1\), put
\[
P_R:=\phi\bigl(S(q_1,\ldots,q_d)\cap[1,R]\bigr) \subset \mathbb{N}_0^d.
\]
Then
\[
P_R=
\left\{(n_1,\ldots,n_d)\in\mathbb N_0^d:
 n_1\log q_1+\cdots+n_d\log q_d\le \log R
\right\}.
\]
The set $P_R$ is a set of lattice points in a simplex in the Euclidean space $\mathbb{R}^d$. The number $\#P_R$ is controlled by the volume of the simplex:
\[
\#P_R\asymp
\frac{(\log R)^d}{\log q_1\cdots\log q_d}.
\]
Moreover, the difference $P_R-P_R$ is contained in a hypercube:
\[
P_R-P_R\subseteq
\prod_{j=1}^d
\left[-\frac{\log R}{\log q_j},\frac{\log R}{\log q_j}\right].
\]
Hence
\[
\#(P_R-P_R) = O((\log R)^d)=O(\#P_R).
\]
Since \(I_n=P_{\lambda_n}\), the condition in Theorem
\ref{cor:difference-set-L1} has been checked. 

\section{Multiplicative sequences}
\label{sec:multiplicative-symbolic}

We now apply the discrete criterion to multiplicative sequences generated by
symbolic dynamics.  The advantage of this viewpoint is that the exponent vector
\(\phi(\lambda_n)\) becomes the position of a path in \(\mathbb N_0^d\).  Thus
word-combinatorial properties of the symbolic sequence, such as balance or
long gaps, become geometric properties of the path, and these properties can be
inserted directly into the discrete maximal criterion.

Fix \(d\ge2\) and pairwise coprime integers \(q_1,\ldots,q_d\ge2\). For
\(u=(u_n)_{n\ge1}\in\{1,2,\ldots,d\}^{\mathbb N}\), define
\begin{equation}\label{multiplicative-sequence}
\Lambda(u)=(\lambda_n)_{n\ge1},
\qquad
\lambda_n:=q_{u_1}q_{u_2}\cdots q_{u_n}.
\end{equation}
Recall the semigroup homomorphism
\[
\phi:S(q_1,\ldots,q_d)\longrightarrow\mathbb N_0^d,
\qquad
\phi(q_1^{m_1}\cdots q_d^{m_d})=(m_1,\ldots,m_d),
\]
and write
\[
\boldsymbol\lambda_n:=\phi(\lambda_n)
=
\sum_{k=1}^n e_{u_k}\in\mathbb N_0^d,
\qquad n\ge1,
\]
where \(e_1,\ldots,e_d\) are the standard basis vectors of
\(\mathbb Z^d\). We also set \(\boldsymbol\lambda_0:=0\).

\subsection{Multiplicative balanced sequences are Khinchin sequences}

For a symbolic sequence \(u\in\{1,2,\ldots,d\}^{\mathbb N}\), let
\(\mathscr L_n(u)\) be the set of all length-\(n\) words occurring in
\(u\). Following \cite{Adam03},
the balance function  of \(u\) is
\[
B_u(n)
:=
\max_{1\le a\le d}\ \max_{w,w'\in\mathscr L_n(u)}
\bigl||w|_a-|w'|_a\bigr|,
\]
where \(|w|_a\) denotes the number of occurrences of the letter \(a\) in
\(w\).  The  symbolic sequence \(u\) is said to be \emph{$C$-balanced} if \(B_u(n)\le C\) for every
\(n\ge1\). We say that \(u\) is \emph{balanced} if it is \(C\)-balanced for some
\(C>0\).

\begin{proposition}\label{prop:balanced-L1}
Let \(u=(u_n)_{n\ge1}\in\{1,2,\ldots,d\}^{\mathbb N}\) be
\(C\)-balanced. Then the multiplicative sequence
\(\Lambda(u)=(\lambda_n)_{n\ge1}\) defined by
\eqref{multiplicative-sequence} is an \(L^1\)-Khinchin sequence.
\end{proposition}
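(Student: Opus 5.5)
We will deduce the proposition from Theorem~\ref{cor:difference-set-L1}. The sequence $\Lambda(u)$ lies in $S(q_1,\ldots,q_d)$, and its terms are distinct because $\boldsymbol\lambda_n$ has coordinate sum $n$, so $\phi$ is injective on it. With $I_n=\{\boldsymbol\lambda_k:1\le k\le n\}$ we have $\#I_n=n$. It therefore suffices to find a constant $C'$, depending only on $C$ and $d$, with $\#(I_n-I_n)\le C' n$ for all $n$.

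The key geometric input is that a $C$-balanced path stays in a tube of bounded width around a line. For $0\le j<k$, the difference $\boldsymbol\lambda_k-\boldsymbol\lambda_j=\sum_{i=j+1}^k e_{u_i}$ is the letter-count vector (Parikh vector) of the factor $u_{j+1}\cdots u_k\in\mathscr L_{k-j}(u)$. Fix for each length $m\ge1$ a reference vector $\mathbf v_m:=\boldsymbol\lambda_m=\boldsymbol\lambda_m-\boldsymbol\lambda_0$, the Parikh vector of the prefix of length $m$. Balance gives
\[
\bigl|(\boldsymbol\lambda_k-\boldsymbol\lambda_j)_a-(\mathbf v_{k-j})_a\bigr|\le C\qquad(1\le a\le d).
\]
Hence every element of $I_n-I_n$ with nonnegative ``length'' $m=k-j\in\{0,1,\ldots,n-1\}$ lies in the box $\mathbf v_m+[-C,C]^d$. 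That box contains at most $(2\lfloor C\rfloor+1)^d$ lattice points. Differences with $k<j$ are the negatives of these, so they lie in $-\mathbf v_m+[-C,C]^d$.

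Counting over $m$ gives
\[
\#(I_n-I_n)\le 2n(2C+1)^d\le 2(2C+1)^d\,\#I_n .
\]
This is condition \eqref{eq:I-I}, and Theorem~\ref{cor:difference-set-L1} then yields the $L^1$-Khinchin property.

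The only point needing care is the index convention. $I_n$ starts at $k=1$, not $k=0$, but this does not matter, since differences $\boldsymbol\lambda_k-\boldsymbol\lambda_j$ with $j,k\ge1$ are still Parikh vectors of factors of $u$. The balance hypothesis compares factors of the same length, which is exactly what is used. There is no real obstacle: the whole proof is this counting, and balance turns the potentially quadratic set $I_n-I_n$ into a union of $O(n)$ boxes of bounded size.
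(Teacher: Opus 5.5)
Your proposal is correct and follows essentially the same route as the paper. Both arguments reduce to the Tempelman-type condition of Theorem~\ref{cor:difference-set-L1}, use balance to place all differences $\boldsymbol\lambda_{m+p}-\boldsymbol\lambda_m$ of a fixed lag $p$ in an $\ell^\infty$-box of radius $C$ around $\boldsymbol\lambda_p$, and sum over the $O(n)$ lags and both signs; the only difference is that the paper isolates $\{0\}$ to get $1+2(n-1)(2C+1)^d$, whereas your count gives the slightly cruder but equally sufficient $2n(2C+1)^d$.
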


\begin{proof}
For \(r\ge1\), put
\(
I_r:=\{\boldsymbol\lambda_1,\ldots,\boldsymbol\lambda_r\}.
\)
By Theorem~\ref{cor:difference-set-L1}, it suffices to prove
\[
\#(I_r-I_r)=O(\#I_r).
\]
Assume \(m<n\). For a fixed letter  $1\le i\le  d$, denote the number of $i$'s appearing in the word $u_{m+1}\cdots u_n$ by
\[
N_{m,n}(i):=\#\{m+1\le j\le n:u_j=i\}.
\]
Clearly these numbers appear as  the components of the vector
\[
\boldsymbol\lambda_n-\boldsymbol\lambda_m
=
\sum_{j=m+1}^n e_{u_j}
=
\bigl(N_{m,n}(1),\ldots,N_{m,n}(d)\bigr).
\]
That \(u\) is \(C\)-balanced means that for  every
\(m,m'\ge0\),
\[
\forall p\ge 1, \quad \max_{1\le i \le d} \left|N_{m,m+p}(i)-N_{m',m'+p}(i)\right|
\le C.
\]
Therefore, for each fixed $p\ge1$, all vectors $\boldsymbol\lambda_{m+p}-\boldsymbol\lambda_m$ lie in the $\ell^\infty$-ball of radius $C$ centered at  $\boldsymbol\lambda_p-\boldsymbol\lambda_0$.

Since these vectors have integral coordinates,
\begin{equation}
\forall p\ge 1, \quad \#\{\boldsymbol\lambda_{m+p}-\boldsymbol\lambda_m:m\ge0\}
\le
(2C+1)^d .
\label{eq:balanced-block-bound}
\end{equation}

Now write
\[
I_r-I_r=\{0\}\cup\Omega_r\cup(-\Omega_r),
\]
where
\[
\Omega_r:=\{\boldsymbol\lambda_n-\boldsymbol\lambda_m:1\le m<n\le r\}
\]
and \(\Omega_r\) is covered as follows
\[
\Omega_r=\bigcup_{p=1}^{r-1}\Omega_r(p), \quad {\rm where} \ \
\Omega_r(p):=
\{\boldsymbol\lambda_{m+p}-\boldsymbol\lambda_m:1\le m\le r-p\}.
\]
By \eqref{eq:balanced-block-bound},
\(
\#\Omega_r(p)\le(2C+1)^d
\) for
$1\le p\le r-1$. 
Hence
\[
\#\Omega_r
\le
\sum_{p=1}^{r-1}\#\Omega_r(p)
\le
(r-1)(2C+1)^d.
\]
Since \(\#I_r=r\), we obtain
\[
\#(I_r-I_r)
\le
1+2\#\Omega_r
\le
1+2(r-1)(2C+1)^d
= O(\#I_r).
\]
\end{proof}

Substitutive symbolic sequences of Pisot type are balanced; see \cite{Adam03} for
example. This includes the Thue--Morse, Fibonacci, and
Tribonacci substitutive sequences.

\subsection{Bernoulli multiplicative sequences are not $L^p$-Khinchin: Proof of Theorem~\ref{thm:random-symbolic-non-Linfty}}
We will prove this theorem, using the characterization of $L^p$-Khinchin sequences by a weak \((p,p)\) maximal inequality, the transference principle for Khinchin operators, the general transference principle for $\mathbb N_0^d$-actions, and a result due to Lacey et al. \cite[Theorem~4 and the subsequent remarks]{Lacey1994}.

We will compare two actions, the $\mathbb N_0^d$-action associated with the Khinchin problem and the translation on $\mathbb Z^d$.
Fix $u=(u_k)\in \{1,2,\cdots, d\}^\mathbb{N}$. Denote
$$
 ^u\!A_Nf(x) := \frac{1}{N}\sum_{n=1}^N f(q_{u_1}q_{u_2}\cdots q_{u_n} x), \quad f\in L^p(\mathbb T),
$$
and
$$
 ^u\!A_N^{\tau_G}g(\mathbf y) := \frac{1}{N}\sum_{n=1}^N g\left(\mathbf y + \boldsymbol\lambda_n(u)\right), \quad g\in\ell^p(\mathbb Z^d),
$$
where $\boldsymbol\lambda_n(u)= \sum_{j=1}^n e_{u_j} $ is the lift of $q_{u_1}q_{u_2}\cdots q_{u_n} $. According to our notational convention, $^u\!A_N$ (resp. $ ^u\!A_N^{\tau_G}$) is the averaging operator along $(\boldsymbol\lambda_n(u))$ for the first action (resp. the second action).
By Theorem \ref{thm:finite-transfer}, both $(^u\!A_N)_{N\ge1}$ and $(^u\!A_N^{\tau_G})_{N\ge1}$ satisfy a weak $(p,p)$ maximal inequality, at the same time, with a same constant $C>0$.

Before proving  Theorem \ref{thm:random-symbolic-non-Linfty}, we present  the following useful lemma, which shows that projections preserve the weak $(p, p)$ maximal inequality.
 
\begin{lemma}[Weak $(p,p)$ maximal inequality under projection]
\label{lem:quotient-weak-pp}
Let \(1\le p<\infty\), let
\(\pi:\mathbb Z^d\to\mathbb Z^r\) be a surjective homomorphism, and let
\((\mathbf n_k)_{k\ge1}\) be a sequence in \(\mathbb Z^d\). Suppose
that, for every finitely supported
\(f:\mathbb Z^d\to\mathbb C\) and every \(t>0\),
\begin{equation}\label{eq:Zd}
\#\left\{\mathbf x\in\mathbb Z^d:
\sup_{N\ge1}\left|\frac1N\sum_{k=1}^N
f(\mathbf x+\mathbf n_k)\right|>t\right\}
\le C\cdot t^{-p}\cdot\|f\|_{\ell^p(\mathbb Z^d)}^p.
\end{equation}
Then, for every finitely supported
\(g:\mathbb Z^r\to\mathbb C\) and every \(t>0\),
\begin{equation}\label{eq:Zr}
\#\left\{\mathbf y\in\mathbb Z^r:
\sup_{N\ge1}\left|\frac1N\sum_{k=1}^N
g\bigl(\mathbf y+\pi(\mathbf n_k)\bigr)\right|>t\right\}
\le C\cdot t^{-p}\cdot\|g\|_{\ell^p(\mathbb Z^r)}^p.
\end{equation}
\end{lemma}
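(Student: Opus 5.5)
The plan is to pull $g$ back to $\mathbb Z^d$ and use a Calder\'on/Fubini-type averaging over the kernel of $\pi$. Write $K:=\ker\pi$, a subgroup of $\mathbb Z^d$ of rank $d-r$. Since $\pi$ is surjective onto the free group $\mathbb Z^r$, the sequence $0\to K\to\mathbb Z^d\to\mathbb Z^r\to0$ splits. So after a change of basis of $\mathbb Z^d$ (an automorphism $\sigma$ of $\mathbb Z^d$, which preserves counting measure and $\ell^p$ norms) we may assume $\mathbb Z^d=\mathbb Z^r\times\mathbb Z^{d-r}$ with $\pi(\mathbf y,\mathbf w)=\mathbf y$. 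Hypothesis \eqref{eq:Zd} is invariant under such a $\sigma$ if we replace $\mathbf n_k$ by $\sigma(\mathbf n_k)$, with the same constant $C$. We write $\mathbf n_k=(\pi(\mathbf n_k),\mathbf w_k)$.

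Fix a finitely supported $g$ on $\mathbb Z^r$, $t>0$, and a truncation level $J$. It suffices to prove \eqref{eq:Zr} with $\sup_{N\ge1}$ replaced by $\max_{1\le N\le J}$ and then let $J\to\infty$ by monotone convergence. Put $W:=\max_{k\le J}\|\mathbf w_k\|_\infty$ and $Q_R:=\{-R,\dots,R\}^{d-r}$. Define
\[
f_R(\mathbf y,\mathbf w):=g(\mathbf y)\one_{Q_{R+W}}(\mathbf w).
\]
For $\mathbf w\in Q_R$ and $k\le J$ we have $f_R(\mathbf y+\pi(\mathbf n_k),\mathbf w+\mathbf w_k)=g(\mathbf y+\pi(\mathbf n_k))$. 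Hence
\[
\max_{N\le J}\Bigl|\frac1N\sum_{k\le N}f_R\bigl((\mathbf y,\mathbf w)+\mathbf n_k\bigr)\Bigr|=\max_{N\le J}\Bigl|\frac1N\sum_{k\le N}g\bigl(\mathbf y+\pi(\mathbf n_k)\bigr)\Bigr|
\]
on $\mathbb Z^r\times Q_R$.

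Let $E$ be the exceptional set in \eqref{eq:Zr} for the truncated maximal function. The set $E\times Q_R$ is then contained in the exceptional set for $f_R$ in \eqref{eq:Zd}, so
\[
\#E\cdot\#Q_R\le C t^{-p}\|f_R\|_p^p=C t^{-p}\|g\|_{\ell^p(\mathbb Z^r)}^p\,\#Q_{R+W}.
\]
Dividing by $\#Q_R$ and letting $R\to\infty$ gives $\#Q_{R+W}/\#Q_R\to1$, so $\#E\le Ct^{-p}\|g\|_p^p$. Letting $J\to\infty$ then finishes the proof.

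The only step needing care is the splitting of $\pi$, which identifies the fibres of $\pi$ with translates of a lattice $\mathbb Z^{d-r}$ carrying F{\o}lner boxes. This holds because $\mathbb Z^r$ is free: choose preimages of a basis to define a section. Everything else is the same Fubini argument as in Step~2 of Theorem~\ref{thm:finite-transfer}. If $r=d$, then $\pi$ is an isomorphism and the statement is immediate.
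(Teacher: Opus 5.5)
Your proposal is correct and follows essentially the same argument as the paper. You split $\mathbb Z^d\cong\mathbb Z^r\oplus\ker\pi$, lift $g$ to $g(\mathbf y)\one_{\mathrm{box}}(\mathbf h)$, compare the truncated maximal functions on a slightly smaller box, and let the box grow before sending $J\to\infty$. The only difference is bookkeeping: you enlarge the support to $Q_{R+W}$ and test on $Q_R$, while the paper keeps the support $Q_L$ and tests on $Q'_L=\bigcap_{k\le J}(Q_L-\mathbf h_k)$, and both reduce to the same F{\o}lner ratio tending to $1$.
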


\begin{proof}
Since $\pi : \mathbb{Z}^d \to \mathbb{Z}^r$ is a surjective homomorphism of abelian groups and 
since $\mathbb{Z}^r$ is a free (hence projective) $\mathbb{Z}$-module, the induced short exact sequence
\[
0 \longrightarrow \ker(\pi) \longrightarrow \mathbb{Z}^d \xrightarrow{\pi} \mathbb{Z}^r \longrightarrow 0
\]
splits. Consequently, there exists an isomorphism
\[
\mathbb{Z}^d \;\cong\; \mathbb{Z}^r \oplus \ker(\pi)
\]
under which $\pi$ acts as the canonical projection onto the first factor, i.e.\ $\pi(\mathbf{y}, \mathbf{h}) = \mathbf{y}$.
So, every $\mathbf n$ can be identified with \((\pi(\mathbf n),\mathbf h)\), where 
$\mathbf h\in\ker\pi$ is uniquely determined. In particular, write
\[
\mathbf n_k=(\pi(\mathbf n_k),\mathbf h_k),
\qquad \mathbf h_k\in\ker\pi.
\]

Fix a finitely supported \(g:\mathbb Z^r\to\mathbb C\), a positive number
\(t>0\) and an integer \(J\ge1\). Let
\[
E_J:=\left\{\mathbf y\in\mathbb Z^r:
\max_{1\le N\le J}\left|
\frac1N\sum_{k=1}^N
g\bigl(\mathbf y+\pi(\mathbf n_k)\bigr)\right|>t\right\}.
\]
Choose a basis of \(\ker \pi\), let \(Q_L\) be the box of side length
\(2L+1\) in this basis, and put
\[
Q'_L = \bigcap_{k=1}^J (Q_L- \mathbf{h}_k). 
\]
Since there are only finitely many translations of $Q_L$ involved in $Q'_L$  for fixed \(J\), we have
\[
\frac{\#Q_L'}{\#Q_L}\longrightarrow1
\qquad\text{as }L\to\infty.
\]

Lift \(g\) to a finitely supported function \(f_L\) on
\(\mathbb Z^d\) by setting
\[
f_L(\mathbf y,\mathbf h)
:=g(\mathbf y)\one_{Q_L}(\mathbf h),
\qquad \mathbf y\in\mathbb Z^r,\quad\mathbf h\in\ker \pi.
\]
\[
\|f_L\|_{\ell^p(\mathbb Z^d)}^p
=\#Q_L\cdot\|g\|_{\ell^p(\mathbb Z^r)}^p.
\]
For \(\mathbf h\in Q'_L\) and for \(1\le N\le J\), 
\begin{align*}
\frac1N\sum_{k=1}^N
f_L\bigl((\mathbf y,\mathbf h)+\mathbf n_k\bigr)
&=\frac1N\sum_{k=1}^N
f_L\bigl(\mathbf y+\pi(\mathbf n_k),
          \mathbf h+\mathbf h_k\bigr)
=\frac1N\sum_{k=1}^N
g\bigl(\mathbf y+\pi(\mathbf n_k)\bigr).
\end{align*}
Consequently,
\[
E_J\times Q'_L
\subseteq
\left\{\mathbf x\in\mathbb Z^d:
\sup_{N\ge1}\left|\frac1N\sum_{k=1}^N
f_L(\mathbf x+\mathbf n_k)\right|>t\right\}.
\]
Applying the assumed weak \((p,p)\) inequality to \(f_L\) gives
\[
\#E_J\cdot\#Q'_L
\le C\cdot t^{-p}\cdot\#Q_L\cdot
\|g\|_{\ell^p(\mathbb Z^r)}^p.
\]
Divide by \(\#Q'_L\) and let \(L\to\infty\). We obtain
\[
\#E_J
\le C\cdot t^{-p}\cdot\|g\|_{\ell^p(\mathbb Z^r)}^p,
\]
uniformly in \(J\). 
Letting \(J\to\infty\) finishes the proof. \end{proof}

Now we are ready to prove Theorem \ref{thm:random-symbolic-non-Linfty}.

\begin{proof}[Proof of Theorem \ref{thm:random-symbolic-non-Linfty}]
We present a proof by contradiction in three steps. Suppose that there exists an event $F\subset \{1,2, \cdots, d\}^\mathbb{N}$ with probability $\mathbb{P}(F)>0$
such that $\Lambda(u)$ is an \(L^p\)-Khinchin sequence  for every $u\in F$. Then $(^u\!A_N)$ satisfies a weak $(p,p)$ maximal inequality, by Theorem \ref{thm:Lp-khinchin-maximal}.

{\bf Step 1.} {\em A weak $(p,p)$ inequality holds on $\mathbb{Z}^2$ for $u\in F$.}\ \  
By Theorem \ref{thm:Lp-main-criterion} and the above assumption, 
the translation operators $(^u\!A^{\tau_G}_N)$ also satisfy a weak $(p,p)$ maximal inequality for $u\in F$. That is to say, the inequality \eqref{eq:Zd} holds with
$\mathbf{n}_k = \boldsymbol\lambda_k(u)$. 
Then,  by  Lemma \ref{lem:quotient-weak-pp}, the inequality \eqref{eq:Zr} holds  with
$\mathbf{n}_k = \boldsymbol\lambda_k(u)$ and $r=2$ (we choose $r=2$).

{\bf Step 2.} {\em Construction of a random walk on $\mathbb{R}$.}\ \ 
Consider the surjective  homomorphism \(\pi:\mathbb Z^d\to\mathbb Z^2\) 
defined by
\[
\pi(e_1)=e_1,
\qquad
\pi(e_i)=e_2\quad(2\le i\le d).
\]
Here we also use $\{e_1, e_2\}$ to denote the canonical basis of the lattice $\mathbb{Z}^2$.  
Since the random variables $\pi(e_{u_k})$ are i.i.d., the projected path
\[
\pi(\boldsymbol\lambda_n)
=\sum_{k=1}^n\pi(e_{u_k})
\]
is a Bernoulli walk on $\mathbb{Z}^2$, one step $e_1$ with probability \(r_1\) and one step $e_2$ with probability
\(1-r_1\). We associate to it a Bernoulli walk on $\mathbb{R}$ with i.i.d. steps of zero mean:
$$
    w_n := \sum_{k=1}^n \xi_k \quad \text{with} \ \ \xi_k = \one_{\{1\}}(u_k) + a \one_{\{2,3, \cdots, d\}}(u_k) 
$$
where $a$ is chosen to be $a=-\frac{r_1}{1-r_1}$ so that $\mathbb{E}\xi_k =0$. 

{\bf Step 3.} {\em Contradiction.} \ \ 
For any measure-preserving flow \((U_t)_{t\in\mathbb R}\) on a probability space $(X, \mathcal{B}, \mu)$, define the
\(\mathbb Z^2\)-action on $(X, \mathcal{B}, \mu)$ by
\[
\beta^{(m,n)}:=U_{m+an}.
\]
Then, in particular,  \(\beta^{\pi(\boldsymbol\lambda_k)}=U_{w_k}\).
According to Lacey--Petersen--Wierdl--Rudolph
\cite[Theorem~4 and Remark~2]{Lacey1994},  for almost every such path $(w_n)=(w_n(u))$ and in particular for some $u\in F$
and every
\(\varepsilon>0\), there exists a measure-preserving probability flow
\((U_t)\) and a measurable set \(E\) with \(\mu(E)<\varepsilon\) such
that
\begin{equation}\label{eq:Lacey}
\mu\left\{x:
\sup_{N\ge1}\frac1N\sum_{n=1}^N
\one_E(U_{w_n}x)>\frac12\right\}=1.
\end{equation}
By Step 1,  for $u\in F$, the translation averages along \((\pi(\boldsymbol\lambda_k(u)))\) on
\(\mathbb Z^2\) satisfy a weak \((p,p)\) maximal inequality with
constant \(C\). Then by the transference principle (Theorem \ref{thm:finite-transfer}), a  weak \((p,p)\) maximal inequality with the same
constant \(C\) holds for the action $\beta$  along $(\pi(\boldsymbol\lambda_k))$. This, together with \eqref{eq:Lacey},  implies 
\[
1\le C\cdot 2^p\cdot\|\one_E\|_{L^p(\mu)}^p
=C\cdot2^p\cdot\mu(E),
\]
leading to a contradiction, if we have chosen  \(\varepsilon<(C\cdot2^p)^{-1}\).

Thus, for each fixed $p<\infty$, we have proved that almost surely $(\lambda_n(u))$ is a
non-$L^p$-Khinchin sequence. Then almost surely $(\lambda_n(u))$ is a
non-$L^p$-Khinchin sequence for every integer $p=1, 2, \cdots$. 
As  $L^q(\mathbb T)$ decreases as $q$ increases, we can conclude that 
almost surely $(\lambda_n(u))$ is a
non-$L^p$-Khinchin sequence for every real number $1\le p<\infty$. 
\end{proof}

\bibliographystyle{plain}

\bibliography{KP2.bib}

\end{document}